\numberwithin{equation}{section}
\numberwithin{figure}{section}
\theoremstyle{plain}
\newtheorem{thm}{\protect\theoremname}
  \theoremstyle{remark}
  \newtheorem{rem}[thm]{\protect\remarkname}
  \theoremstyle{plain}
  \newtheorem{lem}[thm]{\protect\lemmaname}
  \theoremstyle{definition}
  \newtheorem{defn}[thm]{\protect\definitionname}
  \theoremstyle{plain}
  \newtheorem{cor}[thm]{\protect\corollaryname}
  \theoremstyle{plain}
  \newtheorem{prop}[thm]{\protect\propositionname}
  \providecommand{\corollaryname}{Corollary}
  \providecommand{\definitionname}{Definition}
  \providecommand{\lemmaname}{Lemma}
  \providecommand{\propositionname}{Proposition}
  \providecommand{\remarkname}{Remark}
\providecommand{\theoremname}{Theorem}
\begin{document}

\title{{\normalsize{}Steep Points of Gaussian Free Fields in Any Dimension}}

\author{Linan Chen}
\begin{abstract}
This work aims to extend the existing results on the Hausdorff dimension
of the classical thick point sets of a Gaussian free field (GFF) to
a more general class of exceptional sets. We adopt the circle or sphere
averaging regularization to treat a singular GFF in any dimension,
and introduce the notion of ``$f-$steep point'' of the GFF for
certain test function $f$. Roughly speaking, the $f-$steep points
of a generic element of the GFF are locations where, when weighted
by the function $f$, the ``steepness'', or in other words, the
``rate of change'' of the regularized field element becomes unusually
large. Different choices of $f$ lead to the study of various exceptional
behaviors of the GFF. We investigate the Hausdorff dimension of the
set consisting of $f-$steep points, from which we can recover the
existing results on thick point sets for both log-correlated and polynomial-correlated
GFFs, and also obtain new results on exceptional sets that, to our
best knowledge, have not been previously studied. Our method is inspired
by the one used to study the thick point sets of the classical 2D
log-correlated GFF.
\end{abstract}

\selectlanguage{american}%

\address{Department of Mathematics and Statistics, McGill University, 805
Sherbrooke Street West, Montréal, QC, H3A 0B9, Canada. }

\email{Email: linan.chen@mcgill.ca}

\subjclass[2000]{\noindent 60G60, 60G15}

\keywords{Gaussian free field, steep point, thick point, exceptional set, Hausdorff
dimension}

\thanks{The author is partially supported by NSERC Discovery Grant G241023.}
\maketitle
\selectlanguage{english}%

\section{Introduction}

Gaussian Free Field (GFF) has played an essential role in many recent
achievements in quantum physics and statistical mechanics. Although
originated in physics, the mathematical study of GFFs has been a fast
developing field of probability theory, generating fruitful results
on problems arising from discrete math, analysis, geometry and other
subjects. Heuristically speaking, GFFs are analogues of the Brownian
motion with multidimensional time parameters. Just as the Brownian
motion can be viewed naturally as a random univariate function, GFFs
can be interpreted as random multivariate functions or generalized
functions. Also, just as the graph of the Brownian motion naturally
models a random curve, graphs of GFFs are considered as promising
candidates for modeling random surfaces or random manifolds, which
ultimately lead to the study of random geometry. On one hand, GFFs
have been applied to construct random geometric objects such as random
measures, for example, the Liouville Quantum Gravity measure which
we will mention briefly below. On the other hand, the study of geometric
properties of a GFF itself gives rise to many interesting problems,
most of which remain open to date. The main reason that such problems
are challenging, at least for a typical GFF concerned in our work,
is that a generic element of the GFF is only a tempered distribution
which may not be point-wisely defined, to which we refer as the \emph{singularity}
of the GFF. To tackle this kind of singularity, it is natural to consider
a GFF in the discrete setting, for example, on a discrete lattice,
in which case the GFF will be defined on every vertex. A rich literature
has been established on the geometry of discrete GFFs. For instance,
the distribution of extrema and near-extrema of a discrete GFF has
been extensively studied\emph{ }(e.g.,\emph{ }\cite{DingZeitouni,DingRoyZeitouni,ChatterjeeDemboDing}).
However, for a GFF in the continuum setting, the notion of ``extrema''
is not applicable due to the lack of point-wise values of the field.
To overcome this issue, one needs to apply a procedure, known as a
\emph{regularization} in physics literature, to approximate point-wise
values of the continuum GFF. Various regularization procedures have
long been considered in the study of related problems. Below we only
allude to two commonly used regularization procedures.

\textcolor{black}{The first one is based on the theory of }\textcolor{black}{\emph{Gaussian
Multiplicative Chaos}}\textcolor{black}{{} (GMC) introduced by Kahane
in his seminal work \cite{Kah}. The GMC theory enables one to define
in any dimension a random Borel measure which formally takes the form
``$e^{h\left(x\right)}dx$'', where $h$ is a generic element of
a log-correlated Gaussian random field, and $dx$ is the Lebesgue
measure. Such a measure, known as the }\textcolor{black}{\emph{Liouville
Quantum Gravity}}\textcolor{black}{{} (LQG) measure, is an important
object in quantum field theory. Kahane's work has led to the multi-fractal
analysis of the LQG measure by showing that such a measure is supported
on a Borel set where the regularized $h$ achieves ``unusually''
large values. Over the past decade, further results on the support
of the LQG measure and the geometry of log-correlated GFFs have been
established under the framework of GMC (e.g., \cite{BM,RV,RV11,RV13,JJRV,DSRV14}).
Besides, using the tool of GMC, the extreme values of the regularized
$h$ are also treated in \cite{Madaule}.}

Besides the GMC approach, one can also regularize a continuum GFF
by averaging the generic field element $h$ over some sufficiently
``nice'' Borel sets. Since convolution or integration is the natural
way to ``tame'' the singularity of a tempered distribution, such
an averaging procedure becomes a natural choice when it comes to the
study of the ``landscape'' of $h$. For example, a more recent breakthrough
in the study of quantum gravity was the work of Duplantier and Sheffield
(\cite{DS1}), which, based on the averages of $h$ over circles,
gave a rigorous construction of the LQG measure in 2D, and a rigorous
proof of the long celebrated \emph{Knizhnik-Polyakov-Zamolodchikov
formula}, in the context of linking the scaling dimension of the LQG
measure with that of the underlying Lebesgue measure. Along the way,
\cite{DS1} also derived the same property for the support of the
LQG measure as mentioned above, i.e., it is supported where the averaged
$h$ becomes unusually large. Meanwhile, also using circular averages
of $h$, Hu, Miller and Peres (\cite{HMP}) studied specifically the
points where the regularized $h$ is unusually large, introduced the
notion of ``thick point''\footnote{The term ``thick point'' is borrowed from the literature of stochastic
analysis. There it refers to the extremes of the occupation measure
of a stochastic process (see, e.g., \cite{DPRZ}). }, and determined the Hausdorff dimension of the set consisting of
thick points. Based on a sphere averaging regularization, some of
the results on the LQG measure were generalized to higher-even-dimensional
log-correlated GFFs by \cite{CJ}, and the study of thick points was
extended to four-dimensional log-correlated GFFs by \cite{CiprianiHazra13},
and then later to polynomial-correlated GFFs in any dimension by \cite{Chen_thick_point}. 

\subsection{A Brief Review of Thick Point}

Besides being the support of the LQG measure, thick point sets characterize
a basic aspect of the ``landscape'' of the GFFs, that is, where
the ``high peaks'' occur, so thick points are of importance to understanding
the geometry of the GFFs. The purpose of this article is to consolidate
the existing results on thick point sets for both log-correlated GFFs
and polynomial-correlated GFFs, and to extend our study to a more
general class of exceptional sets. We will begin with a brief (and
not exhaustive) review on what is known about thick point sets of
log-correlated or polynomial-correlated GFFs. 

\subsubsection{Thick Points of Log-Correlated GFFs }

Following the same notations as above, let $h$ be a generic element
of the GFF associated with the operator $\Delta$ on a bounded domain
$D\subseteq\mathbb{R}^{2}$ with the Dirichlet boundary condition.
Governed by the properties of the Green's function of $\Delta$ in
2D, such a GFF is log-correlated, and it is possible to make sense
of the circular average of $h$:
\[
\bar{h}_{t}\left(z\right):=\frac{1}{2\pi t}\int_{\partial B\left(z,t\right)}h\left(x\right)\sigma\left(dx\right)
\]
where $z\in D$, $\partial B\left(z,t\right)$ is the circle centered
at $z$ with radius $t$ and $\sigma\left(dx\right)$ is the length
measure along the circle. To get an approximation of ``$h\left(z\right)$'',
it is to our interest to study $\bar{h}_{t}\left(z\right)$ as $t\searrow0$.
For every $\gamma\geq0$, the set of \emph{$\gamma-$thick points}
of $h$ is defined in \cite{HMP} as\footnote{The definition of thick point presented here actually adopts a different
parametrization from the original version in \cite{HMP}.} 
\begin{equation}
T^{\gamma,h}:=\left\{ z\in D:\,\lim_{t\searrow0}\,\frac{\bar{h}_{t}\left(z\right)}{-\ln t}=\frac{\gamma}{\pi}\right\} .\label{eq:2D thick point}
\end{equation}
With $z$ fixed, the circular average process $\left\{ \bar{h}_{t}\left(z\right):\,t\in(0,1]\right\} $
has the same distribution as a Brownian motion $\left\{ B_{\tau}:\,\tau\geq0\right\} $
up to a deterministic time change $\tau=\tau\left(t\right)=\frac{-\ln t}{2\pi}$,
and as $t\searrow0$, $\bar{h}_{t}\left(z\right)$ behaves just like
$B_{\tau}$ as $\tau\nearrow\infty$. Then, for any given $z\in D$,
written in terms of the Brownian motion, the limit involved in (\ref{eq:2D thick point})
is equivalent to 
\[
\lim_{\tau\rightarrow\infty}\,\frac{B_{\tau}}{\tau}=2\gamma
\]
which occurs with probability zero for any $\gamma>0$. Therefore,
$\gamma-$thick points, so long as $\gamma>0$, are locations where
the field value is unusually large. The authors of \cite{HMP} prove
that, with probability one, if $\gamma>\sqrt{2\pi}$, then $T^{\gamma,h}=\emptyset$;
if $\gamma\in\left[0,\sqrt{2\pi}\right]$, then
\[
\dim_{\mathcal{H}}\left(T^{\gamma,h}\right)=2-\frac{\gamma^{2}}{\pi},
\]
 where ``$\dim_{\mathcal{H}}$'' refers to the Hausdorff dimension;
if $\gamma=0$, $z\in T^{\gamma,h}$ for almost every $z\in D$ under
the Lebesgue measure on $D$. 

\subsubsection{Thick Points of Polynomial-Correlated GFFs}

In $\mathbb{R}^{\nu}$ with $\nu\geq3$, if $\theta$ is a generic
element of the GFF associated with the operator\footnote{One can instead consider the GFF associated with $\Delta$ on a bounded
domain $D\subseteq\mathbb{R}^{\nu}$ equipped with the Dirichlet boundary
condition, and the same results as mentioned in this subsection will
hold. See Remark \ref{rem: about the choice of operator}.} $\left(I-\Delta\right)$ on $\mathbb{R}^{\nu}$, then $\theta$ is
more singular compared with the previous 2D log-correlated GFF element
$h$, because the Green's function in this case has a polynomial singularity
along the diagonal and the GFF is polynomial-correlated. Intuitively
speaking, compared with that of $h$, the graph of $\theta$ is ``rougher'',
and the higher the dimension $\nu$ is, the worse it becomes. But
no matter what the dimension is, it is always possible to average
$\theta$ over the codimension-1 sphere centered at any $x\in\mathbb{R}^{\nu}$
with radius $t>0$, and the spherical average, denoted by $\bar{\theta}_{t}\left(x\right)$,
approaches ``$\theta\left(x\right)$'' as $t\searrow0$ in the sense
of tempered distribution. In this setting, for $\gamma\geq0$, the
set of \emph{$\gamma-$thick points} of $\theta$ is defined in \cite{Chen_thick_point}
as 
\begin{equation}
T^{\gamma,\theta}:=\left\{ x\in\mathbb{R}^{\nu}:\,\limsup_{t\searrow0}\,\frac{\bar{\theta}_{t}\left(x\right)}{\sqrt{-G\left(t\right)\ln t}}\geq\sqrt{2\nu\gamma}\right\} \label{eq:thick point def}
\end{equation}
where $G\left(t\right):=\mathbb{E}\left[\left(\bar{\theta}_{t}\left(x\right)\right)^{2}\right]$
for every $t>0$. In a similar spirit as (\ref{eq:2D thick point}),
if $\gamma>0$, then a $\gamma-$thick point is a location where $\theta$
is unusually large. It is established in \cite{Chen_thick_point}
that, with probability one, if $\gamma>1$, then $T^{\gamma,\theta}=\emptyset$;
if $\gamma\in\left[0,1\right]$, then
\[
\dim_{\mathcal{H}}\left(T^{\gamma,\theta}\right)=\nu\left(1-\gamma\right).
\]
Clearly (\ref{eq:thick point def}) is not the most straightforward
analogue of (\ref{eq:2D thick point}), since ``$\limsup$'' is
considered instead of ``$\lim$'', but it turns out to be a more
suitable choice for the definition of thick point of the polynomial-correlated
GFF, because, with probability one, the ``perfect'' $\gamma-$thick
point, i.e., $x$ such that 
\[
\lim_{t\searrow0}\,\frac{\bar{\theta}_{t}\left(x\right)}{\sqrt{-G\left(t\right)\ln t}}=\sqrt{2\nu\gamma},
\]
does not exist. \cite{Chen_thick_point} also investigates the set
of \emph{sequential $\gamma-$thick points }given by 
\begin{equation}
ST^{\gamma,\theta}:=\left\{ x\in\mathbb{R}^{\nu}:\,\lim_{m\nearrow\infty}\,\frac{\bar{\theta}_{r_{m}}\left(x\right)}{\sqrt{-G\left(r_{m}\right)\ln r_{m}}}=\sqrt{2\nu\gamma}\right\} ,\label{eq:thick point along sequence}
\end{equation}
where $\left\{ r_{m}:m\geq1\right\} \subseteq(0,1]$ is a sequence
that $r_{m}\searrow0$ sufficiently fast as $m\nearrow\infty$, and
proves that, with probability one, if $\gamma>1$, then $ST^{\gamma,\theta}=\emptyset$;
if $\gamma\in\left[0,1\right]$, then
\[
\dim_{\mathcal{H}}\left(ST^{\gamma,\theta}\right)=\nu\left(1-\gamma\right).
\]

Compared with the case in the log-correlated setting, the higher-level
of singularity of $\theta$ makes its thick points ``rarer'' and
hence harder to find. In fact, the most involved part of the work
in \cite{Chen_thick_point} is to establish a lower bound for $\dim_{\mathcal{H}}\left(T^{\gamma,\theta}\right)$
and $\dim_{\mathcal{H}}\left(ST^{\gamma,\theta}\right)$. One would
expect that, for most problems related to the geometry of GFFs, it
is non-trivial to extend the study from the log-correlated setting
to the polynomial-correlated setting, due to the challenge posed by
the higher order of singularity in the latter case. 

\subsection{An Outline of the Article}

Generally speaking, in this article, instead of focusing on the regularized
GFF element ``$\bar{h}_{t}\left(z\right)$'' or ``$\bar{\theta}_{t}\left(x\right)$''
itself, we consider the integral of some test function $f\left(t\right)$,
integrated against the ``increment'' of the regularized GFF; instead
of focusing on how large the value of ``$\bar{h}_{t}\left(z\right)$''
or ``$\bar{\theta}_{t}\left(x\right)$'' becomes as $t\searrow0$,
we study how large the value of the concerned integral becomes when
$t$ is small, which reflects the ``steepness'' or the ``rate of
change'' of the regularized GFF with respect to $t$. Although setting
out to investigating a slightly different perspective of the ``landscape''
of the GFF, our work follows a similar general strategy as that in
\cite{HMP} and \cite{Chen_thick_point}. In $\mathsection2$, we
interpret GFFs in the framework of \emph{Abstract Wiener Space} and
adopt the regularization based on circular or spherical averages.
We also borrow, from the mentioned references, the results on the
continuity property of the regularized GFF to study the continuity
property of the integral of $f$ against the regularized GFF. In $\mathsection3$
we introduce the notion of ``$f-$steep point'' based on the considerations
above and carry out a careful analysis of the Hausdorff dimension
of the sets consisting of steep points. Below we give a brief description
of our main results.

In $\mathbb{R}^{\nu}$ with $\nu\geq2$, let $\left\{ \bar{\theta}_{t}\left(x\right):x\in\mathbb{R}^{\nu},t\in(0,1]\right\} $
be the regularized family based on circular or spherical averages
of $\theta$, same as introduced above, of the GFF associated with
$\left(I-\Delta\right)$ in $\mathbb{R}^{\nu}$, and let $f:(0,1]\rightarrow\mathbb{R}$
be a properly chosen test function (the requirements of $f$ will
be specified later). At any location $x\in\mathbb{R}^{\nu}$, we consider
a measurement of the steepness of $\bar{\theta}_{t}\left(x\right)$,
or the rate of change of $\bar{\theta}_{t}\left(x\right)$ with respect
to $t$, as given by the integral 
\[
X_{t}^{f,\theta}\left(x\right)=\int_{1}^{t}f\left(s\right)d\bar{\theta}_{s}\left(x\right),
\]
which, as we will show later, can be interpreted as a Riemann-Stieltjes
integral\footnote{For any $0<a<b\leq1$, ``$\int_{b}^{a}$'' refers to ``$-\int_{a}^{b}$''
in the sense of Riemann or Riemann-Stieltjes integral.}. Heuristically speaking, assuming $f$ is positive, the larger $X_{t}^{f,\theta}\left(x\right)$
gets as $t\searrow0$, the ``steeper'' $\bar{\theta}_{t}\left(x\right)$
is, or the faster $\bar{\theta}_{t}$$\left(x\right)$ changes with
respect to $t$, when weighted by $f$. Furthermore, if we define
\[
\Sigma_{t}^{f}:=\int_{1}^{t}f^{2}\left(s\right)dG\left(s\right)
\]
where $G\left(t\right):=\mathbb{E}\left[\left(\bar{\theta}_{t}\left(x\right)\right)^{2}\right]$
for every $t\in(0,1]$, then we can show that $\left\{ X_{t}^{f,\theta}\left(x\right):t\in(0,1]\right\} $
has the same distribution as a Brownian motion running by the ``clock''
$\Sigma_{t}^{f}$. Therefore, if $x\in\mathbb{R}^{\nu}$ is such that
\[
\lim_{t\searrow0}\,\frac{X_{t}^{f,\theta}\left(x\right)}{\Sigma_{t}^{f}}=\sqrt{2\nu},
\]
then $x$ is a location where $X_{t}^{f,\theta}\left(x\right)$ achieves
unusually large values and we will call $x$ an\emph{ $f-$steep point
}of $\theta$. Denote by $D^{f,\theta}$ the collection of all the
$f-$steep points of $\theta$. We study the Hausdorff dimension of
$D^{f,\theta}$ and find out that a key parameter is the limit range
of the ratio $\frac{\Sigma_{t}^{f}}{-\ln t}$ as $t\searrow0$. Namely,
if we set
\[
\bar{c}_{f}:=\limsup_{t\searrow0}\frac{\Sigma_{t}^{f}}{-\ln t}\text{ and }\underline{c}_{f}:=\liminf_{t\searrow0}\frac{\Sigma_{t}^{f}}{-\ln t}.
\]
then we prove (Theorem \ref{thm:main theorem hausdorff dimension})
that, with probability one, if $\bar{c}_{f}>1$, then $D^{f,\theta}=\emptyset$;
if $0<\underline{c}_{f}\leq\bar{c}_{f}\leq1$, then 
\[
\nu\left(1-2\bar{c}_{f}+\underline{c}_{f}\right)\leq\dim_{\mathcal{H}}\left(D^{f,\theta}\right)\leq\nu\left(1-\bar{c}_{f}\right);
\]
in particular, if $\bar{c}_{f}=\underline{c}_{f}=:c_{f}\in(0,1]$,
then 
\[
\dim_{\mathcal{H}}\left(D^{f,\theta}\right)=\nu\left(1-c_{f}\right).
\]
Besides, we also investigate some exceptional sets that are closely
related to $D^{f,\theta}$, including the set of the\emph{ super $f-$steep
points}, denoted by $D_{\limsup}^{f,\theta}$, consisting of $x$
such that
\[
\limsup_{t\searrow0}\,\frac{X_{t}^{f,\theta}\left(x\right)}{\Sigma_{t}^{f}}\geq\sqrt{2\nu},
\]
and the set of the \emph{sub $f-$steep points, }denoted by $D_{\liminf}^{f,\theta}$,
consisting of $x$ such that
\[
\liminf_{t\searrow0}\,\frac{X_{t}^{f,\theta}\left(x\right)}{\Sigma_{t}^{f}}\geq\sqrt{2\nu},
\]
as well as the set of the\emph{ sequential $f-$steep points, }denoted
by $SD^{f,\theta}$, consisting of $x$ such that
\[
\lim_{m\nearrow\infty}\,\frac{X_{r_{m}}^{f,\theta}\left(x\right)}{\Sigma_{r_{m}}^{f}}=\sqrt{2\nu},
\]
where $\left\{ r_{m}:m\geq1\right\} \subseteq(0,1]$ is a sequence
with $r_{m}\searrow0$ as $m\nearrow\infty$. We provide (Propositions
\ref{prop: upper bound on Hausdorff dim of limsup set} and \ref{prop:lower bound on D^f,theta})
upper bounds and lower bounds for the Hausdorff dimension of $D_{\limsup}^{f,\theta}$,
$D_{\liminf}^{f,\theta}$ and $SD^{f,\theta}$. 

We believe that analyzing steep points can be a useful approach in
studying the geometry of GFFs. In $\mathsection4$, by setting $f$
to be specific functions, we can apply the framework of $f-$steep
point to re-produce some of the existing results on thick points,
as reviewed in the previous subsection, for both log-correlated GFFs
and polynomial-correlated GFFs. Moreover, certain choices of $f$
lead to natural generalizations of thick point, to one of which we
refer as the \emph{oscillatory thick point}. Heuristically speaking,
an oscillatory thick point is a location $x\in\mathbb{R}^{\nu}$ where
the value of the regularized field element $\bar{\theta}_{t}\left(x\right)$
achieves unusually large values both in the positive direction and
in the negative direction as $t\searrow0$, i.e., an oscillatory behavior
with unusually large amplitude is exhibited by $\bar{\theta}_{t}\left(x\right)$
as $t\searrow0$. With the framework of steep point, we can determine
the exact Hausdorff dimension of the set of oscillatory thick points
for log-correlated GFFs (Proposition \ref{prop:oscil. log}), and
provide estimates for the Hausdorff dimension of the analogous exceptional
set for polynomial-correlated GFFs (Proposition \ref{prop:oscil. polyn}).
Besides, another generalization of thick point we will consider is
the\emph{ lasting thick point}, which, roughly speaking, is a thick
point where $\bar{\theta}_{t}\left(x\right)$ spends non-negligible
portion of the total time maintaining unusually large values. Again,
using the results on steep points, we establish (Proposition \ref{prop:lasting thick point})
bounds on the Hausdorff dimension of the set of lasting thick points,
showing that, although ``rarer'' than the standard thick points,
lasting thick points can still be ``detected''.

In $\mathsection5$ we briefly discuss some generalizations and problems
related to the notion of steep point, and possible directions in which
we would like to further our study. $\mathsection6$ is the Appendix,
in which we include the lengthy and technical proofs of some of the
results in $\mathsection2$ and $\mathsection3$, to avoid the tedious
and pedagogically unimportant computations from distracting readers,
and to minimize, in the main article, the overlapping with the arguments
used in \cite{HMP} and \cite{Chen_thick_point}. 

\section{Gaussian Free Fields and Circle/Sphere Averaging Regularization}

\subsection{Abstract Wiener Space and GFFs}

A general and mathematically accurate treatment of GFFs is provided
by the theory of \emph{Abstract Wiener Space} (AWS) (\foreignlanguage{american}{\cite{aws}}),
under whose framework not only can we define and construct GFFs rigorously,
we can also interpret any regularization procedure, as mentioned in
the introduction, in a natural way. The connection between AWS and
GFF is thoroughly explained in $\mathsection2$ of \cite{Chen_thick_point},
so in this section we will not repeat the entire theory but only review
main ideas for the sake of completeness. For readers who are interested
in the general theory of AWS, we refer to \foreignlanguage{american}{\cite{aws},
\cite{awsrevisited}, \cite{add_Gaus} and $\mathsection8$ of \cite{probability}}.
Same as in \cite{Chen_thick_point}, we define GFFs in a general setting.
Given $\nu\in\mathbb{N}$ and $p\in\mathbb{R}$, consider the Sobolev
space $H^{p}:=H^{p}\left(\mathbb{R}^{\nu}\right)$, which is the closure
of $C_{c}^{\infty}\left(\mathbb{R}^{\nu}\right)$, the space of $\mathbb{R}-$valued
compactly supported smooth functions on $\mathbb{R}^{\nu}$, under
the inner product given by, 
\[
\begin{split}\forall\phi,\psi\in C_{c}^{\infty}\left(\mathbb{R}^{\nu}\right),\quad\left(\phi,\,\psi\right)_{_{H^{p}}} & :=\left(\left(I-\Delta\right)^{p}\phi,\psi\right)_{L^{2}\left(\mathbb{R}^{\nu}\right)}\\
 & =\frac{1}{\left(2\pi\right)^{\nu}}\int_{\mathbb{R}^{\nu}}\left(1+\left|\xi\right|^{2}\right)^{p}\hat{\phi}\left(\xi\right)\overline{\hat{\psi}\left(\xi\right)}d\xi,
\end{split}
\]
where ``$\left(I-\Delta\right)^{p}$'' is the Bessel operator of
order $p$, and ``$\hat{\;\;}$'' refers to the Fourier transform.
$\left(H^{p},\,\left(\cdot,\cdot\right)_{H^{p}}\right)$ forms a separable
Hilbert space. One can identify $H^{-p}$ as the dual space of $H^{p}$,
and for every $\mu\in H^{-p}$, if $h_{\mu}:=\left(I-\Delta\right)^{-p}\mu$,
then $h_{\mu}\in H^{p}$. The theory of AWS guarantees that, there
exists a separable Banach space $\Theta^{p}:=\Theta^{p}\left(\mathbb{R}^{\nu}\right)$
with the Banach norm $\left\Vert \cdot\right\Vert _{\Theta^{p}}$,
and a centered Gaussian measure $\mathcal{W}^{p}:=\mathcal{W}^{p}\left(\mathbb{R}^{\nu}\right)$
on $\left(\Theta^{p},\mathcal{B}_{\Theta^{p}}\right)$ with $\mathcal{B}_{\Theta^{p}}$
being the Borel $\sigma-$algebra, such that \\
\\
(i) $\left(H^{p},\,\left(\cdot,\cdot\right)_{H^{p}}\right)$ is continuously
embedded in $\left(\Theta^{p},\left\Vert \cdot\right\Vert _{\Theta^{p}}\right)$
as a dense subspace, so $\Theta^{p}$ is also a space of $\mathbb{R}-$valued
functions or generalized functions;\\
\\
(ii) if $\lambda\in H^{-p}$ is a linear and bounded functional on
$\Theta^{p}$ with respect to the ``action'' $\left(\cdot,\cdot\right)_{L^{2}}$,
or in other words, $\lambda$ is an element of $\left(\Theta^{p}\right)^{*}$
the due space of $\Theta^{p}$, and $h_{\lambda}:=\left(I-\Delta\right)^{-p}\lambda$,
then the following mapping 
\[
\theta\in\Theta^{p}\rightsquigarrow\mathcal{I}\left(h_{\lambda}\right)\left(\theta\right):=\left(\theta,\lambda\right)_{L^{2}}\in\mathbb{R},
\]
as a random variable on $\left(\Theta^{p},\mathcal{B}_{\Theta^{p}},\mathcal{W}^{p}\right)$,
has the Gaussian distribution with $\mathbb{E}^{\mathcal{W}}\left[\mathcal{I}\left(h_{\lambda}\right)\right]=0$
and $\text{Var}\left(\mathcal{I}\left(h_{\lambda}\right)\right)=\left\Vert h_{\lambda}\right\Vert _{H^{p}}^{2}=\left\Vert \lambda\right\Vert _{H^{-p}}^{2}$.\\

In this setting, we refer to the probability space $\left(\Theta^{p},\mathcal{B}_{\Theta^{p}},\mathcal{W}^{p}\right)$
as the \emph{dim-$\nu$ order-$p$} \emph{GFF} \footnote{In physics literature, the term ``GFF'' only refers to the case
when $p=1$. Here we slightly extend the use of this term and continue
to use ``GFF'' when $p\neq1$.} and $\left(H^{p},\,\left(\cdot,\cdot\right)_{H^{p}}\right)$ is known
as the\emph{ Cameron-Martin space} associated with this GFF.\emph{
}Besides, (i) and (ii) also imply that the mapping 
\[
\mathcal{I}:\,h_{\lambda}\in H^{p}\mapsto\mathcal{I}\left(h_{\lambda}\right)\in L^{2}\left(\mathcal{W}^{p}\right)
\]
can be extended to the whole $H^{p}$ and gives rise to an isometry
$\mathcal{I}:\left(H^{p},\,\left(\cdot,\cdot\right)_{H^{p}}\right)\rightarrow L^{2}\left(\mathcal{W}^{p}\right)$,
and its image $\left\{ \mathcal{I}\left(h\right):h\in H^{p}\right\} $
forms a centered Gaussian family under $\mathcal{W}^{p}$ with the
covariance given by, 
\[
\forall h,g\in H^{p},\quad\mathbb{E}^{\mathcal{W}^{p}}\left[\mathcal{I}\left(h\right)\mathcal{I}\left(g\right)\right]=\left(h,g\right)_{H^{p}}.
\]
The isometry $\mathcal{I}$ is called the \emph{Paley-Wiener map }and
its images $\left\{ \mathcal{I}\left(h\right):h\in H^{p}\right\} $
are known as the \emph{Paley-Wiener integrals}. There are two facts
about the Paley-Wiener integrals that we will use in our later discussions.\\
\\
1. If $\left\{ h_{n}:n\geq1\right\} \subseteq H^{p}$ is an orthonormal
basis of $\left(H^{p},\,\left(\cdot,\cdot\right)_{H^{p}}\right)$,
then $\left\{ \mathcal{I}\left(h_{n}\right):n\geq1\right\} $, under
$\mathcal{W}^{p}$, is a family of independent standard Gaussian random
variables, and for $\mathcal{W}-$almost every $\theta\in\Theta^{p}$,
\begin{equation}
\theta=\sum_{n\geq1}\mathcal{I}\left(h_{n}\right)\left(\theta\right)h_{n}.\label{eq:H_basis expansion}
\end{equation}
2. Under $\mathcal{W}^{p}$, $\left\{ \mathcal{I}\left(h_{\mu}\right):\mu\in H^{-p}\right\} $
is again a family of centered Gaussian random variables with the covariance
given by, 
\begin{equation}
\begin{split}\forall\mu_{1},\mu_{2}\in H^{-p},\quad\mathbb{E}^{\mathcal{W}^{p}}\left[\mathcal{I}\left(h_{\mu_{1}}\right)\mathcal{I}\left(h_{\mu_{2}}\right)\right] & =\left(h_{\mu_{1}},h_{\mu_{2}}\right)_{H^{p}}=\left(\mu_{1},\left(I-\Delta\right)^{-p}\mu_{2}\right)_{L^{2}}\\
 & =\frac{1}{\left(2\pi\right)^{\nu}}\int_{\mathbb{R}^{\nu}}\frac{\widehat{\mu_{1}}\left(\xi\right)\overline{\widehat{\mu_{2}}\left(\xi\right)}}{\left(1+\left|\xi\right|^{2}\right)^{-p}}d\xi.
\end{split}
\label{eq:covariance structure}
\end{equation}
The formula (\ref{eq:covariance structure}) indicates that the covariance
structure of the GFF is determined by the Green's function of $\left(I-\Delta\right)^{p}$
on $\mathbb{R}^{\nu}$. \\
\begin{rem}
\label{rem: about the choice of operator}Instead of using the Bessel
operator $\left(I-\Delta\right)^{p}$ to construct GFFs on $\mathbb{R}^{\nu}$,
one can also use $\Delta^{p}$, equipped with proper boundary conditions,
to construct GFFs on bounded domains in $\mathbb{R}^{\nu}$, and this
is the case with the GFF treated in \cite{HMP,DS1,Shef} and many
other works. The field elements obtained in either way possess similar
properties locally in space. However, we adopt $\left(I-\Delta\right)^{p}$
in our project for technical reasons. Specifically, $\left(I-\Delta\right)^{p}$
allows the GFF to be defined on the entire space $\mathbb{R}^{\nu}$,
so we do not have to worry about any boundaries or boundary conditions,
and besides, we can carry out computations with the Fourier transforms
using Parseval's identity, which simplifies the task in many occasions.
\end{rem}

With different choices of $p$ and $\nu$, a generic element of the
GFF possesses different levels of singularity or regularity. For example
($\mathsection8$, \cite{probability}) , when $p=\frac{\nu+1}{2}$,
$\Theta^{\frac{\nu+1}{2}}$ can be taken as 
\[
\Theta^{\frac{\nu+1}{2}}:=\left\{ \theta\in C\left(\mathbb{R}^{\nu}\right):\lim_{\left|x\right|\rightarrow\infty}\,\frac{\left|\theta\left(x\right)\right|}{\log\left(e+\left|x\right|\right)}=0\right\} ,
\]
where $C\left(\mathbb{R}^{\nu}\right)$ is the space of $\mathbb{R}-$valued
continuous functions on $\mathbb{R}^{\nu}$, with the Banach norm
given by
\[
\left\Vert \theta\right\Vert _{\Theta^{\frac{\nu+1}{2}}}:=\sup_{x\in\mathbb{R}^{\nu}}\,\frac{\left|\theta\left(x\right)\right|}{\log\left(e+\left|x\right|\right)}.
\]
In other words, a generic element of the dim-$\nu$ order-$\frac{\nu+1}{2}$
GFF is a continuous function on $\mathbb{R}^{\nu}$ that grows slower
than logrithmically at infinity. In general, with $\nu$ fixed, the
larger $p$ is, the more regular the generic element is, and the smaller
$p$ is, the more singular the GFF becomes. In most of the cases that
are interesting to us, generic elements of the GFFs are only tempered
distributions and may not be point-wisely defined. For example, if
$p=\nu/2$, then the dim-$\nu$ order-$(\nu/2)$ GFF is a \emph{log-correlated
GFF} since the Green's function of $\left(I-\Delta\right)^{\nu/2}$
on $\mathbb{\mathbb{R}^{\nu}}$ has a logarithmic singularity along
the diagonal; in particular, the two-dimensional log-correlated GFFs
are most studied. On the other hand, if $p<\nu/2$ and $2p\in\mathbb{N}$
, then the field is a \emph{polynomial-correlated GFF }because in
this case, the corresponding Green's function has a polynomial singularity
of degree $\nu-2p$ along the diagonal. In this article, we aim to
explore new ways to study certain exceptional sets of GFFs with $p\in\mathbb{N}$
and $p\leq\nu/2$, and for the same reason as pointed out in $\mathsection3$
of \cite{Chen_thick_point}, we only need to treat the case when $p=1$
and $\nu\geq2$, without losing any generality. 

\subsection{Circular or Spherical Averages of GFFs}

For the rest of this article, we assume that $p=1$, $\nu\in\mathbb{N}$
and $\nu\geq2$, and write $H^{1}$, $\Theta^{1}$, and $\mathcal{W}^{1}$
as, respectively, $H$, $\Theta$ and $\mathcal{W}$ for simplification.
Denote by $\theta$ a generic element of the dim-$\nu$ order-$1$
GFF, i.e., $\theta\in\Theta$ is sampled under $\mathcal{W}$. We
have explained in the previous subsection that ``$\theta\left(x\right)$''
is not necessarily defined for every $x\in\mathbb{R}^{\nu}$, so we
will need to invoke a regularization procedure to study the behavior
of $\theta$ near $x$. As mentioned in the Introduction, in this
article we adopt the regularization based on the average of $\theta$
over a circle/sphere centered at $x$, which serves as an approximation
for ``$\theta\left(x\right)$'' as the radius tends to zero. Let
$B\left(x,t\right)$ and $\partial B\left(x,t\right)$ be the open
disc/ball and, respectively, the circle/sphere centered at $x\in\mathbb{R}^{\nu}$
with radius (under the Euclidean metric) $t\in(0,1]$, $\sigma_{x,t}$
be the length/surface measure on $\partial B\left(x,t\right)$, $\alpha_{\nu}:=\frac{2\pi^{\nu/2}}{\Gamma\left(\nu/2\right)}$
be the dimensional constant, and $\sigma_{x,t}^{ave}:=\frac{\sigma_{x,t}}{\alpha_{\nu}t^{\nu-1}}$
be the circle/sphere averaging measure over $\partial B\left(x,t\right)$.
By straightforward computations, we see that for every $x\in\mathbb{R}^{\nu}$
and $t\in(0,1]$, the Fourier transform of $\sigma_{x,t}^{ave}$ is
given by, 
\begin{equation}
\forall\xi\in\mathbb{R}^{\nu},\quad\widehat{\sigma_{x,t}^{ave}}\left(\xi\right)=\frac{\left(2\pi\right)^{\frac{\nu}{2}}}{\alpha_{\nu}}\,e^{i\left(x,\xi\right)_{\mathbb{R}^{\nu}}}\cdot\left(t\left|\xi\right|\right)^{\frac{2-\nu}{2}}J_{\frac{\nu-2}{2}}\left(t\left|\xi\right|\right)\label{eq:Fourier transform of spherical average}
\end{equation}
where $J_{\frac{\nu-2}{2}}$ is the standard Bessel function of the
first kind with index $\frac{\nu-2}{2}$. It is easy to check, using
(\ref{eq:Fourier transform of spherical average}) and the asymptotics
of $J_{\frac{\nu-2}{2}}$ at infinity, that $\sigma_{x,t}^{ave}\in H^{-1}\left(\mathbb{R}^{\nu}\right)$
and hence $h_{\sigma_{x,t}^{ave}}:=\left(I-\Delta\right)^{-1}\sigma_{x,t}^{ave}\in H$
and $\mathcal{I}\left(h_{\sigma_{x,t}^{ave}}\right)$ is a centered
Gaussian random variable under $\mathcal{W}$. This is to say that,
no matter how big $\nu$ is, no matter how singular the field element
$\theta$ is, one can always average $\theta$ over a circle/sphere
in $\mathbb{R}^{\nu}$ in the sense that the average exists as a centered
Gaussian random variable. Furthermore, $\left\{ \mathcal{I}\left(h_{\sigma_{x,t}^{ave}}\right):x\in\mathbb{R}^{\nu},t\in(0,1]\right\} $
forms a centered Gaussian family under $\mathcal{W}$ with the covariance
given by, for $x,y\in\mathbb{R}^{\nu}$, $t,s\in(0,1]$, when $x=y$,
\begin{equation}
\mathbb{E}^{\mathcal{W}}\left[\mathcal{I}\left(h_{\sigma_{x,t}^{ave}}\right)\mathcal{I}\left(h_{\sigma_{x,s}^{ave}}\right)\right]=\frac{1}{\alpha_{\nu}\left(ts\right)^{\frac{\nu-2}{2}}}\int_{0}^{\infty}\frac{\tau J_{\frac{\nu-2}{2}}\left(t\tau\right)J_{\frac{\nu-2}{2}}\left(s\tau\right)}{1+\tau^{2}}d\tau,\label{eq:covariance for (1-Delta)^s concentric}
\end{equation}
and when $x\neq y$,
\begin{eqnarray}
 &  & \begin{split} & \mathbb{E}^{\mathcal{W}}\left[\mathcal{I}\left(h_{\sigma_{x,t}^{ave}}\right)\mathcal{I}\left(h_{\sigma_{y,s}^{ave}}\right)\right]\\
 & \qquad=\frac{\left(2\pi\right)^{\nu/2}}{\alpha_{\nu}^{2}\left(ts\left|x-y\right|\right)^{\frac{\nu-2}{2}}}\int_{0}^{\infty}\frac{\tau^{2-\frac{\nu}{2}}J_{\frac{\nu-2}{2}}\left(t\tau\right)J_{\frac{\nu-2}{2}}\left(s\tau\right)J_{\frac{\nu-2}{2}}\left(\left|x-y\right|\tau\right)}{1+\tau^{2}}d\tau.
\end{split}
\label{eq:covariance for (1-Delta)^s-1}
\end{eqnarray}
The Gaussian family consisting of the circular/spherical averages
has been carefully treated and the integrals in (\ref{eq:covariance for (1-Delta)^s concentric})
and (\ref{eq:covariance for (1-Delta)^s-1}) have been computed explicitly
in $\mathsection3$ of \cite{Chen_thick_point}. Here we only cite
the results from \cite{Chen_thick_point} that are relevant to our
project, but do not repeat the calculations. Readers can turn to \cite{Chen_thick_point}
for details.
\begin{lem}
\label{lem:concentric cov} Let $x\in\mathbb{R}^{\nu}$ be fixed.
The distribution of the centered Gaussian family 
\[
\left\{ \mathcal{I}\left(h_{\sigma_{x,t}^{ave}}\right):t\in(0,1]\right\} 
\]
does not dependent on $x$ and\footnote{For two real numbers $a$ and $b$, ``$a\wedge b$'' refers to $\min\left\{ a,b\right\} $
and ``$a\vee b$'' refers to $\max\left\{ a,b\right\} .$}, 
\[
\forall t,s>0,\quad\mathbb{E}^{\mathcal{W}}\left[\mathcal{I}\left(h_{\sigma_{x,t}^{ave}}\right)\mathcal{I}\left(h_{\sigma_{x,s}^{ave}}\right)\right]=\frac{1}{\alpha_{\nu}\left(ts\right)^{\frac{\nu-2}{2}}}I_{\frac{\nu-2}{2}}\left(t\wedge s\right)K_{\frac{\nu-2}{2}}\left(t\vee s\right),
\]
where $I_{\frac{\nu-2}{2}}$ and $K_{\frac{\nu-2}{2}}$ are the modified
Bessel functions with index $\frac{\nu-2}{2}$. 

Further, if we renormalize the averages by defining 
\[
\forall t>0,\quad\bar{\sigma}_{x,t}:=\frac{\left(t/2\right)^{\frac{\nu-2}{2}}}{\Gamma\left(\nu/2\right)I_{\frac{\nu-2}{2}}\left(t\right)}\sigma_{x,t}^{ave},
\]
and set $\bar{\theta}_{t}\left(x\right):=\mathcal{I}\left(h_{\bar{\sigma}_{x,t}}\right)\left(\theta\right)$,
then $\left\{ \bar{\theta}_{t}\left(x\right):\,t\in(0,1]\right\} $
is a centered Gaussian process with the covariance give by, 
\begin{equation}
\forall0<s\leq t\leq1,\quad\mathbb{E}^{\mathcal{W}}\left[\bar{\theta}_{t}\left(x\right)\bar{\theta}_{s}\left(x\right)\right]=\frac{\alpha_{\nu}}{\left(2\pi\right)^{\nu}}\frac{K_{\frac{\nu-2}{2}}\left(t\right)}{I_{\frac{\nu-2}{2}}\left(t\right)}=:G\left(t\right).\label{eq:concentric cov renorm}
\end{equation}

In particular, $\left\{ \bar{\theta}_{t}\left(x\right):\,t\in(0,1]\right\} $
is a time-changed Brownian motion in the sense that if 
\[
\tau=\tau\left(t\right):=G\left(t\right)-G\left(1\right)\mbox{ for }t\in(0,1],
\]
then 
\[
\left\{ B_{\tau}:=\bar{\theta}_{G^{-1}\left(\tau+G\left(1\right)\right)}\left(x\right)-\bar{\theta}_{1}\left(x\right):\:\tau\geq0\right\} 
\]
has the same distribution as a standard Brownian motion. 
\end{lem}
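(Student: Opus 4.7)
The plan is to deduce the lemma from the already-displayed integral representation (\ref{eq:covariance for (1-Delta)^s concentric}) together with a classical Bessel-function integral, then do bookkeeping. Translation invariance is immediate: formula (\ref{eq:Fourier transform of spherical average}) shows that the only $x$-dependence in $\widehat{\sigma_{x,t}^{ave}}(\xi)$ is the phase $e^{i(x,\xi)_{\mathbb{R}^{\nu}}}$, which has modulus one and therefore cancels when the general covariance formula (\ref{eq:covariance structure}) is restricted to coinciding centers. Hence the joint distribution of $\{\mathcal{I}(h_{\sigma_{x,t}^{ave}}) : t\in(0,1]\}$ does not depend on $x$, and its covariance is given by (\ref{eq:covariance for (1-Delta)^s concentric}).

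To obtain the closed form, I would evaluate the integral in (\ref{eq:covariance for (1-Delta)^s concentric}) using the classical Lipschitz--Hankel identity
\[
\int_{0}^{\infty}\frac{\tau\,J_{\mu}(a\tau)J_{\mu}(b\tau)}{1+\tau^{2}}\,d\tau \;=\; I_{\mu}(a\wedge b)\,K_{\mu}(a\vee b),\qquad \mu>-1,
\]
specialized to $\mu=(\nu-2)/2$, $a=t$, $b=s$; this yields the first formula of the lemma. The renormalized covariance is then routine algebra: for $s\leq t\leq 1$,
\[
\mathbb{E}^{\mathcal{W}}\bigl[\bar{\theta}_{t}(x)\bar{\theta}_{s}(x)\bigr] = \frac{(ts/4)^{(\nu-2)/2}}{\Gamma(\nu/2)^{2}\,I_{(\nu-2)/2}(t)\,I_{(\nu-2)/2}(s)}\cdot\frac{I_{(\nu-2)/2}(s)\,K_{(\nu-2)/2}(t)}{\alpha_{\nu}(ts)^{(\nu-2)/2}},
\]
where the powers of $ts$ and one factor of $I_{(\nu-2)/2}(s)$ cancel, while the identity $\alpha_{\nu}=2\pi^{\nu/2}/\Gamma(\nu/2)$ collapses the remaining constants to $\alpha_{\nu}/(2\pi)^{\nu}$, producing $G(t)$. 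That the answer depends only on $t\vee s$ is already the Gaussian--Markov signature of a time-changed Brownian motion.

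For the time-change assertion I would verify that $G$ is strictly decreasing on $(0,1]$ (from the monotonicity of $K_{\mu}/I_{\mu}$, or from the fact that averaging over a larger sphere reduces variance), so $\tau(t)=G(t)-G(1)$ is nonnegative, strictly decreasing, with $\tau(1)=0$ and $\tau(t)\nearrow\infty$ as $t\searrow 0$. A one-line bilinearity calculation using $\mathbb{E}^{\mathcal{W}}[\bar{\theta}_{u}(x)\bar{\theta}_{v}(x)]=G(u\vee v)$ then gives, for $s\leq t\leq 1$,
\[
\mathbb{E}^{\mathcal{W}}\bigl[(\bar{\theta}_{t}(x)-\bar{\theta}_{1}(x))(\bar{\theta}_{s}(x)-\bar{\theta}_{1}(x))\bigr] \;=\; G(t)-G(1) \;=\; \tau(t)\wedge\tau(s),
\]
which is precisely the covariance of a standard Brownian motion at times $\tau(t)$ and $\tau(s)$. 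The only genuinely non-routine step is the Bessel integral identity used in the second paragraph; as the excerpt notes, this computation is carried out in detail in \cite{Chen_thick_point}, so in practice one would cite that reference (or a standard table such as Gradshteyn--Ryzhik 6.541). Everything else is elementary manipulation of Gaussian covariances and special-function identities.
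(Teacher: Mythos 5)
Your proof is correct. The paper itself does not prove this lemma---it explicitly defers the computation of the integrals in (\ref{eq:covariance for (1-Delta)^s concentric}) and (\ref{eq:covariance for (1-Delta)^s-1}) to $\mathsection3$ of \cite{Chen_thick_point}---so you are supplying exactly the argument that is being cited. The key step, reducing the integral to the Hankel-type identity $\int_{0}^{\infty}\frac{\tau\,J_{\mu}(a\tau)J_{\mu}(b\tau)}{1+\tau^{2}}\,d\tau=I_{\mu}(a\wedge b)K_{\mu}(a\vee b)$ for $\mu>-1$ (Gradshteyn--Ryzhik 6.541.1 with $c=1$), is the same one used there, and your bookkeeping with $\alpha_{\nu}=2\pi^{\nu/2}/\Gamma(\nu/2)$, the translation-invariance observation from the phase factor in (\ref{eq:Fourier transform of spherical average}), and the time-change verification via $\mathbb{E}^{\mathcal{W}}[\bar{\theta}_{u}(x)\bar{\theta}_{v}(x)]=G(u\vee v)$ are all sound.
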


One can verify that as $t\searrow0$,
\[
\lim_{t\searrow0}\frac{\left(t/2\right)^{\frac{\nu-2}{2}}}{\Gamma\left(\nu/2\right)I_{\frac{\nu-2}{2}}\left(t\right)}=1,
\]
so $\bar{\theta}_{t}\left(x\right)$ still is a ``legitimate'' approximation
of ``$\theta\left(x\right)$''. Moreover, by the asymptotics of
$K_{\frac{\nu-2}{2}}$ and $I_{\frac{\nu-2}{2}}$ near zero, the function
$G$ defined in (\ref{eq:concentric cov renorm}) is positive, smooth
and decreasing on $\left(0,\infty\right)$, and when $t$ is small,
\begin{equation}
G\left(t\right)=\begin{cases}
\frac{1}{2\pi}\left(-\ln t\right)+\mathcal{O}\left(1\right) & \mbox{ if }\nu=2,\\
\frac{1}{\alpha_{\nu}\left(\nu-2\right)}\cdot t^{2-\nu}+\mathcal{O}\left(t^{3-\nu}\right) & \mbox{ if }\nu\geq3,
\end{cases}\label{eq:asymptotic of G in 2D}
\end{equation}
which reflects the fact that the dim-$\nu$ order-1 GFF is log-correlated
in 2D and polynomial-correlated with degree $\nu-2$ in three and
higher dimensions. 

Throughout the rest of the article, we adopt 
\[
\left\{ \bar{\theta}_{t}\left(x\right):x\in\mathbb{R}^{\nu},t\in(0,1]\right\} 
\]
as the regularization of $\theta$. Not only does it reduce to a Brownian
motion (up to a time change) for the concentric family at every point
$x$, it also possesses favorable properties for the non-concentric
family under certain circumstances. 
\begin{lem}
\label{lem:nonconcentric cov} Assume that $x,y\in\mathbb{R}^{\nu}$and
$t,s\in(0,1]$.

(i) If $\left|x-y\right|\geq t+s$, i.e., if $B\left(x,t\right)\cap B\left(y,s\right)=\emptyset$,
then
\begin{equation}
\mathbb{E}^{\mathcal{W}}\left[\bar{\theta}_{t}\left(x\right)\bar{\theta}_{s}\left(y\right)\right]=\left(2\pi\right)^{-\nu/2}\frac{K_{\frac{\nu-2}{2}}\left(\left|x-y\right|\right)}{\left|x-y\right|^{\frac{\nu-2}{2}}}=:C_{disj}\left(\left|x-y\right|\right),\label{eq:cov non-concentric non-overlapping}
\end{equation}
In particular, when $\left|x-y\right|$ is small, 
\[
C_{disj}\left(\left|x-y\right|\right)=G\left(\left|x-y\right|\right)+\mathcal{O}\left(1\right),
\]
where $G$ is the same as in (\ref{eq:concentric cov renorm}).

(ii) If $t\geq\left|x-y\right|+s$, i.e., if $B\left(x,t\right)\supset B\left(y,s\right)$,
then
\begin{equation}
\mathbb{E}^{\mathcal{W}}\left[\bar{\theta}_{t}\left(x\right)\bar{\theta}_{s}\left(y\right)\right]=\left(2\pi\right)^{-\nu/2}\frac{I_{\frac{\nu-2}{2}}\left(\left|x-y\right|\right)}{\left|x-y\right|^{\frac{\nu-2}{2}}}\frac{K_{\frac{\nu-2}{2}}\left(t\right)}{I_{\frac{\nu-2}{2}}\left(t\right)}=:C_{incl}\left(t,\left|x-y\right|\right),\label{eq:cov non-concentric inclusion}
\end{equation}
In particular, when $t$ is small (and hence $\left|x-y\right|$ is
also small),
\[
C_{incl}\left(t,\left|x-y\right|\right)=G\left(t\right)+\mathcal{O}\left(1\right).
\]
\end{lem}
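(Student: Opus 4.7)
My plan is to write the covariance as a double integral against the Green's function of $(I - \Delta)$ and exploit the mean value property for the modified Helmholtz operator. From \eqref{eq:covariance structure} with $p = 1$,
\[
\mathbb{E}^{\mathcal{W}}\left[\bar{\theta}_t(x)\bar{\theta}_s(y)\right] = \int\!\!\int G_\nu(u, v) \, \bar{\sigma}_{x,t}(du) \, \bar{\sigma}_{y,s}(dv),
\]
where $G_\nu(u,v) = (2\pi)^{-\nu/2} |u-v|^{-(\nu-2)/2} K_{(\nu-2)/2}(|u-v|)$ is the Green's function of $(I - \Delta)$ on $\mathbb{R}^\nu$. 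The key ingredient is the mean value property: if $\phi$ solves $(I - \Delta)\phi = 0$ in $B(z, r)$, then $\int \phi \, d\bar{\sigma}_{z,r} = \phi(z)$. This is because a radial $(I-\Delta)$-harmonic function bounded at $z$ is a multiple of $|u - z|^{-(\nu-2)/2} I_{(\nu-2)/2}(|u-z|)$, whose sphere average over $\partial B(z, r)$ relative to its value at $z$ produces exactly the reciprocal of the renormalization constant defining $\bar{\sigma}_{z,r}$; the general case reduces to the radial one via spherical-harmonic decomposition.

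For part (i), when $B(x,t) \cap B(y,s) = \emptyset$, each $u \in \partial B(x,t)$ lies outside $B(y,s)$, so $v \mapsto G_\nu(u,v)$ is $(I - \Delta)$-harmonic in $B(y,s)$ and the inner integral collapses to $G_\nu(u, y)$. Since $y \notin B(x,t)$, a second application of the mean value property yields $G_\nu(x, y) = C_{disj}(|x-y|)$. The asymptotic $C_{disj}(r) = G(r) + \mathcal{O}(1)$ as $r \to 0$ then follows by substituting the small-argument series of $K_{(\nu-2)/2}$ (and, in the ratio defining $G$, also of $I_{(\nu-2)/2}$) and checking that the leading singular terms match.

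For part (ii) the same first step reduces matters to $\int G_\nu(u, y) \, \bar{\sigma}_{x,t}(du)$, but now $y$ lies inside $B(x,t)$, so $u \mapsto G_\nu(u, y)$ is singular inside the ball and the mean value property does not apply directly. The key technical tool here is the Gegenbauer addition formula
\[
\frac{K_{(\nu-2)/2}(|u-y|)}{|u-y|^{(\nu-2)/2}} = 2^{(\nu-2)/2}\Gamma\!\left(\tfrac{\nu-2}{2}\right) \sum_{n=0}^{\infty} \left(n + \tfrac{\nu-2}{2}\right) C_n^{(\nu-2)/2}(\cos\psi) \frac{I_{n + (\nu-2)/2}(|x-y|)}{|x-y|^{(\nu-2)/2}} \frac{K_{n + (\nu-2)/2}(t)}{t^{(\nu-2)/2}},
\]
valid when $|x-y| < t$, where $\psi$ is the angle between $u - x$ and $y - x$. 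Orthogonality of the Gegenbauer polynomials $C_n^{(\nu-2)/2}(\cos\psi)$ on $\partial B(x,t)$ annihilates every $n \geq 1$ term, leaving only the $n = 0$ contribution; absorbing the $\bar{\sigma}_{x,t}$-renormalization collapses the surviving $t^{(\nu-2)/2}$ factor and, using $\Gamma(\nu/2) = \tfrac{\nu-2}{2}\Gamma((\nu-2)/2)$, produces precisely $C_{incl}(t, |x-y|)$. The small-$t$ asymptotic then follows from $r^{-(\nu-2)/2} I_{(\nu-2)/2}(r) = 1/(2^{(\nu-2)/2}\Gamma(\nu/2)) + \mathcal{O}(r^2)$ combined with the definition of $G$ in \eqref{eq:concentric cov renorm}.

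The principal obstacle is case (ii): one must invoke (or derive) the Gegenbauer addition formula and verify that termwise integration, combined with the $\bar{\sigma}_{x,t}$-renormalization, yields exactly the $I \cdot K$ product structure of $C_{incl}$. For $\nu = 2$ the Gegenbauer expansion degenerates and one rephrases it as the classical addition formula for $K_0$ in terms of $\cos(n\psi)$; the argument then proceeds identically. Once these identities are in hand, the constant-tracking and asymptotic comparisons to $G$ are routine.
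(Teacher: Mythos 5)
The paper does not actually prove Lemma~\ref{lem:nonconcentric cov}; it cites the explicit evaluation of the Fourier-space triple-Bessel integrals (\ref{eq:covariance for (1-Delta)^s concentric}) and (\ref{eq:covariance for (1-Delta)^s-1}) from $\mathsection3$ of \cite{Chen_thick_point}, so the underlying proof there is a direct Weber--Schafheitlin-type computation of $\int_0^\infty \tau^{2-\nu/2}(1+\tau^2)^{-1}J_{(\nu-2)/2}(t\tau)J_{(\nu-2)/2}(s\tau)J_{(\nu-2)/2}(|x-y|\tau)\,d\tau$. Your proof takes a genuinely different, physical-space route: you write the covariance as the bilinear pairing of the two renormalized spherical measures against the Bessel potential kernel $G_\nu$, and then observe that the renormalization factor $\frac{(t/2)^{(\nu-2)/2}}{\Gamma(\nu/2)I_{(\nu-2)/2}(t)}$ is exactly what makes $\bar\sigma_{z,r}$ a reproducing measure for $(I-\Delta)$-harmonic functions (since the bounded radial solution is $r^{-(\nu-2)/2}I_{(\nu-2)/2}(r)$, which at $r=0$ equals $2^{-(\nu-2)/2}/\Gamma(\nu/2)$). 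In part~(i) both integrations collapse by this mean value property; in part~(ii) the inner one collapses but the outer integrand is singular inside the ball, and the Gegenbauer addition theorem for $K_\mu$ together with orthogonality of $C_n^{(\nu-2)/2}(\cos\psi)$ on $\partial B(x,t)$ isolates the $n=0$ term, which after absorbing the renormalization gives $C_{incl}$ exactly. The constants check out (the $\Gamma(\nu/2)=\tfrac{\nu-2}{2}\Gamma(\tfrac{\nu-2}{2})$ cancellation, the $\nu=2$ degenerate form of the addition theorem, and the $\mathcal{O}(1)$ asymptotics via $r^{-(\nu-2)/2}I_{(\nu-2)/2}(r)\to 2^{-(\nu-2)/2}/\Gamma(\nu/2)$ and $t^2G(t)=\mathcal{O}(1)$). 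This approach makes transparent \emph{why} the renormalized average is the right object, and it replaces a table-lookup on oscillatory integrals with a single classical special-function identity. Two small points you should tidy up: when $|x-y|=t+s$ in~(i), or $t=|x-y|+s$ in~(ii), the sphere $\partial B(x,t)$ touches $\overline{B(y,s)}$ at one point where $G_\nu(u,\cdot)$ is not continuous on $\overline{B(y,s)}$, so the mean value property should be applied for $s'<s$ and a limit taken (the singularity is integrable, so dominated convergence handles it); and the addition theorem requires strict inequality $|x-y|<t$, which is why you must keep $x\ne y$ as a hypothesis (the $x=y$ case is Lemma~\ref{lem:concentric cov}). Neither affects the validity of the argument.
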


We would like to point out that (\ref{eq:cov non-concentric non-overlapping})
and (\ref{eq:cov non-concentric inclusion}) showcase the advantage
of this particular choice of regularization. Under the assumption
(i) or (ii) of Lemma \ref{lem:nonconcentric cov}, small radius (radii)
does not affect the covariance, which is a desirable property to have
when studying ``convergence'' in any reasonable sense as radius
(radii) tends to zero. 

The project we will carry out in this article only concerns local
behaviors of the GFF, and obviously, the distribution of ``$\theta\left(x\right)$''
is invariant under translations in the spatial variable $x$. So,
without loss of generality, we may only consider the GFF restricted
over $\overline{S\left(O,1\right)}$, the closed square/cube centered
at the origin with side length $2$ under the Euclidean metric\footnote{Similarly, for $x\in\mathbb{R}^{\nu}$ and $s>0$, $S\left(x,s\right)$
and $\overline{S\left(x,s\right)}$ are the Euclidean open and, respectively,
closed square/cube centered at $x$ with side length $2s$.}. An important factor in treating a GFF via a regularization is the
continuity property possessed by the regularized family. For the family
$\left\{ \bar{\theta}_{t}\left(x\right):x\in\overline{S\left(O,1\right)},t\in(0,1]\right\} $,
its continuity has been investigated in \cite{HMP} and \cite{Chen_thick_point},
via standard techniques, such as Kolmogorov's continuity criterion
(e.g., $\mathsection4$ in \cite{probability}) and the classical
entropy method (e.g., \cite{Dudley,Talagrand,AT07}). Here we review
some results on the continuity modulus of $\bar{\theta}_{t}\left(x\right)$,
and they will become important technical tools in our later discussions. 
\begin{lem}
\label{lem:estimate for d^2_t non-concentric} \label{lem:expectation of max non-concentric}Let
$\nu\geq2$. Consider the intrinsic metric $d$ associated with the
Gaussian family 
\[
\left\{ \bar{\theta}_{t}\left(x\right):x\in\overline{S\left(O,1\right)},\,t\in(0,1]\right\} 
\]
 given by
\[
\forall x,y\in\mathbb{R}^{\nu},\,\forall t,s\in(0,1],\quad d\left(x,t;\,y,s\right):=\sqrt{\mathbb{E}^{\mathcal{W}}\left[\left(\bar{\theta}_{t}\left(x\right)-\bar{\theta}_{s}\left(y\right)\right)^{2}\right]}.
\]

(i) There exists a constant\footnote{Throughout the article, $C$ refers to a constant that only depends
on the dimension $\nu$. $C$'s value may vary from line to line.} $C>0$ such that for every $t,s\in(0,1]$ and every $x,y\in\overline{S\left(O,1\right)}$
, 
\[
d^{2}\left(x,t;\,y,t\right)\leq Ct^{2-\nu}\sqrt{\frac{\left|x-y\right|}{t}}.
\]
and hence, 
\[
d^{2}\left(x,t;\,y,s\right)\leq C\left(t^{2-\nu}\sqrt{\frac{\left|x-y\right|}{t}}+\left|G\left(t\right)-G\left(s\right)\right|\right).
\]
Therefore, we may assume that for every $\theta\in\Theta$, the function
\[
\left(x,t\right)\in\overline{S\left(O,1\right)}\times(0,1]\rightsquigarrow\bar{\theta}_{t}\left(x\right)\in\mathbb{R}
\]
is continuous. 

(ii) There exists a constant $C>0$ such that for every $t\in(0,1]$
and every $0<\delta<\sqrt{G\left(t\right)}$, if
\[
\omega_{t}^{\theta}\left(\delta\right):=\sup\left\{ \left|\bar{\theta}_{s}\left(x\right)-\bar{\theta}_{s^{\prime}}\left(y\right)\right|:\,d\left(x,s;y,s^{\prime}\right)\leq\delta,\,x,y\in\overline{S\left(O,1\right)},\,s,s^{\prime}\in\left[t,1\right]\right\} ,
\]
then
\[
\mathbb{E}^{\mathcal{W}}\left[\omega_{t}^{\theta}\left(\delta\right)\right]\leq C\delta\sqrt{\ln\left(t^{\left(3-2\nu\right)/4}/\delta\right)}.
\]
\end{lem}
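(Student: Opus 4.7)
The plan is to tackle part (i) by expressing the intrinsic distance through the Paley-Wiener isometry as an $H^{-1}$ norm of the difference of the averaging measures, and then reducing to a Fourier integral. Concretely, $d^2(x,t;y,t) = \|\bar{\sigma}_{x,t} - \bar{\sigma}_{y,t}\|_{H^{-1}}^2$, and using Parseval's identity together with the explicit Fourier transform (\ref{eq:Fourier transform of spherical average}) and the renormalization factor from Lemma \ref{lem:concentric cov} reduces this to a single Fourier integral in which the spatial dependence appears only through the factor $1 - \cos((x-y,\xi))$. I would split this integral at a frequency $R$ chosen to balance two competing contributions: on $\{|\xi| \leq R\}$ apply the Taylor bound $1 - \cos \leq \tfrac{1}{2}|x-y|^2 |\xi|^2$, and on $\{|\xi| > R\}$ use the trivial bound $|1 - \cos| \leq 2$ together with the asymptotic decay $J_{(\nu-2)/2}(u)^2 \lesssim 1/u$ for $u \to \infty$. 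Optimizing over $R$ will yield the spatial bound $d^2(x,t;y,t) \leq Ct^{2-\nu}\sqrt{|x-y|/t}$. The full joint bound then follows from the triangle inequality for $d$ via the intermediate point $(y,t)$, together with the concentric identity $d^2(y,t;y,s) = 2|G(t) - G(s)|$ inherited from Lemma \ref{lem:concentric cov}. Since $\{\bar{\theta}_t(x)\}$ is centered Gaussian, all $L^q$-moments of the increments are controlled by $d$, so Kolmogorov's continuity criterion delivers the jointly continuous version asserted in the lemma.

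For part (ii), the strategy is to apply Dudley's entropy estimate for centered Gaussian processes on the compact set $K := \overline{S(O,1)} \times [t,1]$ equipped with the intrinsic metric $d$. From part (i), a $d$-ball of radius $\epsilon$ around $(x,s) \in K$ contains any Euclidean ball of spatial radius $\sim \epsilon^4 t^{2\nu-3}$ and any $s'$-interval whose image under $G$ has length $\lesssim \epsilon^2$; combining a spatial covering of $\overline{S(O,1)}$ with a radial covering of $[t,1]$ based on the asymptotics (\ref{eq:asymptotic of G in 2D}) of $G$ yields
\[
\log N\bigl(K,d,\epsilon\bigr) \leq C \log\bigl(t^{(3-2\nu)/4}/\epsilon\bigr)
\]
up to constants depending only on $\nu$. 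Plugging this into Dudley's inequality
\[
\mathbb{E}^{\mathcal{W}}\bigl[\omega_t^\theta(\delta)\bigr] \leq C \int_0^\delta \sqrt{\log N(K,d,\epsilon)}\, d\epsilon
\]
and observing that the integrand is logarithmic (so the integral is dominated by its behavior at the upper endpoint $\epsilon = \delta$) produces the stated bound $C\delta\sqrt{\ln(t^{(3-2\nu)/4}/\delta)}$.

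The main technical obstacle is the Fourier/Bessel analysis in part (i): obtaining the precise exponent $1/4$ on $|x-y|$ requires the optimal frequency split and careful use of the Bessel function asymptotics in the regime where $B(x,t)$ and $B(y,t)$ overlap partially, which is the case not already covered by Lemma \ref{lem:nonconcentric cov}. Any loss in this spatial Hölder exponent would propagate as a weaker entropy bound and hence a weaker modulus of continuity in part (ii). The remaining ingredients (Kolmogorov's criterion, the Dudley inequality, and product covering number estimates) are standard, but care is needed to extract the correct power $(3-2\nu)/4$ of $t$ when combining the spatial and radial contributions in the entropy bound.
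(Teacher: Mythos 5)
Your proposal follows essentially the same path as the paper: reduce $d^2(x,t;y,t)$ to a radial Fourier integral via Parseval and the explicit transform of the averaging measure, bound the spatial factor, and then run Dudley's entropy bound on $\overline{S(O,1)}\times[t,1]$ with a product covering built from the spatial Hölder estimate and the concentric variance $|G(t)-G(s)|$. The one place you diverge is in how the spatial factor is controlled in part (i). The paper writes the single radial integral with the kernel $\Psi(w) := 1-\frac{(2\pi)^{\nu/2}}{\alpha_\nu}w^{(2-\nu)/2}J_{(\nu-2)/2}(w)$, observes that $\Psi$ has a double zero at $0$ and is bounded at infinity so that $|\Psi(w)|\leq C\sqrt{w}$ uniformly, and this pulls $\sqrt{|x-y|}$ straight out of the integral; the remaining $\tau$-integral is then estimated by splitting at $\tau=1$ and $\tau=1/t$ (purely for the Bessel asymptotics, independent of $|x-y|$). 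You instead propose to keep the dichotomy $1-\cos w \leq \min(\tfrac12 w^2, 2)$, split the frequency range at an $R$ depending on $|x-y|$, and optimize. The two are morally the same, but note that your optimization actually yields the sharper estimate $d^2(x,t;y,t)\lesssim t^{1-\nu}|x-y|$, and you would need to observe that this implies the stated $Ct^{2-\nu}\sqrt{|x-y|/t}$ when $|x-y|\leq t$ (and use the trivial bound $d^2\leq 2G(t)$ otherwise); the paper's $\sqrt{w}$-interpolation sidesteps this step at the cost of a weaker exponent. One more minor point: the reduction to a one-dimensional integral is not immediate from Parseval -- after the factor $1-\cos((x-y,\xi))$ appears you still must carry out the angular $\xi$-integration, which is exactly what produced the Bessel-kernel form $\Psi(\tau|x-y|)$ in the paper's covariance formulas (\ref{eq:covariance for (1-Delta)^s concentric}) and (\ref{eq:covariance for (1-Delta)^s-1}); you should either cite those directly or include the angular integral as an explicit step. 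Part (ii) of your plan matches the paper's proof, including the covering radii $\sim\epsilon^4 t^{2\nu-3}$ in space and $\sim\epsilon^2$ in $G$, and the dominant entropy contribution is correctly identified.
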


The proof of Lemma \ref{lem:expectation of max non-concentric} is
left in the Appendix $\mathsection6.1$, because the arguments are
based on straightforward calculations following the standard entropy
method, and are very similar to those in $\mathsection3$ of \cite{Chen_thick_point}.

\section{Steep Points of Gaussian Free Fields}

Let $\left(H,\Theta,\mathcal{W}\right)$ be the dim-$\nu$ order-$1$
GFF, $\nu\geq2$, $\theta\in\Theta$ be a generic element of the GFF,
and for each $t\in(0,1]$ and $x\in\overline{S\left(O,1\right)}$,
$\bar{\theta}_{t}\left(x\right)$ be the renormalized circular/spherical
average as introduced in the previous section. As defined in (\ref{eq:2D thick point})
and (\ref{eq:thick point def}) in the Introduction, thick points
of $\theta$ are, intuitively speaking, locations of ``high peaks''
in the graph of $\theta$; more rigorously, thick points are defined
as $x\in\overline{S\left(O,1\right)}$ such that the value of $\bar{\theta}_{t}\left(x\right)$
is unusually large for $t$ being sufficiently small. In this section,
we will focus on another perspective of the behavior of $\bar{\theta}_{t}\left(x\right)$,
that is, the rate of change of $\bar{\theta}_{t}\left(x\right)$ as
$t\searrow0$. If one could establish, in a proper sense, that for
some $x\in\overline{S\left(O,1\right)}$, the rate of change of $\bar{\theta}_{t}\left(x\right)$
is unusually large when $t$ is small, then one would expect that
the ``landscape'' of $\theta$ near $x$ is unusually steep since
$\bar{\theta}_{t}\left(x\right)$ is approximately the average of
$\theta$ over $\partial B\left(x,t\right)$. Taking into account
of this consideration, we refer to $x\in\overline{S\left(O,1\right)}$
where $\bar{\theta}_{t}\left(x\right)$ changes unusually fast in
$t$ as $t\searrow0$ as a ``steep point'' of $\theta$. 

Although ``$\frac{d}{dt}\left(\bar{\theta}_{t}\left(x\right)\right)$''
is the natural thing to consider when studying the rate of change
of $\bar{\theta}_{t}\left(x\right)$ with respect to $t$, it is clear
from the last statement of Lemma \ref{lem:concentric cov} that at
any given $x$, $\bar{\theta}_{t}\left(x\right)$ is almost surely
nowhere differentiable in $t$. To overcome the indifferentiability,
we will study the change of rate of $\bar{\theta}_{t}$ using certain
test function $f:(0,1]\rightarrow\mathbb{R}$, that is, study $\frac{d}{dt}\bar{\theta}_{t}\left(x\right)$
weighted by $f\left(t\right)$. The choice of such test function $f$
is rather general, provided $f$ satisfying some basic requirements.
First, in order to pair $f$ with $\frac{d}{dt}\bar{\theta}_{t}\left(x\right)$,
$f$ should have bounded variation at least locally on $(0,1]$. Second,
since $f$ has to overcome the singularity of the field element when
$t$ is small, it is natural to require $\left|f\right|$, the absolute
value of $f$, to decay to $0$ sufficiently fast as $t\searrow0$.
On the other hand, $\left|f\right|$ should not decay too fast so
that unusual behaviors of $\frac{d}{dt}\bar{\theta}_{t}\left(x\right)$
can still be captured. In addition, for technical reasons, we also
require $f$ to not ``jump'' too frequently. There is flexibility
in setting up the class of test functions to which the methods and
the results discussed in later sections apply. For the pedagogical
purpose, we adopt the following specific class of test functions. 
\begin{defn}
\label{def:class C }Define $\mathcal{C}$ to be the family of function
$f:(0,1]\rightarrow\mathbb{R}$ satisfying that 

\textbf{(a)} there exist constants\footnote{The constants $C_{f}$ and $\rho_{f}$ may depend on $f$, and the
values may be different from line to line. } $C_{f}>0$ and $\rho_{f}>0$ such that 
\[
\forall t\in(0,1],\quad\left|f\left(t\right)\sqrt{G\left(t\right)}\right|\leq C_{f}\left[\left(-\ln t\right)^{\rho_{f}}+1\right];
\]

\textbf{(b)} $f:(0,1]\rightarrow\mathbb{R}$ is left-continuous, $f$
has at most countably many jump discontinuities, and if
\[
\mathcal{J}:=\left\{ t_{j}:j\geq1,\;0<\cdots<t_{j+1}<t_{j}<\cdots<t_{1}<1\right\} 
\]
is the collection of the jump discontinuities of $f$ (in decreasing
order), then\footnote{For a discrete set $A$, ``$\#(A)$'' refers to the cardinality
of $A$.} 
\[
\forall t\in(0,1],\quad\#\left(\mathcal{J}\cap\left[t,1\right]\right)\leq C_{f}\left[\left(-\ln t\right)^{\rho_{f}}+1\right];
\]

\textbf{(c)} for each $j\geq1$, the absolute value function $\left|f\right|$
is non-decreasing on $(t_{j+1},t_{j}]$ (but $\left|f\right|$ does
not have to be non-decreasing on $(0,1]$), which, combined with \textbf{(a)}
and \textbf{(b)}, implies that $f\in BV_{loc}\left((0,1]\right)$; 

\textbf{(d)} if for every $t\in(0,1]$, $\Sigma_{t}^{f}:=\int_{1}^{t}\,f^{2}\left(s\right)dG\left(s\right)$
(defined as a Riemann-Stieltjes integral), then $\Sigma_{t}^{f}\nearrow\infty$
as $t\searrow0$. 
\end{defn}

Given $f\in\mathcal{C}$, since for every $\theta\in\Theta$, $x\in\overline{S\left(O,1\right)}$
and $t\in(0,1]$, $s\in\left[t,1\right]\rightsquigarrow\bar{\theta}_{s}\left(x\right)$
is continuous, and $f\in BV\left(\left[t,1\right]\right)$, we can
also define 
\[
X_{t}^{f,\theta}\left(x\right):=\int_{1}^{t}\,f\left(s\right)d\bar{\theta}_{s}\left(x\right)=f\left(t\right)\bar{\theta}_{t}\left(x\right)-f\left(1\right)\bar{\theta}_{1}\left(1\right)-\int_{1}^{t}\bar{\theta}_{s}\left(x\right)df\left(s\right)
\]
as a Riemann-Stieltjes integral. Again, by the last statement in Lemma
\ref{lem:concentric cov}, it is clear that for any fixed $x$, $\left\{ X_{t}^{f,\theta}\left(x\right):t\in(0,1]\right\} $
is a Gaussian process with independent increment (in the direction
of $t$ decreasing) and 
\[
\forall t\in(0,1],\quad\mathbb{E}^{\mathcal{W}}\left[\left(X_{t}^{f,\theta}\left(x\right)\right)^{2}\right]=\Sigma_{t}^{f}.
\]
In other words, $\left\{ X_{t}^{f,\theta}\left(x\right):t\in(0,1]\right\} $
can also be viewed as a Brownian motion running by the ``clock''
$\Sigma_{t}^{f}$, and according to\textbf{ (d)} in Definition \ref{def:class C },
the ``clock'' goes on forever. It follows immediately from the Law
of the Iterated Logarithm that for every $x\in\overline{S\left(O,1\right)}$,
\begin{equation}
\limsup_{t\searrow0}\frac{X_{t}^{f,\theta}\left(x\right)}{\sqrt{2\Sigma_{t}^{f}\,\ln\ln\Sigma_{t}^{f}}}=1\quad\mbox{for }\mbox{\ensuremath{\mathcal{W}}-}\mbox{almost every }\theta\in\Theta.\label{eq:LIL for X_t}
\end{equation}

Analogous to the idea of searching for thick points, we want to identify
points $x$ where $X_{t}^{f,\theta}\left(x\right)$ becomes unusually
large and will define such points as $f-$steep points of $\theta$.
\begin{defn}
\label{def:steep point}Given $f\in\mathcal{C}$, $x\in\overline{S\left(O,1\right)}$
is called an $f-$\emph{steep point }of $\theta\in\Theta$ if
\begin{equation}
\lim_{t\searrow0}\,\frac{X_{t}^{f,\theta}\left(x\right)}{\Sigma_{t}^{f}}=\sqrt{2\nu}.\label{eq:def of steep point}
\end{equation}
Let $D^{f,\theta}$ be the set of all the $f-$steep points of $\theta$. 

Related to $D^{f,\theta}$, we also introduce the set of\emph{ super
$f-$steep points }given by 
\[
D_{\limsup}^{f,\theta}:=\left\{ x\in\overline{S\left(O,1\right)}:\,\limsup_{t\searrow0}\,\frac{X_{t}^{f,\theta}\left(x\right)}{\Sigma_{t}^{f}}\geq\sqrt{2\nu}\right\} 
\]
as well as the set of\emph{ sub $f-$steep points }given by
\[
D_{\liminf}^{f,\theta}:=\left\{ x\in\overline{S\left(O,1\right)}:\,\liminf_{t\searrow0}\,\frac{X_{t}^{f,\theta}\left(x\right)}{\Sigma_{t}^{f}}\geq\sqrt{2\nu}\right\} .
\]
\end{defn}

Obviously,
\begin{equation}
D^{f,\theta}\subseteq D_{\liminf}^{f,\theta}\subseteq D_{\limsup}^{f,\theta},\label{eq:sets relation 1}
\end{equation}
and the simple observation (\ref{eq:LIL for X_t}) implies that for
every $x\in\overline{S\left(O,1\right)}$, 
\[
\mathcal{W}\left(x\in D^{f,\theta}\right)=\mathcal{W}\left(x\in D_{\liminf}^{f,\theta}\right)=\mathcal{W}\left(x\in D_{\limsup}^{f,\theta}\right)=0.
\]
In other words, the (super/sub) $f-$steep point sets are exceptional
sets of the GFF. Indeed, one can clearly see from (\ref{eq:def of steep point})
that the $f-$steep points of $\theta$ should be viewed as the thick
points corresponding to the process $\left\{ X_{t}^{f,\theta}:t\in(0,1]\right\} $.
As a consequence, one would expect that the results established on
the Hausdorff dimension of thick point sets, as reviewed in $\mathsection1$,
can be extended to steep point sets. This is our goal in this section,
and we summarize the main results in the following theorem.
\begin{thm}
\label{thm:main theorem hausdorff dimension}Given $f\in\mathcal{C}$,
set

\[
\bar{c}_{f}\,:=\limsup_{t\searrow0}\frac{\Sigma_{t}^{f}}{-\ln t}\;\mbox{and }\underline{c}_{f}:=\liminf_{t\searrow0}\frac{\Sigma_{t}^{f}}{-\ln t}.
\]

(i) For $\mathcal{W}-$almost every $\theta\in\Theta$, if $1<\bar{c}_{f}\leq\infty$,
then 
\[
D^{f,\theta}=D_{\liminf}^{f,\theta}=\emptyset;
\]
if $0<\underline{c}_{f}\leq\bar{c}_{f}\leq1$, then 

\[
\nu\left(1-2\bar{c}_{f}+\underline{c}_{f}\right)\leq\dim_{\mathcal{H}}\left(D^{f,\theta}\right)\leq\dim_{\mathcal{H}}\left(D_{\liminf}^{f,\theta}\right)\leq\nu\left(1-\bar{c}_{f}\right).
\]

(ii) For $\mathcal{W}-$almost every $\theta\in\Theta$, if $1<\underline{c}_{f}\leq\infty$,
then 
\[
D_{\limsup}^{f,\theta}=\emptyset;
\]
if $0<\underline{c}_{f}\leq1$, then 
\[
\nu\left(1-2\bar{c}_{f}+\underline{c}_{f}\right)\leq\dim_{\mathcal{H}}\left(D_{\limsup}^{f,\theta}\right)\leq\nu\left(1-\underline{c}_{f}\right).
\]
\end{thm}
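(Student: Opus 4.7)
The plan is to handle the two upper bounds via covering arguments along carefully chosen subsequences of radii, and then attack the matching (when $\bar{c}_f = \underline{c}_f$) lower bounds through a multiscale second-moment construction in the spirit of \cite{HMP} and \cite{Chen_thick_point}.

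For the upper bound $\dim_{\mathcal{H}}\bigl(D_{\liminf}^{f,\theta}\bigr) \leq \nu(1-\bar{c}_f)$, the plan is: fix $\epsilon>0$ and extract a sequence $t_n\searrow 0$ along which $\Sigma_{t_n}^{f}/(-\ln t_n)\to \bar{c}_f$. Partition $\overline{S(O,1)}$ into cubes of side comparable to $t_n$. Any $x\in D_{\liminf}^{f,\theta}$ satisfies $X_{t_n}^{f,\theta}(x)\geq (\sqrt{2\nu}-\epsilon)\Sigma_{t_n}^{f}$ for all large $n$, and by the continuity modulus in Lemma \ref{lem:expectation of max non-concentric} the same bound (with slightly worsened constants) propagates to the center of the cube. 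Since $X_{t_n}^{f,\theta}(x_Q)$ is centered Gaussian with variance $\Sigma_{t_n}^{f}$, the Gaussian tail gives a probability $\leq \exp(-(\nu-O(\epsilon))\Sigma_{t_n}^{f})\leq t_n^{(\nu-O(\epsilon))(\bar{c}_f-\epsilon)}$, and the expected number of good cubes is at most $t_n^{-\nu+(\nu-O(\epsilon))(\bar{c}_f-\epsilon)}$. A Borel--Cantelli/covering argument then yields $\dim_{\mathcal{H}}\bigl(D_{\liminf}^{f,\theta}\bigr)\leq \nu(1-\bar{c}_f)+O(\epsilon)$, and when $\bar{c}_f>1$ the sum over $n$ converges to zero, forcing $D_{\liminf}^{f,\theta}=\emptyset$ a.s. The parallel argument, but using instead a sequence along which $\Sigma_{t_n}^{f}/(-\ln t_n)\to \underline{c}_f$, handles $D_{\limsup}^{f,\theta}$ (only one small radius is needed to witness the limsup, so the worse ratio $\underline{c}_f$ suffices) and proves part (ii).

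For the lower bound I would implement a multiscale construction. Pick a rapidly decreasing geometric sequence $r_n=a^n$ and a dense enough subsequence $(r_{n_k})_k$ such that $\Sigma_{r_{n_k}}^{f}/(-\ln r_{n_k})$ is close to $\bar{c}_f$; between consecutive $n_k$'s, monitor the increments $X_{r_n}^{f,\theta}-X_{r_{n-1}}^{f,\theta}$. A dyadic cube $Q_n$ of side $r_n$ is declared \emph{good} if its center $x_{Q_n}$ satisfies both the global estimate $X_{r_n}^{f,\theta}(x_{Q_n})\in \bigl[(\sqrt{2\nu}-\epsilon)\Sigma_{r_n}^{f},(\sqrt{2\nu}+\epsilon)\Sigma_{r_n}^{f}\bigr]$ and the incremental estimate $X_{r_n}^{f,\theta}(x_{Q_n})-X_{r_{n-1}}^{f,\theta}(x_{Q_n})\approx \sqrt{2\nu}\bigl(\Sigma_{r_n}^{f}-\Sigma_{r_{n-1}}^{f}\bigr)$; the target random set is $K:=\bigcap_n\bigcup_{Q_n\text{ good, nested}}Q_n$, and the incremental control guarantees $K\subseteq D^{f,\theta}$. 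The first moment of the count of nested good families is controlled by the Gaussian density at the target level, which along the $\bar{c}_f$ subsequence gives roughly $r_n^{-\nu+\nu\bar{c}_f}$; the second moment requires the non-concentric covariance estimates from Lemma \ref{lem:nonconcentric cov}, which show that for disjoint or nested configurations the covariance is essentially $G$ evaluated at the larger scale, yielding near-independence of widely separated cubes. A Paley--Zygmund / Frostman energy argument, applied to the natural random measure supported on $K$, then produces the Hausdorff-dimension lower bound.

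The main obstacle will be the lower-bound second-moment estimate and the emergence of the factor $\nu(1-2\bar{c}_f+\underline{c}_f)$. The first moment uses the \emph{worst} (largest) clock rate $\bar{c}_f$, while the second moment splits pairs of points by their separation scale and requires bounding the conditional variance of the increments across all intermediate scales; the weakest intermediate increments are controlled only by $\underline{c}_f$, so the diagonal contribution carries an extra $\exp((\bar{c}_f-\underline{c}_f)\nu(-\ln r_n))$ penalty that must be absorbed. Balancing these two effects gives exactly the reduction $\nu\bigl(1-\bar{c}_f-(\bar{c}_f-\underline{c}_f)\bigr)=\nu(1-2\bar{c}_f+\underline{c}_f)$, and in the uniform case $\bar{c}_f=\underline{c}_f=c_f$ this collapses to $\nu(1-c_f)$, matching the upper bound. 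Keeping the incremental, global, and continuity estimates compatible throughout this multiscale argument, and verifying that class $\mathcal{C}$'s jump and growth restrictions in Definition \ref{def:class C } do not destabilize the covariance comparisons, is the technical heart of the proof.
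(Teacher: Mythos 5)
Your overall strategy matches the paper's: a covering argument for the upper bounds, and for the lower bound a multiscale ``good cube'' construction with first- and second-moment control feeding into a Frostman/energy argument, with the correct accounting of $\bar c_f$ (first-moment scale) versus $\bar c_f-\underline c_f$ (second-moment penalty across intermediate separation scales) producing the factor $\nu(1-2\bar c_f+\underline c_f)$. This is exactly what the paper does via the events $P_{x,n}^f$, $\Phi_{x,n}^f$, the Cameron--Martin estimate in Lemma \ref{lem:analysis of the probability}, and the measures $\mu_n^{f,\theta}$. However, there are two places where your construction, as written, would not close.

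First, the discrete constraints and the geometric scale choice. You declare a cube good based on the values of $X_{r_n}^{f,\theta}$ at a discrete set of radii and then assert ``the incremental control guarantees $K\subseteq D^{f,\theta}$.'' That step fails twice. To land in $D^{f,\theta}$ you must control $X_t^{f,\theta}/\Sigma_t^f$ for \emph{all} small $t$, not just along a sequence, so the good events must constrain $\sup_{t\in[r_n,r_{n-1}]}\bigl|X_t^{f,\theta}(x)-X_{r_{n-1}}^{f,\theta}(x)-\sqrt{2\nu}(\Sigma_t^f-\Sigma_{r_{n-1}}^f)\bigr|$; with only endpoint constraints you obtain $K\subseteq SD^{f,\theta}$, which is a strictly weaker conclusion. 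Separately, with geometric $r_n=a^n$ the window sizes $\sqrt{\Delta\Sigma_n^f}$ are $\Theta(1)$, so the accumulated deviation $\sum_{i\le\ell}\sqrt{\Delta\Sigma_i^f}$ and the normalization $\Sigma_{r_\ell}^f$ are both $\Theta(\ell)$; the ratio does not vanish, and the interpolation from discrete to continuous time breaks down. A sub-exponential sequence such as $r_n=2^{-n^2}$ makes $\sum_{i\le\ell}\sqrt{\Delta\Sigma_i^f}\asymp\ell^{3/2}$ against $\Sigma_{r_\ell}^f\asymp\ell^2$, which is what the paper exploits (see the display (\ref{eq:an estimate used in finding subset of steep point}) in the proof of Lemma \ref{lem: subset of D^f,theta}). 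Your ``dense enough subsequence $(r_{n_k})$'' can repair this only if the cube scale, the monitoring scale, and the window are all set at the $r_{n_k}$ level and the constraints are sup-over-interval there, in which case you have reinvented the paper's choices.

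Second, two smaller but necessary ingredients are missing. The upper bound for $D_{\limsup}^{f,\theta}$ cannot proceed by fixing a sequence $t_n$ in advance, because the witnessing radii for the limsup depend on $x$ and $\theta$; the paper instead bounds the supremum of $X_t^{f,\theta}/\Sigma_t^f$ over each dyadic radius-interval using the Borell--TIS inequality and uses $\Sigma_t^f\ge(\underline c_f-\epsilon)(-\ln t)$ for all small $t$. And at the end of the lower-bound argument, the Frostman step delivers only $\mathcal{W}(\dim_{\mathcal{H}}(D^{f,\theta})\ge\alpha)>0$; upgrading this to probability one requires a zero--one law, which the paper obtains from the orthogonal expansion (\ref{eq:H_basis expansion}) and Hewitt--Savage. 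Without this step the almost-sure statement is not justified.
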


Certainly the lower bound in the statements (i) and (ii) is only meaningful
if $2\bar{c}_{f}-\underline{c}_{f}<1$. However, as we will see later
that the estimates above, especially the lower bounds of the Hausdorff
dimension of the concerned sets, can be improved through imposing
further constraints on $f$. In particular, when $\bar{c}_{f}=\underline{c}_{f}$,
Theorem \ref{thm:main theorem hausdorff dimension} is reduced to
the following fact.
\begin{cor}
\label{cor:when sigma/ln limit exists}Given $f\in\mathcal{C}$, suppose
that
\[
\bar{c}_{f}=\underline{c}_{f}=c_{f}:=\lim_{t\searrow0}\frac{\Sigma_{t}^{f}}{-\ln t}.
\]
Then, for $\mathcal{W}-$almost every $\theta\in\Theta$, if $1<c_{f}\leq\infty$,
then 
\[
D^{f,\theta}=D_{\liminf}^{f,\theta}=D_{\limsup}^{f,\theta}=\emptyset;
\]
if $0<c_{f}\leq1$, then 
\[
\dim_{\mathcal{H}}\left(D^{f,\theta}\right)=\dim_{\mathcal{H}}\left(D_{\liminf}^{f,\theta}\right)=\dim_{\mathcal{H}}\left(D_{\limsup}^{f,\theta}\right)=\nu\left(1-c_{f}\right).
\]
\end{cor}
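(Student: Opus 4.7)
The plan is to deduce this corollary directly from Theorem \ref{thm:main theorem hausdorff dimension} by specializing to the case $\bar{c}_{f}=\underline{c}_{f}=c_{f}$ and observing that the two-sided estimates in the theorem collapse into equalities. Throughout I would work on the full-$\mathcal{W}$-measure event on which both parts (i) and (ii) of Theorem \ref{thm:main theorem hausdorff dimension} hold simultaneously; such an event exists since the intersection of two events of full measure still has full measure.

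For the emptiness assertion, suppose $c_{f}>1$. Then $\bar{c}_{f}=c_{f}>1$, so part (i) yields $D^{f,\theta}=D_{\liminf}^{f,\theta}=\emptyset$, and $\underline{c}_{f}=c_{f}>1$, so part (ii) yields $D_{\limsup}^{f,\theta}=\emptyset$. No further argument is needed.

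For the case $0<c_{f}\leq 1$, I would invoke the inclusion chain $D^{f,\theta}\subseteq D_{\liminf}^{f,\theta}\subseteq D_{\limsup}^{f,\theta}$ recorded in (\ref{eq:sets relation 1}), which by monotonicity of Hausdorff dimension gives
\[
\dim_{\mathcal{H}}\left(D^{f,\theta}\right)\leq\dim_{\mathcal{H}}\left(D_{\liminf}^{f,\theta}\right)\leq\dim_{\mathcal{H}}\left(D_{\limsup}^{f,\theta}\right).
\]
The lower bound from part (i) of Theorem \ref{thm:main theorem hausdorff dimension} reads $\nu(1-2c_{f}+c_{f})=\nu(1-c_{f})\leq\dim_{\mathcal{H}}(D^{f,\theta})$, while the upper bound from part (ii) reads $\dim_{\mathcal{H}}(D_{\limsup}^{f,\theta})\leq\nu(1-c_{f})$. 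These sandwich all three Hausdorff dimensions between the common value $\nu(1-c_{f})$, forcing the desired equalities.

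The genuine difficulty lies entirely in Theorem \ref{thm:main theorem hausdorff dimension} itself; once that theorem is in hand the corollary amounts to bookkeeping. The only mildly substantive observation is that, under $\bar{c}_{f}=\underline{c}_{f}$, the lower bound $\nu(1-2\bar{c}_{f}+\underline{c}_{f})$ from (i) exactly matches the upper bound $\nu(1-\underline{c}_{f})$ from (ii), which is precisely what collapses the two-sided estimates into sharp identities.
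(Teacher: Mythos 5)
Your proof is correct and follows exactly the route the paper intends: the corollary is stated immediately after Theorem \ref{thm:main theorem hausdorff dimension} as a direct specialization (with the remark that when $\bar{c}_{f}=\underline{c}_{f}$ the theorem ``is reduced to'' this fact), and the paper gives it no separate argument. Your observation that the lower bound $\nu\left(1-2\bar{c}_{f}+\underline{c}_{f}\right)$ and the upper bounds $\nu\left(1-\bar{c}_{f}\right)$ and $\nu\left(1-\underline{c}_{f}\right)$ all collapse to $\nu\left(1-c_{f}\right)$, combined with the inclusion chain (\ref{eq:sets relation 1}), is precisely the bookkeeping required.
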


Besides $D^{f,\theta}$, $D_{\liminf}^{f,\theta}$ and $D_{\limsup}^{f,\theta}$,
we will also treat the set of \emph{sequential $f-$steep points}
given by
\begin{equation}
SD^{f,\theta}:=\left\{ x\in\overline{S\left(O,1\right)}:\lim_{m\nearrow\infty}\frac{X_{r_{m}}^{f,\theta}\left(x\right)}{\Sigma_{r_{m}}^{f}}=\sqrt{2\nu}\right\} ,\label{eq:def of sequential steep point}
\end{equation}
where $\left\{ r_{m}:m\geq1\right\} \subseteq(0,1]$ is a sequence
such that $r_{m}\searrow0$ as $m\nearrow\infty$. Obviously,
\begin{equation}
D^{f,\theta}\subseteq SD^{f,\theta}\subseteq D_{\limsup}^{f,\theta}.\label{eq:sets relation 2}
\end{equation}
When $f$ and $\left\{ r_{m}:m\geq1\right\} $ satisfy proper conditions,
on one hand, our method allows us to derive estimates for $\dim_{\mathcal{H}}\left(SD^{f,\theta}\right)$,
and on the other hand, as we will see in Proposition \ref{prop:lower bound on D^f,theta}(iii),
the result on $\dim_{\mathcal{H}}\left(SD^{f,\theta}\right)$ leads
to an improvement of the  lower bound of $\dim_{\mathcal{H}}\left(D_{\limsup}^{f,\theta}\right)$
given in Theorem \ref{thm:main theorem hausdorff dimension}. 

The proof of Theorem \ref{thm:main theorem hausdorff dimension} follows
a similar line of arguments as in \cite{HMP} and \cite{Chen_thick_point},
combined with an analysis of the continuity property of the family
$\left\{ X_{t}^{f,\theta}\left(x\right):x\in\overline{S\left(O,1\right)},t\in(0,1]\right\} $
which we will carry out with the help of Lemma \ref{lem:expectation of max non-concentric}.
Below we will study the Hausdorff dimension of $D^{f,\theta}$, $D_{\liminf}^{f,\theta}$,
$SD^{f,\theta}$ and $D_{\limsup}^{f,\theta}$ by establishing the
``upper bounds'' and the ``lower bounds'' separately.
\begin{rem}
The condition ``$\underline{c}_{f}>0$'' used in Theorem \ref{thm:main theorem hausdorff dimension},
or the condition ``$c_{f}>0$'' in Corollary \ref{cor:when sigma/ln limit exists},
can be dropped in certain circumstances. As we will see in $\mathsection3.2$,
this condition is only needed for technical reasons in the proof of
the lower bound for $\dim_{\mathcal{H}}\left(D^{f,\theta}\right)$.
We will discuss in $\mathsection3.2$ and $\mathsection4.2.3$ how
the methods and the results may still apply in certain cases even
if $\underline{c}_{f}=0$ or $c_{f}=0$. 
\end{rem}

\subsection{Upper Bounds}

This subsection is devoted to establishing the upper bounds for the
Hausdorff dimension of the concerned exceptional sets. To get started,
we need to develop estimates related to the modulus of continuity
for the family
\[
\left\{ \frac{X_{t}^{f,\theta}\left(x\right)}{\Sigma_{t}^{f}}:x\in\overline{S\left(O,1\right)},t\in(0,1]\right\} .
\]
Instead of studying this Gaussian family directly, our strategy is
to make use of the existing results on the modulus of continuity of
\[
\left\{ \bar{\theta}_{t}\left(x\right):x\in\overline{S\left(O,1\right)},t\in(0,1]\right\} .
\]
In particular, we have the following lemma.
\begin{lem}
\label{lem:modulus of X}For every $n\geq1$, let $B_{n}$ be the
subset of $\overline{S\left(O,1\right)}\times\overline{S\left(O,1\right)}\times(0,1]$
that 
\[
B_{n}:=\left\{ \left(x,y,t\right):\,x,y\in\overline{S\left(O,1\right)},\left|x-y\right|<2^{-\left(n+1\right)^{2}}2\sqrt{\nu},\,t\in\left[2^{-n^{2}},2^{-\left(n-1\right)^{2}}\right]\right\} .
\]
Then, for any $f\in\mathcal{C}$, when $n$ is sufficiently large,
\[
\mathbb{E}^{\mathcal{W}}\left[\sup_{\left(x,y,t\right)\in B_{n}}\left|\frac{X_{t}^{f,\theta}\left(y\right)}{\Sigma_{t}^{f}}-\frac{X_{t}^{f,\theta}\left(x\right)}{\Sigma_{t}^{f}}\right|\right]\leq2^{-\frac{n}{4}},
\]
and hence
\begin{equation}
\mathcal{W}\left(\sup_{\left(x,y,t\right)\in B_{n}}\left|\frac{X_{t}^{f,\theta}\left(y\right)}{\Sigma_{t}^{f}}-\frac{X_{t}^{f,\theta}\left(x\right)}{\Sigma_{t}^{f}}\right|>2^{-\frac{n}{8}}\;\mbox{ i.o. }\right)=0.\label{eq:modulus on X/Sigma}
\end{equation}
\end{lem}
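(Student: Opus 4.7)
The plan is to adapt the chaining-plus-entropy strategy used in \cite{HMP,Chen_thick_point} to control moduli of continuity for $\bar{\theta}$, here applied to the $f$-weighted Gaussian family $\{X_t^{f,\theta}(x)\}$. The starting point is the integration-by-parts identity
\[
X_t^{f,\theta}(y)-X_t^{f,\theta}(x)=f(t)\bigl[\bar{\theta}_t(y)-\bar{\theta}_t(x)\bigr]-f(1)\bigl[\bar{\theta}_1(y)-\bar{\theta}_1(x)\bigr]-\int_1^t\bigl[\bar{\theta}_s(y)-\bar{\theta}_s(x)\bigr]\,df(s),
\]
which is legitimate since $s\mapsto\bar{\theta}_s(x)$ is continuous by Lemma~\ref{lem:expectation of max non-concentric}(i) and $f\in BV_{\mathrm{loc}}((0,1])$ by properties \textbf{(b)}--\textbf{(c)} of Definition~\ref{def:class C }. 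This representation transfers all the randomness into the spatial oscillations $\bar{\theta}_s(y)-\bar{\theta}_s(x)$, which are precisely what Lemma~\ref{lem:expectation of max non-concentric} controls.

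Next I would estimate the intrinsic metric $d_X(x,t;y,t):=\sqrt{\mathbb{E}^{\mathcal{W}}[(X_t^{f,\theta}(y)-X_t^{f,\theta}(x))^2]}$. Applying Minkowski's inequality to the displayed identity, combined with $d(x,s;y,s)\le C s^{(3-2\nu)/4}|x-y|^{1/4}$ from Lemma~\ref{lem:expectation of max non-concentric}(i), the growth bound $|f(s)|\sqrt{G(s)}\le C_f[(-\ln s)^{\rho_f}+1]$ from \textbf{(a)}, and the bound on the number of jumps of $f$ from \textbf{(b)} together with the piecewise-monotonicity of $|f|$ from \textbf{(c)} (which controls the total variation of $f$), one obtains an estimate of the shape $d_X(x,t;y,t)\le C\,|x-y|^{1/4}\,\Psi(t)$, where $\Psi(t)$ grows at most polynomially in $\log(1/t)$ and in $G(t)^{\kappa}$ for an explicit exponent $\kappa$ depending on $\nu$ and $\rho_f$. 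On $B_n$, the factor $|x-y|^{1/4}\le(2\sqrt{\nu})^{1/4}\cdot 2^{-(n+1)^2/4}$ is super-exponentially small, and since $\Sigma_t^f\to\infty$ by property \textbf{(d)}, one has $\Sigma_t^f\ge 1$ for $n$ large. An exponent check then shows that the $2^{-(n+1)^2/4}$ suppression absorbs the worst-case growth of $\Psi(t)$ at $t=2^{-n^2}$, yielding
\[
\sup_{(x,y,t)\in B_n}\frac{d_X(x,t;y,t)}{\Sigma_t^f}\le C\,n^{O(1)}\cdot 2^{-n/2}.
\]

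Finally, I would apply the standard Gaussian maximal inequality (Dudley's entropy bound, as in the proof of Lemma~\ref{lem:expectation of max non-concentric}(ii)) to the rescaled family $\{X_t^{f,\theta}(x)/\Sigma_t^f\}$ on $B_n$. The covering number of $B_n$ at scale $\epsilon$ grows at most polynomially in $n/\epsilon$, so $\sqrt{\log N}$ contributes only polynomial-in-$n$ factors, giving
\[
\mathbb{E}^{\mathcal{W}}\!\left[\sup_{(x,y,t)\in B_n}\left|\frac{X_t^{f,\theta}(y)-X_t^{f,\theta}(x)}{\Sigma_t^f}\right|\right]\le C\,n^{O(1)}\cdot 2^{-n/2}\le 2^{-n/4}
\]
for all $n$ sufficiently large. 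The concentration statement (\ref{eq:modulus on X/Sigma}) then follows from Markov's inequality combined with the Borel--Cantelli lemma, since $\mathcal{W}\bigl(\sup_{B_n}|\cdot|>2^{-n/8}\bigr)\le 2^{-n/8}$ is summable in $n$.

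The main obstacle is the algebraic bookkeeping in the intrinsic-metric step: one must simultaneously track the blow-up of $|f(s)|$ from \textbf{(a)} (which behaves like $G(s)^{-1/2}\sim s^{(\nu-2)/2}$ in dimensions $\nu\ge 3$), the blow-up of $s^{(3-2\nu)/4}$ from Lemma~\ref{lem:expectation of max non-concentric}(i) at the smallest scale $s=2^{-n^2}$, the polynomial-in-$n$ growth of the total variation of $f$ on $[2^{-n^2},1]$ (bounded through \textbf{(b)} and \textbf{(c)}), and the normalization by $\Sigma_t^f$. These factors must all be paired correctly against the super-exponential $|x-y|^{1/4}$-suppression and the growth of $\Sigma_t^f$ so that the decay $2^{-n/2}$ holds uniformly across all dimensions $\nu\ge 2$; the specific choice of the exponents $(n+1)^2$ in the spatial resolution and $n^2$ in the scale parameter is precisely what makes this cancellation succeed.
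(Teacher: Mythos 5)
Your overall plan is genuinely different from the paper's. The paper does not compute the intrinsic $L^{2}$ metric of the process $X_{t}^{f,\theta}(x)$ and never applies Dudley's bound directly to the normalized family $\{X_{t}^{f,\theta}(x)/\Sigma_{t}^{f}\}$. Instead it works pathwise: it sets $m_{n}^{\theta}(t):=\sup\{|\bar{\theta}_{s}(x)-\bar{\theta}_{s}(y)|: |x-y|\le 2^{-(n+1)^{2}}2\sqrt{\nu},\ s\in[t,1]\}$, bounds $\mathbb{E}^{\mathcal{W}}[m_{n}^{\theta}(t)]\le C\sqrt{G(t)}\,2^{-n/2}n$ using Lemma~\ref{lem:expectation of max non-concentric}, and then bounds the random variable $\sup_{B_{n}}|X_{t}^{f,\theta}(y)/\Sigma_{t}^{f}-X_{t}^{f,\theta}(x)/\Sigma_{t}^{f}|$ deterministically in $\theta$ by Riemann--Stieltjes manipulations of the continuous function $s\mapsto\bar{\theta}_{s}(y)-\bar{\theta}_{s}(x)$, using a telescoping decomposition over the jump points $t_{j}$ of $f$ and the piecewise monotonicity of $|f|$. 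The expectation is taken only at the very end. This sidesteps any covering of $B_{n}$ in the $t$-direction of the $X$-family's intrinsic metric, which your route does not. If you want to apply Dudley to the three-parameter Gaussian process $Z(x,y,t)=X_{t}^{f,\theta}(y)/\Sigma_{t}^{f}-X_{t}^{f,\theta}(x)/\Sigma_{t}^{f}$ on $B_{n}$, you also need to estimate the $t$-increments $\|X_{t}^{f,\theta}(x)/\Sigma_{t}^{f}-X_{t'}^{f,\theta}(x)/\Sigma_{t'}^{f}\|_{L^{2}}$ to control the covering number, and you do not address this.

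The more serious issue is the step you call ``an exponent check then shows \ldots''. This is in fact where the entire difficulty of the lemma lives, and the routine version of it fails for $\nu\ge3$. After Minkowski you face $\int_{t}^{1}d(x,s;y,s)\,|df(s)|\le C|x-y|^{1/4}\int_{t}^{1}s^{(3-2\nu)/4}\,|df(s)|$. Bounding this by $|x-y|^{1/4}\,t^{(3-2\nu)/4}\,\mathrm{TV}(f;[t,1])$ and invoking \textbf{(a)}, \textbf{(b)}, \textbf{(c)} to get $\mathrm{TV}(f;[t,1])\le C_{f}\,n^{O(1)}$ yields a term of order $2^{-(n+1)^{2}/4}\,2^{n^{2}(2\nu-3)/4}\,n^{O(1)}$, whose exponent $\frac{(2\nu-3)n^{2}-(n+1)^{2}}{4}$ is positive and diverges for every $\nu\ge3$. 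So the $|x-y|^{1/4}$ suppression does \emph{not} absorb the crude worst-case growth of $\Psi(t)$. To save the argument one has to use $s\ge t$ and factor $s^{(3-2\nu)/4}=s^{(2-\nu)/2}\cdot s^{-1/4}$, pair $s^{-1/4}$ with $|x-y|^{1/4}$ inside the integral to obtain $(|x-y|/s)^{1/4}\le C\,2^{-n/2}$ uniformly over $B_{n}$ and $s\in[t,1]$, identify $s^{(2-\nu)/2}\asymp\sqrt{G(s)}$, and then bound $\int_{t}^{1}\sqrt{G(s)}\,|df(s)|$ by integrating by parts on each continuity interval of $f$ and invoking \textbf{(a)} to turn $|f(s)|\sqrt{G(s)}$ into a polylogarithmic quantity, and \textbf{(b)}, \textbf{(c)} to control boundary terms and the number of pieces. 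This is exactly the Riemann--Stieltjes manipulation that the paper carries out (albeit at the pathwise level on $\bar{\theta}_{s}(y)-\bar{\theta}_{s}(x)$ rather than on the $L^{2}$ metric), so the ``bookkeeping'' you deferred is the heart of the proof; as written, the proposal leaves it as a gap.
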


Again, we skip the technical details for now, and leave the complete
proof in the Appendix $\mathsection6.2$. However, we would like to
point out that the proof of Lemma \ref{lem:modulus of X} makes use
of the conditions\textbf{ (a)(b)(c)} on $f\in\mathcal{C}$ as required
in Definition \ref{def:class C }, but it is clear from the proof
that those conditions are not unique, and they merely serve technical
purposes.

We are now ready to prove the upper bounds for $\dim_{\mathcal{H}}\left(D^{f,\theta}\right)$
and $\dim_{\mathcal{H}}$$\left(D_{\limsup}^{f,\theta}\right)$, and
the exactly same arguments also lead to an upper bound of $\dim_{\mathcal{H}}\left(SD^{f,\theta}\right)$
associated with any sequence that decays to zero.
\begin{prop}
\label{prop: upper bound on Hausdorff dim of limsup set}Given $f\in\mathcal{C}$,
let $\bar{c}_{f}$ and $\underline{c}_{f}$ be as defined in Theorem
\ref{thm:main theorem hausdorff dimension}. Then, for almost every
$\theta\in\Theta$, if $\underline{c}_{f}>1$, then 
\[
D^{f,\theta}=D_{\liminf}^{f,\theta}=D_{\limsup}^{f,\theta}=\emptyset;
\]
if $\underline{c}_{f}\leq1<\bar{c}_{f}$, then 
\[
D^{f,\theta}=D_{\liminf}^{f,\theta}=\emptyset\;\text{ and }\;\dim_{\mathcal{H}}\left(D_{\limsup}^{f,\theta}\right)\leq\nu\left(1-\underline{c}_{f}\right);
\]
if $\bar{c}_{f}\leq1$, then 
\[
\dim_{\mathcal{H}}\left(D^{f,\theta}\right)\leq\dim_{\mathcal{H}}\left(D_{\liminf}^{f,\theta}\right)\leq\nu\left(1-\bar{c}_{f}\right).
\]

Furthermore, suppose that $\left\{ r_{m}:m\geq1\right\} \subseteq(0,1]$
is a sequence such that $r_{m}\searrow0$ as $m\nearrow\infty$ and
\[
\limsup_{m\nearrow\infty}\frac{\Sigma_{r_{m}}^{f}}{-\ln r_{m}}=:c\in\left[0,\infty\right],
\]
and $SD^{f,\theta}$ is the sequential $f-$steep point set associated
with $\left\{ r_{m}:m\geq1\right\} $. Then for $\mathcal{W}-$almost
every $\theta$, if $c>1$, then 
\[
SD^{f,\theta}=\emptyset;
\]
if $c\leq1$, then 
\[
\dim_{\mathcal{H}}\left(SD^{f,\theta}\right)\leq\nu\left(1-c\right).
\]
\end{prop}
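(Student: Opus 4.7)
The strategy is to mimic the covering argument used for thick points in \cite{HMP,Chen_thick_point}, but applied to the Gaussian process $X_t^{f,\theta}(x)/\Sigma_t^f$, with Lemma \ref{lem:modulus of X} supplying the essential spatial modulus of continuity. Fix $\epsilon>0$ and $\alpha$ slightly above the claimed dimension bound. For every $n$, tile $\overline{S(O,1)}$ by $\sim 2^{\nu(n+1)^2}$ closed cubes of side $2^{-(n+1)^2}$, so each cube has diameter compatible with the set $B_n$. For a cube $C$ with centre $x_C$, Borel--Cantelli applied to (\ref{eq:modulus on X/Sigma}) ensures that, almost surely, for all large $n$ and every $y\in C$ and $t\in[2^{-n^2},2^{-(n-1)^2}]$, $|X_t^{f,\theta}(y)-X_t^{f,\theta}(x_C)|/\Sigma_t^f\leq 2^{-n/8}$. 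Thus a large-value condition satisfied by some $y\in C$ at threshold $\sqrt{2\nu}-\epsilon$ transfers to $x_C$ at threshold $\sqrt{2\nu}-2\epsilon$ for all large $n$, reducing everything to Gaussian-tail estimates for variables indexed by cube centres.

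For $D_{\liminf}^{f,\theta}$ (and hence $D^{f,\theta}\subseteq D_{\liminf}^{f,\theta}$) the defining inequality holds for \emph{every} small $t$, so for each $n$ I am free to choose $t_n\in[2^{-n^2},2^{-(n-1)^2}]$ maximizing $\Sigma_t^f/(-\ln t)$ on that range. Along a subsequence $n_k$ these choices satisfy $\Sigma_{t_{n_k}}^f/(-\ln t_{n_k})\to \bar{c}_f$ by definition of $\limsup$. The Gaussian tail gives $\mathcal{W}(X_{t_{n_k}}^{f,\theta}(x_C)\geq(\sqrt{2\nu}-2\epsilon)\Sigma_{t_{n_k}}^f)\leq\exp(-(\nu-O(\epsilon))\Sigma_{t_{n_k}}^f)$, and combined with $\Sigma_{t_{n_k}}^f\geq(\bar{c}_f-o(1))(n_k-1)^2\ln 2$ and the cube count $\sim 2^{\nu(n_k+1)^2}$, the expected $\alpha$-Hausdorff content of the bad cubes at scale $n_k$ is bounded by $2^{(n_k+1)^2(\nu-\alpha)-(\nu-O(\epsilon))(\bar{c}_f-o(1))(n_k-1)^2}$, which tends to $0$ whenever $\alpha>\nu(1-\bar{c}_f)$; Fatou's lemma along $n_k$ then yields $\mathcal{H}^\alpha(D_{\liminf}^{f,\theta})=0$ almost surely. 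If $\bar{c}_f>1$, the same estimate drives the expected number of bad cubes to $0$ along $n_k$, forcing $D_{\liminf}^{f,\theta}=\emptyset$ almost surely by a union bound.

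The $D_{\limsup}^{f,\theta}$ case is the main technical obstacle, because the defining inequality holds only along some $t_k\searrow 0$, so I cannot preselect $t_n$ and the ``bad'' event at scale $n$ involves the supremum
\[
\sup_{t\in[2^{-n^2},2^{-(n-1)^2}]}\,\frac{X_t^{f,\theta}(x_C)}{\Sigma_t^f}\geq\sqrt{2\nu}-2\epsilon.
\]
Using the Brownian-motion representation from Lemma \ref{lem:concentric cov}, namely $X_t^{f,\theta}(x_C)=B_{\tau(t)}$ with $\tau(t)=\Sigma_t^f-\Sigma_1^f$, I split this bad event into (i) $\{B_{\tau_{\min}}\geq(\sqrt{2\nu}-3\epsilon)(\tau_{\min}+\Sigma_1^f)\}$ and (ii) $\{\sup_{\tau\in[\tau_{\min},\tau_{\max}]}(B_\tau-B_{\tau_{\min}})\geq\epsilon\tau_{\min}\}$, where $\tau_{\min}=\Sigma_{2^{-(n-1)^2}}^f-\Sigma_1^f$, $\tau_{\max}=\Sigma_{2^{-n^2}}^f-\Sigma_1^f$. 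Part (i) has Gaussian tail $\exp(-(\nu-O(\epsilon))\tau_{\min})$; part (ii) is bounded via the reflection principle by $\exp(-\epsilon^2\tau_{\min}^2/(2(\tau_{\max}-\tau_{\min})))$, which is much smaller because $\tau_{\max}-\tau_{\min}=O(n)$ while $\tau_{\min}=\Theta(n^2)$. Since $\underline{c}_f$ controls $\tau_{\min}/((n-1)^2\ln 2)$ from below for \emph{all} large $n$, summability over all $n$ (not just along a subsequence) gives both $D_{\limsup}^{f,\theta}=\emptyset$ when $\underline{c}_f>1$ and $\dim_{\mathcal{H}}D_{\limsup}^{f,\theta}\leq\nu(1-\underline{c}_f)$ when $\underline{c}_f\leq 1$, via Borel--Cantelli and the $\alpha$-content computation.

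Finally, for the sequential set $SD^{f,\theta}$, each $x\in SD^{f,\theta}$ satisfies $X_{r_m}^{f,\theta}(x)/\Sigma_{r_m}^f\geq\sqrt{2\nu}-\epsilon$ for all large $m$, so along any subsequence $m_k$ with $\Sigma_{r_{m_k}}^f/(-\ln r_{m_k})\to c$, embedding $r_{m_k}$ into the dyadic scale $n_k$ defined by $r_{m_k}\in[2^{-n_k^2},2^{-(n_k-1)^2}]$ and repeating the $D_{\liminf}^{f,\theta}$ argument verbatim (with the pre-specified $r_{m_k}$ in place of a chosen $t_{n_k}$) delivers the two stated bounds. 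The containments $D^{f,\theta}\subseteq D_{\liminf}^{f,\theta}\subseteq D_{\limsup}^{f,\theta}$ then transfer the relevant dimension bounds to $D^{f,\theta}$, and the regime separations fall out from comparing $\bar{c}_f$ and $\underline{c}_f$. Apart from the sup-in-$t$ decomposition above, which is sensitive to how $\Sigma_t^f$ grows between the endpoints of each dyadic range, the whole proof is routine bookkeeping of Gaussian tails against a dyadic covering.
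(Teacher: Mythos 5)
Your treatment of $D_{\liminf}^{f,\theta}$ and $SD^{f,\theta}$ is essentially the paper's argument (pick a dyadic scale, transfer to cell centres via Lemma \ref{lem:modulus of X}, Gaussian tail plus Fatou along a subsequence achieving $\bar c_f$, resp. $c$). However, your treatment of $D_{\limsup}^{f,\theta}$ has a genuine gap. Your two-part decomposition of the bad event relies on the claim $\tau_{\max}-\tau_{\min}=\Sigma_{2^{-n^2}}^f-\Sigma_{2^{-(n-1)^2}}^f=O(n)$, but nothing in $f\in\mathcal C$ guarantees this. The hypotheses only give $\underline c_f(1+o(1))(n-1)^2\ln 2\le\Sigma_{2^{-(n-1)^2}}^f$ and $\Sigma_{2^{-n^2}}^f\le\bar c_f(1+o(1))n^2\ln 2$; when $\bar c_f>\underline c_f$ (which is allowed in the regime $\underline c_f\le 1$, and indeed $\bar c_f$ may be infinite), the increment $\Delta\Sigma_n^f$ can be $\Theta(n^2)$. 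In that case your reflection-principle bound for part (ii), $\exp\bigl(-\epsilon^2\tau_{\min}^2/(2(\tau_{\max}-\tau_{\min}))\bigr)$, gives exponent $\Theta(\epsilon^2 n^2)$; since $\epsilon$ must be taken small to recover the sharp threshold, this does not dominate the $\Theta(\nu\underline c_f n^2)$ exponent from part (i), and the sum of the two probabilities is no longer bounded by $C\,2^{-(n-1)^2\nu c_f''(1-a)^2}$. The covering and $\alpha$-content computation then fail to close. The paper avoids this by applying the Borell--TIS inequality directly to the Gaussian family $\bigl\{X_t^{f,\theta}(x)/\Sigma_t^f : t\in[2^{-n^2},2^{-(n-1)^2}]\bigr\}$: the tail depends only on the variance supremum $\sigma^2=1/\Sigma_{2^{-(n-1)^2}}^f$ and the expected supremum (which is $O\bigl((\Sigma_{2^{-(n-1)^2}}^f)^{-1/2}\bigr)$), and is completely insensitive to how much $\Sigma_t^f$ grows across the dyadic window. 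You could repair your argument by replacing the crude one-shot decomposition with the Borell--TIS or an equivalent chaining argument, but as written the $D_{\limsup}^{f,\theta}$ step does not go through for general $f\in\mathcal C$.
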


\begin{proof}
We will first prove the results concerning $D_{\liminf}^{f,\theta}$,
from which the claims about $D^{f,\theta}$ follow (\ref{eq:sets relation 1}),
and the rest of the statement can be proved by similar arguments with
minor changes. Assume $\bar{c}_{f}>0$. Otherwise the inequality on
$\dim_{\mathcal{H}}\left(D_{\liminf}^{f,\theta}\right)$ is satisfied
trivially. For each $n\geq0$, consider a finite lattice partition
of $\overline{S\left(O,1\right)}$ with cell size $2\cdot2^{-n^{2}}$
(i.e., the length, under the Euclidean metric, of each side of the
cell is $2\cdot2^{-n^{2}}$). Let $\left\{ x_{j}^{\left(n\right)}:\,j=1,\cdots,J_{n}\right\} $
be the collection of the lattice cell centers where $J_{n}=2^{\nu n^{2}}$
is the total number of the cells. Fix $c_{f}^{\prime}\in(0,\bar{c}_{f})$
and let $c_{f}^{\prime}$ be arbitrarily close to $\bar{c}_{f}$.
If $\bar{c}_{f}=\infty$, then we take $c_{f}^{\prime}$ to be arbitrarily
large. There exists a sequence $\left\{ s_{k}:k\geq1\right\} \subseteq\left(0,1\right)$
such that $s_{k}\searrow0$ as $k\nearrow\infty$, and 
\[
\Sigma_{s_{k}}^{f}>c_{f}^{\prime}\left(-\ln s_{k}\right)\mbox{ for all }k\geq1.
\]
For each $k\geq1$, set $n_{k}$ to be the unique positive integer
such that 
\[
2^{-n_{k}^{2}}<s_{k}\leq2^{-\left(n_{k}-1\right)^{2}}.
\]
According to (\ref{eq:modulus on X/Sigma}), for $\mathcal{W}-$almost
every $\theta\in\Theta$, there is an integer $N_{\theta}$ such that
for every $n\geq N_{\theta}$ and every $j=1,\cdots,J_{n+1}$
\begin{equation}
\sup_{\left(y,t\right)\in\overline{S\left(x_{j}^{(n+1)},2^{-\left(n+1\right)^{2}}\right)}\times\left[2^{-n^{2}},2^{-\left(n-1\right)^{2}}\right]}\,\left|\frac{X_{t}^{f,\theta}\left(y\right)}{\Sigma_{t}^{f}}-\frac{X_{t}^{f,\theta}\left(x_{j}^{(n+1)}\right)}{\Sigma_{t}^{f}}\right|\leq2^{-\frac{n}{8}}.\label{modulus estimate used in proof of upper bound}
\end{equation}
If $y_{0}\in D_{\liminf}^{f,\theta}$, then 
\[
\liminf_{k\nearrow\infty}\frac{X_{s_{k}}^{f,\theta}\left(y_{0}\right)}{\Sigma_{s_{k}}^{f}}\geq\sqrt{2\nu},
\]
and hence (\ref{modulus estimate used in proof of upper bound}) guarantees
that, with $\mathcal{W}-$probability one, for any $a>0$ arbitrarily
small and $k$ sufficiently large, 
\[
\frac{X_{s_{k}}^{f,\theta}\left(x_{j}^{(n_{k}+1)}\right)}{\Sigma_{s_{k}}^{f}}>\left(1-a\right)\sqrt{2\nu},\quad\left(\dagger\right)
\]
where $x_{j}^{(n_{k}+1)}$ is the center of the lattice cell (at the
$\left(n_{k}+1\right)$st level) where $y_{0}$ lies, i.e., $y_{0}\in\overline{S\left(x_{j}^{\left(n_{k}+1\right)},2^{-\left(n_{k}+1\right)^{2}}\right)}$.
Equivalently, if we denote by $\mathcal{J}_{k}^{\theta}$ the set
of $x_{j}^{(n_{k}+1)},$ $j=1,\cdots,J_{n_{k}+1}$, such that $(\dagger)$
holds, then $\mathcal{W}-$almost surely 
\begin{equation}
D_{\liminf}^{f,\theta}\subseteq\bigcup_{K\geq1}\,\bigcap_{k\geq K}\,\bigcup_{j=1}^{J_{n_{k}+1}}\left\{ \overline{S\left(x_{j}^{(n_{k}+1)},2^{-\left(n_{k}+1\right)^{2}}\right)}:\,x_{j}^{(n_{k}+1)}\in\mathcal{J}_{k}^{\theta}\right\} .\label{eq:covering of steep point set}
\end{equation}
Meanwhile, for all sufficiently large $k$'s,
\begin{equation}
\begin{split}\mathcal{W}\left(x_{j}^{(n_{k}+1)}\in\mathcal{J}_{k}^{\theta}\right) & \leq C\exp\left[-\nu\Sigma_{s_{k}}^{f}\left(1-a\right)^{2}\right]\leq Cs_{k}^{\nu c_{f}^{\prime}\left(1-a\right)^{2}}.\end{split}
\label{eq:prob of cell center ineq upper bound}
\end{equation}

If $\bar{c}_{f}\in(1,\infty]$, by choosing $c_{f}^{\prime}$ sufficiently
close to $\bar{c}_{f}$ and $a$ sufficiently small, one can always
make
\[
c_{f}^{\prime}\left(1-a\right)^{2}>1+a.
\]
Therefore, (\ref{eq:prob of cell center ineq upper bound}) implies
that for all sufficiently large $k$'s, 
\[
\begin{split}\mathcal{W}\left(D_{\liminf}^{f,\theta}\neq\emptyset\right) & \leq\mathcal{W}\left(\bigcup_{K\geq1}\,\bigcap_{k\geq K}\,\bigcup_{j=1}^{J_{n_{k}+1}}\left\{ x_{j}^{\left(n_{k}+1\right)}\in\mathcal{J}_{k}^{\theta}\right\} \right)\\
 & \leq\sum_{K\geq1}\limsup_{K\leq k\nearrow\infty}J_{n_{k}+1}\mathcal{W}\left(x_{j}^{\left(n_{k}+1\right)}\in\mathcal{J}_{k}^{\theta}\right)\\
 & \leq C\sum_{K\geq1}\limsup_{K\leq k\nearrow\infty}2^{\nu\left(n_{k}+1\right)^{2}}s_{k}^{\nu\left(1+a\right)}=0.
\end{split}
\]
That is, $D_{\liminf}^{f,\theta}=\emptyset$ with $\mathcal{W}-$probability
one when $\bar{c}_{f}>1$. 

Next, assume that $\bar{c}_{f}\in(0,1]$. Note that for $\mathcal{W}-$almost
every $\theta\in\Theta$, the right hand side of (\ref{eq:covering of steep point set})
forms a covering of $D_{\liminf}^{f,\theta}$, and the diameter (under
the Euclidean metric) of $\overline{S\left(x_{j}^{\left(n_{k}+1\right)},2^{-\left(n_{k}+1\right)^{2}}\right)}$
is $2\sqrt{\nu}2^{-\left(n_{k}+1\right)^{2}}$. Thus, if $\mathcal{H}^{\eta}$
is the Hausdorff$-\eta$ measure for $\eta>0$, then
\[
\begin{split}\mathcal{H}^{\eta}\left(D_{\liminf}^{f,\theta}\right) & \leq\liminf_{k\nearrow\infty}\,\sum_{j=1,\cdots,J_{n_{k}+1},\,x_{j}^{\left(n_{k}+1\right)}\in\mathcal{J}_{k}}\,\left[2\sqrt{\nu}2^{-\left(n_{k}+1\right)^{2}}\right]^{\eta}\\
 & =C_{\eta}\,\liminf_{k\nearrow\infty}\,2^{-\eta\left(n_{k}+1\right)^{2}}\#\left(\mathcal{J}_{k}^{\theta}\right)
\end{split}
\]
for some constant $C_{\eta}>0$. Again, it follows from (\ref{eq:prob of cell center ineq upper bound})
that
\[
\begin{split}\mathbb{E}^{\mathcal{W}}\left[\mathcal{H}^{\eta}\left(D_{\liminf}^{f,\theta}\right)\right] & \leq C_{\eta}\,\liminf_{k\nearrow\infty}\,2^{-\eta\left(n_{k}+1\right)^{2}}\mathbb{E}^{\mathcal{W}}\left[\#\left(\mathcal{J}_{k}^{\theta}\right)\right]\\
 & \leq C_{\eta}\,\liminf_{k\nearrow\infty}\,2^{\left(\nu-\eta\right)\left(n_{k}+1\right)^{2}}\mathcal{W}\left(x_{j}^{\left(n_{k}+1\right)}\in\mathcal{J}_{k}^{\theta}\right)\\
 & \leq C_{\eta}\,\liminf_{k\nearrow\infty}\,2^{\left(\nu-\eta\right)\left(n_{k}+1\right)^{2}-\nu c_{f}^{\prime}\left(1-a\right)^{2}\left(n_{k}-1\right)^{2}}.
\end{split}
\]
Given any $\eta>\nu\left(1-\bar{c}_{f}\right)$, so long as $c_{f}^{\prime}$
is sufficiently close to $\bar{c}_{f}$ and $a$ is sufficiently close
to zero, we can make
\[
\eta>\nu-\nu c_{f}^{\prime}\left(1-a\right)^{2},
\]
in which case $\mathcal{W}-$almost surely $\mathcal{H}^{\eta}\left(D_{\liminf}^{f,\theta}\right)=0$
and hence $\dim_{\mathcal{H}}\left(D_{\liminf}^{f,\theta}\right)\leq\eta$.
Since $\eta$ can be arbitrarily close to $\nu\left(1-\bar{c}_{f}\right)$,
we conclude that, for $\mathcal{W}-$almost every $\theta\in\Theta$,
\[
\dim_{\mathcal{H}}\left(D_{\liminf}^{f,\theta}\right)\leq\nu\left(1-\bar{c}_{f}\right).
\]

To prove the claims on the sequential steep point set $SD^{f,\theta}$,
we follow exactly the same line of arguments as above, replacing $\left\{ s_{k}:k\geq1\right\} $
by $\left\{ r_{m}:m\geq1\right\} $, $\bar{c}_{f}$ by $c$ and $c_{f}^{\prime}$
by $c^{\prime}$ where $c^{\prime}<c$ but is arbitrarily close to
$c$. We will omit the repetitive details and turn our attention to
$D_{\limsup}^{f,\theta}$. 

Again, without out loss of generality, we can assume $\underline{c}_{f}>0$.
Let $c_{f}^{\prime\prime}\in\left(0,\underline{c}_{f}\right)$ be
arbitrarily close to $\underline{c}_{f}$, and $a>0$ be arbitrarily
small. Obviously, $\Sigma_{t}^{f}>c_{f}^{\prime\prime}\left(-\ln t\right)$
for all sufficiently small $t$'s. Meanwhile, for every $\theta$
and $y_{0}\in D_{\limsup}^{f,\theta}$, one can find a sequence $\left\{ u_{k}:k\geq1\right\} \subseteq\left(0,1\right)$
such that $u_{k}\searrow0$ as $k\nearrow\infty$, and 
\[
\frac{X_{u_{k}}^{f,\theta}\left(y_{0}\right)}{\Sigma_{u_{k}}^{f}}>\left(1-\frac{a}{4}\right)\sqrt{2\nu}\mbox{ for all sufficiently large }k\geq1.
\]
Similarly, define $n_{k}$ to be the unique integer such that 
\[
2^{-n_{k}^{2}}<u_{k}\leq2^{-\left(n_{k}-1\right)^{2}}.
\]
Even though, this time, the choice of $\left\{ u_{k}:k\geq1\right\} $
and $\left\{ n_{k}:k\geq1\right\} $ will depend on $\theta$ and
$y_{0}$, we can still make the arguments above work. Namely, notice
that the estimate (\ref{modulus estimate used in proof of upper bound})
still applies, so when $k$ is sufficiently large, 
\[
\frac{X_{u_{k}}^{f,\theta}\left(x_{j}^{\left(n_{k}+1\right)}\right)}{\Sigma_{u_{k}}^{f}}>\left(1-\frac{a}{2}\right)\sqrt{2\nu}
\]
where, again, $x_{j}^{\left(n_{k}+1\right)}$ is the center of the
cell (at the $\left(n_{k}+1\right)$st level) that contains $y_{0}$,
and hence 
\[
\sup_{t\in\left[2^{-n_{k}^{2}},2^{-\left(n_{k}-1\right)^{2}}\right]}\frac{X_{t}^{f,\theta}\left(x_{j}^{\left(n_{k}+1\right)}\right)}{\Sigma_{t}^{f}}>\left(1-\frac{a}{2}\right)\sqrt{2\nu}.\quad\left(\dagger\dagger\right)
\]
For each $n\geq1$, Denote by $\mathcal{K}_{n}^{\theta}$ the set
of $x_{j}^{(n+1)},$ $j=1,\cdots,J_{n+1}$, such that $(\dagger\dagger)$
holds (with $n_{k}$ replaced by $n$). Then, 
\[
D_{\limsup}^{f,\theta}\subseteq\bigcup_{K\geq1}\,\bigcap_{k\geq K}\,\bigcup_{n\geq k}\,\bigcup_{j=1}^{J_{n+1}}\left\{ \overline{S\left(x_{j}^{(n+1)},2^{-\left(n+1\right)^{2}}\right)}:\,x_{j}^{(n+1)}\in\mathcal{K}_{n}^{\theta}\right\} .
\]
It is easy to check, for example, by applying the standard entropy
method to the process
\[
\left\{ \frac{X_{t}^{f,\theta}\left(x_{j}^{\left(n+1\right)}\right)}{\Sigma_{t}^{f}}:\,t\in\left[2^{-n^{2}},2^{-\left(n-1\right)^{2}}\right]\right\} ,
\]
that there is a constant $C>0$ such that, for all $n\geq1$,
\[
\mathbb{E}^{\mathcal{W}}\left[\sup_{t\in\left[2^{-n^{2}},2^{-\left(n-1\right)^{2}}\right]}\frac{X_{t}^{f,\theta}\left(x_{j}^{\left(n+1\right)}\right)}{\Sigma_{t}^{f}}\right]\leq C\left(\Sigma_{2^{-\left(n-1\right)^{2}}}^{f}\right)^{-\frac{1}{2}}
\]
which\footnote{If we identify $t\rightsquigarrow\frac{X_{t}^{f,\theta}\left(x_{j}^{\left(n_{k}+1\right)}\right)}{\Sigma_{t}^{f}}$
with $\tau\rightsquigarrow\frac{B_{\tau}}{\tau}$ where $B_{\tau}$
is the standard Brownian motion, then by formulas on the distribution
of running maximum of a drifted Brownian motion, we can compute this
expectation exactly, but knowing the exact value is not necessary
for our purpose.} tends to zero as $n\nearrow\infty$ according to\textbf{ (d)} in
Definition \ref{def:class C }. Then, by the Borell-TIS inequality
(e.g., $\mathsection2$ of \cite{AT07}), for all sufficiently large
$n$'s, 
\[
\begin{split}\mathcal{W}\left(x_{j}^{(n+1)}\in\mathcal{K}_{n}^{\theta}\right) & =\mathcal{W}\left(\sup_{t\in\left[2^{-n^{2}},2^{-\left(n-1\right)^{2}}\right]}\frac{X_{t}^{f,\theta}\left(x_{j}^{\left(n+1\right)}\right)}{\Sigma_{t}^{f}}>\left(1-\frac{a}{2}\right)\sqrt{2\nu}\right)\\
 & \leq\exp\left\{ -\frac{\Sigma_{2^{-\left(n-1\right)^{2}}}^{f}}{2}\left[\left(1-\frac{a}{2}\right)\sqrt{2\nu}+o\left(1\right)\right]^{2}\right\} \\
 & \leq C2^{-\left(n-1\right)^{2}\nu c_{f}^{\prime\prime}\left(1-a\right)^{2}}.
\end{split}
\]
From here, we proceed in exactly the same way as we did earlier when
proving the upper bound of $\dim_{\mathcal{H}}\left(D_{\liminf}^{f,\theta}\right)$.
Details are omitted.
\end{proof}

\subsection{Lower Bounds}

We now proceed to the lower bounds for the Hausdorff dimension of
the exceptional sets we have studied in the previous subsection. Let
us summarize in the following proposition the estimates we would like
to prove. 
\begin{prop}
\label{prop:lower bound on D^f,theta}Given $f\in\mathcal{C}$, let
$\bar{c}_{f}$ and $\underline{c}_{f}$ be as defined in Theorem \ref{thm:main theorem hausdorff dimension}. 

(i) If $0<\underline{c}_{f}\leq\bar{c}_{f}\leq1$ then $\mathcal{W}-$almost
surely,
\[
\dim_{\mathcal{H}}\left(D_{\limsup}^{f,\theta}\right)\geq\dim_{\mathcal{H}}\left(D_{\liminf}^{f,\theta}\right)\geq\dim_{\mathcal{H}}\left(D^{f,\theta}\right)\geq\nu\left(1-2\bar{c}_{f}+\underline{c}_{f}\right),
\]
which completes the proof of Theorem \ref{thm:main theorem hausdorff dimension}.

(ii) Suppose that there exists a sequence $\left\{ r_{m}:m\geq1\right\} \subseteq(0,1]$
with $r_{m}\searrow0$ as $m\nearrow\infty$ such that
\[
\liminf_{m\nearrow\infty}\frac{\Sigma_{r_{m}}^{f}}{-\ln r_{m}}>0\quad\text{(}\underline{c}_{f}\text{ does not have to be positive though)}.
\]
If $SD^{f,\theta}$ is the sequential $f-$steep point set associated
with $\left\{ r_{m}:m\geq1\right\} $, then $\mathcal{W}-$almost
surely 
\[
\dim_{\mathcal{H}}\left(D_{\limsup}^{f,\theta}\right)\geq\dim_{\mathcal{H}}\left(SD^{f,\theta}\right)\geq\nu\left(1-2\bar{c}_{f}+\underline{c}_{f}\right).
\]

(iii) Suppose that there exists a sequence $\left\{ r_{m}:m\geq1\right\} \subseteq(0,1]$
such that
\begin{equation}
\lim_{m\nearrow\infty}\,\frac{m}{\Sigma_{r_{m}}^{f}}=0\text{ and }\lim_{m\nearrow\infty}\,\frac{\Sigma_{r_{m+1}}^{f}}{\Sigma_{r_{m}}^{f}}=1.\label{eq:assump on r_m}
\end{equation}
If $SD^{f,\theta}$ is the sequential $f-$steep point set associated
with $\left\{ r_{m}:m\geq1\right\} $, then $\mathcal{W}-$almost
surely 
\[
\dim_{\mathcal{H}}\left(SD^{f,\theta}\right)=\dim_{\mathcal{H}}\left(D_{\liminf}^{f,\theta}\right)=\dim_{\mathcal{H}}\left(D^{f,\theta}\right)=\nu\left(1-\bar{c}_{f}\right)
\]
and hence
\[
\dim_{\mathcal{H}}\left(D_{\limsup}^{f,\theta}\right)\geq\nu\left(1-\bar{c}_{f}\right).
\]
\end{prop}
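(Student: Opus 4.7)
The plan is to follow the multi-scale Frostman-measure strategy of \cite{HMP} and \cite{Chen_thick_point}: for each $\eta$ strictly below the asserted lower bound construct a random Borel measure $\mu^{\theta}$ supported on the appropriate steep-point set and with finite $\eta$-energy, then apply Frostman's lemma. Fix a scale sequence $\{t_n\}\subseteq(0,1]$ decreasing to $0$; in (ii) and (iii) take $t_n=r_n$, while in (i) pick $\{t_n\}$ so that $\Sigma^{f}_{t_n}/(-\ln t_n)\to\bar{c}_f$ and the scales are well separated. On $\overline{S(O,1)}$ define the exponentially tilted density
\[
M_n(x):=\exp\bigl(\sqrt{2\nu}\,X_{t_n}^{f,\theta}(x)-\nu\,\Sigma_{t_n}^{f}\bigr),\qquad \mu_n^{\theta}(dx):=M_n(x)\,dx.
\]
By Lemma \ref{lem:concentric cov}, $\{X_{t}^{f,\theta}(x):t\in(0,1]\}$ is a time-changed Brownian motion with clock $\Sigma_{t}^{f}$, so for every fixed $x$ the process $M_n(x)$ is a mean-one exponential martingale and $\mathbb{E}^{\mathcal{W}}[\mu_n^{\theta}(A)]$ equals the Lebesgue measure of $A$ for every Borel $A\subseteq\overline{S(O,1)}$.

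The argument splits into three steps. \textbf{Step 1 (non-degeneracy):} a uniform-in-$n$ control of $\mathbb{E}^{\mathcal{W}}[\mu_n^{\theta}(\overline{S(O,1)})^{2}]$ together with Paley--Zygmund produces a weak subsequential limit $\mu^{\theta}$ which is non-trivial with positive $\mathcal{W}$-probability. \textbf{Step 2 (support):} the size-biased Cameron--Martin shift coming from the tilt forces $\mu^{\theta}$-typical $x$ to satisfy $X_{t_n}^{f,\theta}(x)/\Sigma_{t_n}^{f}\to\sqrt{2\nu}$ along $\{t_n\}$; Lemma \ref{lem:modulus of X} propagates this to uniform convergence on dyadic sub-cubes, and in (i) the continuity modulus of Lemma \ref{lem:expectation of max non-concentric}, combined with $\underline{c}_f>0$, upgrades convergence along $\{t_n\}$ to the full continuous limit in $t$, giving $\operatorname{supp}(\mu^{\theta})\subseteq D^{f,\theta}$; in (ii) and (iii) one obtains $\operatorname{supp}(\mu^{\theta})\subseteq SD^{f,\theta}$ directly. \textbf{Step 3 (energy):} bound $\mathbb{E}^{\mathcal{W}}[\mathcal{E}_\eta(\mu_n^{\theta})]$ uniformly in $n$, which by Fubini equals $\iint |x-y|^{-\eta}\exp\bigl(2\nu\,\operatorname{Cov}(X_{t_n}^{f,\theta}(x),X_{t_n}^{f,\theta}(y))\bigr)\,dx\,dy$. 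From the Riemann--Stieltjes definition of $X_{t}^{f,\theta}$ and the inclusion/disjoint covariance formulas $C_{incl}$, $C_{disj}$ of Lemma \ref{lem:nonconcentric cov}, together with the asymptotics of $G$ in (\ref{eq:asymptotic of G in 2D}), one obtains up to bounded multiplicative errors
\[
\operatorname{Cov}\bigl(X_{t_n}^{f,\theta}(x),X_{t_n}^{f,\theta}(y)\bigr)\approx\Sigma^{f}_{|x-y|\vee t_n},
\]
since the cross-covariance of $\bar{\theta}_{s}(x)$ and $\bar{\theta}_{s'}(y)$ behaves like $G(s\vee s'\vee|x-y|)$ at small scales.

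The main obstacle is Step 3, which is where the liminf/limsup interplay determines the final exponents. The naive estimate $\operatorname{Cov}\leq\Sigma^{f}_{t_n}$ is far too weak; the correct power of $|x-y|$ emerges only after subtracting off the portion of the mean correction $\nu\Sigma^{f}_{t_n}$ contributed by scales between $|x-y|$ and $t_n$, which grows only at the \emph{liminf} rate $\underline{c}_f$ whereas $\Sigma^{f}_{t_n}$ saturates at the \emph{limsup} rate $\bar{c}_f$. This bookkeeping produces the effective singularity $|x-y|^{-\nu(2\bar{c}_f-\underline{c}_f)+o(1)}$, so that $\mathbb{E}^{\mathcal{W}}[\mathcal{E}_\eta(\mu_n^{\theta})]$ remains bounded in $n$ precisely when $\eta<\nu(1-2\bar{c}_f+\underline{c}_f)$, establishing (i) and (ii). In (iii), the slow-variation hypothesis $\Sigma_{r_{m+1}}^{f}/\Sigma_{r_m}^{f}\to 1$ effectively synchronises $\underline{c}_f$ with $\bar{c}_f$ along $\{r_m\}$, sharpening the above exponent to $-\nu\bar{c}_f+o(1)$ and extending the range of admissible $\eta$ up to $\nu(1-\bar{c}_f)$; the additional hypothesis $m/\Sigma_{r_m}^{f}\to 0$ powers a Borel--Cantelli argument along the sequence which upgrades a $\limsup\geq\sqrt{2\nu}$ conclusion on the support of $\mu^{\theta}$ to the honest limit required by Definition \ref{def:steep point} of sequential $f$-steep point.
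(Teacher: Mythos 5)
Your proposal replaces the paper's two-sided constrained-indicator construction with a GMC-style exponential tilt $M_n(x)=\exp(\sqrt{2\nu}X_{t_n}^{f,\theta}(x)-\nu\Sigma_{t_n}^f)$. This is a genuinely different route, but as written it has a gap in the energy step that cannot be repaired by the ``bookkeeping'' you describe.

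The core issue is in Step 3. For the tilted density the second-moment computation is exact:
\[
\mathbb{E}^{\mathcal{W}}\bigl[M_n(x)M_n(y)\bigr]
=\exp\Bigl(2\nu\,\mathrm{Cov}\bigl(X_{t_n}^{f,\theta}(x),X_{t_n}^{f,\theta}(y)\bigr)\Bigr),
\]
and, since the shared randomness of $X_{t_n}^{f,\theta}(x)$ and $X_{t_n}^{f,\theta}(y)$ lives at scales larger than $|x-y|$, this covariance is $\approx\Sigma^f_{|x-y|\vee t_n}$, which along the worst scales is $\approx\bar{c}_f\,(-\ln|x-y|)$. Hence the energy integrand has singularity $|x-y|^{-\eta-2\nu\bar{c}_f+o(1)}$, and the GMC energy bound is finite exactly when $\eta<\nu(1-2\bar{c}_f)$. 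There is no room to ``subtract off the portion of the mean correction contributed by scales between $|x-y|$ and $t_n$'': the normalizing term $\nu\Sigma_{t_n}^f$ has already fully cancelled in the displayed identity, and no further subtraction is available. So your construction, as written, falls short of the claimed $\nu(1-2\bar{c}_f+\underline{c}_f)$ by the full amount $\nu\underline{c}_f$.

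The paper obtains the extra $+\underline{c}_f$ precisely because its measure is not an exponential tilt but an indicator measure $\mu_n^{f,\theta}\propto\sum_j \mathcal{W}(\Phi_{x_j,n}^f)^{-1}\mathbb{I}_{\Xi_n^{f,\theta}}(x_j)\,\mathrm{vol}(\cdot\cap S(x_j,2^{-n^2}))$, where $\Phi_{x,n}^f$ is the two-sided event that $X_t^{f,\theta}(x)$ stays within $\sqrt{\Delta\Sigma_i^f}$ of the deterministic drift $\sqrt{2\nu}\Sigma_t^f$ on every scale window $i\leq n$. The key second-moment estimate (the paper's (\ref{eq:key estimate in lower bound off diag})) bounds the correlation ratio for $|x_j-x_k|\asymp 2^{-i^2}$ by
\[
\frac{1}{\mathcal{W}(\Phi_{x_k,i+1}^f)\,\mathcal{W}(P_{x_j,i}^f)\,\mathcal{W}(P_{x_j,i+1}^f)}
\leq\exp\Bigl(\nu\bigl[2\Sigma^f_{2^{-(i+1)^2}}-\Sigma^f_{2^{-(i-1)^2}}\bigr]+\text{small}\Bigr),
\]
and it is the \emph{lower} bound $\Sigma^f_{2^{-(i-1)^2}}\geq\utilde{c}\,(i-1)^2\ln 2$ that yields the effective exponent $\nu(2\tilde{c}-\utilde{c})$. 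This mechanism is intrinsic to constrained events: the denominator contains the probability of staying in a \emph{narrow} tube, which is small, so its reciprocal is \emph{large} and partially cancels the numerator. Your exponential martingale has $\mathbb{E}[M_n(x)]=1$ by design, so no such cancellation can appear. A truncated or ``modified'' chaos (restricting the trajectory to a tube, as in \cite{HMP} or the paper's $P^f_{x,n}$) could recover the exponent, but that is essentially the paper's construction and not what you have written.

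There is a secondary concern in Step 2: the tilt at a single terminal scale $t_n$ does push the mean of $X_s^{f,\theta}(x)$ to $\sqrt{2\nu}\Sigma_s^f$ for all $s\in[t_n,1]$ (by the independent-increments structure), but it does not constrain the trajectory to stay close to that drift at intermediate scales, which is exactly what the paper's events $P^f_{x,n}$ impose and what Lemma \ref{lem: subset of D^f,theta} uses to deduce $\Upsilon^{f,\theta}\subseteq D^{f,\theta}$. Without that control, the passage from convergence along $\{t_n\}$ to the full $\lim_{t\searrow 0}$ needed for $D^{f,\theta}$ (as opposed to merely $SD^{f,\theta}$) requires an additional argument that is not supplied in your sketch. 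Note that when $\bar{c}_f=\underline{c}_f$ (as in the classical log-correlated case), your GMC approach would give the sharp exponent, but the whole point of part (i) is to handle the case $\underline{c}_f<\bar{c}_f$, where the gap is real.
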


We only give a detailed proof of (i), because (ii) and (iii) can be
derived based on the same proof with minor changes. The main idea
in proving the lower bound in (i) is to create a setting in which
we can apply Frostman's lemma. To this end, we will need to carry
out several steps of preparations. We will start with a ``configuration''
of the problem that is easier to handle. Besides the condition $0<\underline{c}_{f}\leq\bar{c}_{f}\leq1$,
we will also assume that $1+\underline{c}_{f}>2\bar{c}_{f}$ (which
implies that $\bar{c}_{f}<1$) since otherwise the inequalities in
(i) hold trivially. Choose $\tilde{c}\in\left(\bar{c}_{f},1\right)$
and $\utilde{c}\in\left(0,\underline{c}_{f}\right)$ to be sufficiently
close to $\bar{c}_{f}$ and, respectively, $\underline{c}_{f}$, such
that
\begin{equation}
1+\utilde{c}>2\tilde{c}.\label{eq:assump on c_f and delta}
\end{equation}
Just as what we did for the upper bound, we will ``discretize''
the problem by considering the behaviors of the concerned quantities,
e.g., $X_{t}^{f,\theta}$ and $\Sigma_{t}^{f}$, when $t$ varies
along a specific sequence, say, $\left\{ 2^{-n^{2}}:n\geq0\right\} $.
Without loss of generality, we will assume that for all sufficiently
large $n$'s,
\[
\utilde{c}\left(n^{2}\ln2\right)\leq\Sigma_{2^{-n^{2}}}^{f}\leq\tilde{c}\left(n^{2}\ln2\right).
\]
To simplify the notation, we denote for every $\theta\in\Theta$,
$x\in\overline{S\left(O,1\right)}$ and $n\geq1$,
\[
\Delta\theta_{n}\left(x\right):=\bar{\theta}_{2^{-n^{2}}}\left(x\right)-\bar{\theta}_{2^{-\left(n-1\right)^{2}}}\left(x\right),
\]
 
\[
\Delta X_{n}^{f,\theta}\left(x\right):=X_{2^{-n^{2}}}^{f,\theta}\left(x\right)-X_{2^{-\left(n-1\right)^{2}}}^{f,\theta}\left(x\right),
\]
and 
\[
\Delta\Sigma_{n}^{f}:=\Sigma_{2^{-n^{2}}}^{f}-\Sigma_{2^{-\left(n-1\right)^{2}}}^{f}.
\]
Define $P_{x,n}^{f}$ to be the set of $\theta\in\Theta$ such that
\[
\sup_{t\in\left[2^{-n^{2}},2^{-\left(n-1\right)^{2}}\right]}\left|X_{t}^{f,\theta}\left(x\right)-X_{2^{-\left(n-1\right)^{2}}}^{f,\theta}\left(x\right)-\sqrt{2\nu}\left(\Sigma_{t}^{f}-\Sigma_{2^{-\left(n-1\right)^{2}}}^{f}\right)\right|\leq\sqrt{\Delta\Sigma_{n}^{f}},
\]
and set $\Phi_{x,n}^{f}:=\left(\bigcap_{i=1}^{n}P_{x,i}^{f}\right)$. 

The first ``ingredient'' we need is the probability estimate for
$P_{x,n}^{f}$ and $\Phi_{x,n}^{f}$, which can be obtained through
the Cameron-Martin formula.
\begin{lem}
\label{lem:analysis of the probability}For every $n\geq1$, $P_{x,i}^{f}$,
$i=1,\cdots,n$, are mutually independent. Moreover, there is a constant
$p\in\left(0,1\right)$ such that for every $n\geq1$, 
\begin{equation}
e^{-\nu\left(\Delta\Sigma_{n}^{f}\right)-\sqrt{2\nu}\sqrt{\Delta\Sigma_{n}^{f}}}p\leq\mathcal{W}\left(P_{x,n}^{f}\right)\leq e^{-\nu\left(\Delta\Sigma_{n}^{f}\right)+\sqrt{2\nu}\sqrt{\Delta\Sigma_{n}^{f}}}p\label{eq: estimate for P_x,n}
\end{equation}
and hence
\begin{equation}
e^{-\nu\Sigma_{2^{-n^{2}}}^{f}-\sqrt{2\nu}\sqrt{n\Sigma_{2^{-n^{2}}}^{f}}}p^{n}\leq\mathcal{W}\left(\Phi_{x,n}^{f}\right)\leq e^{-\nu\Sigma_{2^{-n^{2}}}^{f}+\sqrt{2\nu}\sqrt{n\Sigma_{2^{-n^{2}}}^{f}}}p^{n}.\label{eq:lower bound prob of Phi_x,n}
\end{equation}
\end{lem}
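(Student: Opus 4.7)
The plan is to exploit the Brownian-motion representation of $X_t^{f,\theta}(x)$: as noted right after Definition \ref{def:class C }, for each fixed $x$ the process $\{X_t^{f,\theta}(x):t\in(0,1]\}$ is a centered Gaussian process with independent increments (in the direction of decreasing $t$) and variance $\Sigma_t^f$, so under the deterministic time change $\tau=\Sigma_t^f$ the process $\tilde B_\tau:=X_{t(\tau)}^{f,\theta}(x)-X_1^{f,\theta}(x)$ is a standard Brownian motion on $[0,\infty)$ under $\mathcal{W}$. The two ingredients I need are (a) independent increments over disjoint $\tau$-intervals, and (b) the Cameron--Martin (Girsanov) formula for shifting by a constant drift.

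The independence claim is immediate from (a): the event $P_{x,i}^f$ is measurable with respect to the $\sigma$-algebra generated by the increments of $\tilde B$ on $[\Sigma_{2^{-(i-1)^2}}^f,\Sigma_{2^{-i^2}}^f]$, and these $\tau$-intervals are pairwise disjoint, so $\{P_{x,i}^f\}_{i=1}^n$ are mutually independent and $\mathcal{W}(\Phi_{x,n}^f)=\prod_{i=1}^n\mathcal{W}(P_{x,i}^f)$.

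For the single-interval probability, write $T:=\Delta\Sigma_n^f$. After centering and time-changing, $P_{x,n}^f$ becomes
\[
A:=\Bigl\{\sup_{u\in[0,T]}\bigl|\tilde B_u-\sqrt{2\nu}\,u\bigr|\le\sqrt{T}\Bigr\}.
\]
Introduce the measure $\hat{\mathcal{W}}$ under which $\hat B_u:=\tilde B_u-\sqrt{2\nu}\,u$ is standard Brownian motion on $[0,T]$; by Girsanov,
\[
\frac{d\mathcal{W}}{d\hat{\mathcal{W}}}\bigg|_{\mathcal{F}_T}=\exp\bigl(-\sqrt{2\nu}\,\hat B_T-\nu T\bigr).
\]
Since $A=\{\sup_{u\in[0,T]}|\hat B_u|\le\sqrt{T}\}$ forces $|\hat B_T|\le\sqrt{T}$, the Radon--Nikodym density is trapped between $e^{-\nu T-\sqrt{2\nu T}}$ and $e^{-\nu T+\sqrt{2\nu T}}$; Brownian scaling gives $\hat{\mathcal{W}}(A)=\mathcal{W}(\sup_{s\in[0,1]}|W_s|\le 1)=:p\in(0,1)$, a universal constant depending only on nothing more than the law of Brownian motion. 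Combining these yields
\[
p\,e^{-\nu\Delta\Sigma_n^f-\sqrt{2\nu\Delta\Sigma_n^f}}\le\mathcal{W}(P_{x,n}^f)\le p\,e^{-\nu\Delta\Sigma_n^f+\sqrt{2\nu\Delta\Sigma_n^f}},
\]
which is (\ref{eq: estimate for P_x,n}).

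Multiplying over $i=1,\ldots,n$ and using the telescoping identity $\sum_{i=1}^n\Delta\Sigma_i^f=\Sigma_{2^{-n^2}}^f-\Sigma_1^f$ produces the drift factor $e^{-\nu\Sigma_{2^{-n^2}}^f}$, with the boundary constant $\Sigma_1^f$ absorbed into a redefined $p$. The main obstacle --- essentially the only non-bookkeeping step --- is aggregating the $n$ square-root corrections into a single $\sqrt{n\,\Sigma_{2^{-n^2}}^f}$; this is handled by Cauchy--Schwarz,
\[
\sum_{i=1}^n\sqrt{\Delta\Sigma_i^f}\le\sqrt{n\sum_{i=1}^n\Delta\Sigma_i^f}\le\sqrt{n\,\Sigma_{2^{-n^2}}^f},
\]
giving (\ref{eq:lower bound prob of Phi_x,n}). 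This sharpness (rather than the crude $\sum_i\sqrt{\Delta\Sigma_i^f}\le n\sqrt{\Sigma_{2^{-n^2}}^f}$) is precisely what makes the lower bound of the correct order to drive the subsequent Frostman-type construction and to deliver the Hausdorff-dimension lower bound $\nu(1-2\bar c_f+\underline c_f)$.
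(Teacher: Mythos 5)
Your proof is correct and follows essentially the same route as the paper: independence of $\tau$-disjoint increments, a change of measure to remove the $\sqrt{2\nu}$ drift (you phrase it as Girsanov for the time-changed Brownian motion, the paper as the Cameron--Martin formula on $\Theta$, but these are the same computation and yield identical bounds), the observation that on the event the exponent of the density is trapped in $[-\sqrt{2\nu\,\Delta\Sigma_n^f},\,\sqrt{2\nu\,\Delta\Sigma_n^f}]$, and finally Cauchy--Schwarz to aggregate the $n$ square-root error terms into $\sqrt{n\,\Sigma_{2^{-n^2}}^f}$. One small remark: there is no boundary constant to absorb into $p$, since $\Sigma_1^f=\int_1^1 f^2\,dG=0$, so $\sum_{i=1}^n\Delta\Sigma_i^f=\Sigma_{2^{-n^2}}^f$ exactly.
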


\begin{proof}
For each $n\geq1$, the independence of $P_{x,j}^{f}$, $j=1,\cdots,n$,
is obvious from the fact that $X_{t}^{f,\theta}\left(x\right)$ has
independent increments as $t$ decreases. We only need to show (\ref{eq: estimate for P_x,n}),
since (\ref{eq:lower bound prob of Phi_x,n}) follows trivially from
(\ref{eq: estimate for P_x,n}), the Cauchy inequality and the independence.
For each $n\geq1$ and $x\in\overline{S\left(O,1\right)}$, assume
that $h$ is the unique element in $H$ such that the corresponding
Paley-Wiener integral is given by $\mathcal{I}\left(h\right)\left(\theta\right)=-\Delta X_{n}^{f,\theta}\left(x\right)$.
Then, for $t\in\left[2^{-n^{2}},2^{-\left(n-1\right)^{2}}\right]$,
\[
\left(h,h_{\bar{\sigma}_{t}^{x}}\right)_{H}=\mathbb{E}^{\mathcal{W}}\left[\mathcal{I}\left(h\right)\left(\theta\right)\bar{\theta}_{t}\left(x\right)\right]=-\int_{2^{-\left(n-1\right)^{2}}}^{t}f\left(s\right)dG\left(s\right),
\]
which implies that 
\[
\begin{split}X_{t}^{f,\theta+\sqrt{2\nu}h}-X_{2^{-\left(n-1\right)^{2}}}^{f,\theta+\sqrt{2\nu}h} & =X_{t}^{f,\theta}-X_{2^{-\left(n-1\right)^{2}}}^{f,\theta}-\sqrt{2\nu}\int_{2^{-\left(n-1\right)^{2}}}^{t}f^{2}\left(s\right)dG\left(s\right)\\
 & =X_{t}^{f,\theta}-X_{2^{-\left(n-1\right)^{2}}}^{f,\theta}-\sqrt{2\nu}\left(\Sigma_{t}^{f}-\Sigma_{2^{-\left(n-1\right)^{2}}}^{f}\right).
\end{split}
\]
Moreover, 
\[
\begin{split}\left\Vert h\right\Vert _{H}^{2} & =\mathbb{E}^{\mathcal{W}}\left[\left(\mathcal{I}\left(h\right)\left(\theta\right)\right)^{2}\right]=\mathbb{E}^{\mathcal{W}}\left[\left(-\Delta X_{n}^{f,\theta}\left(x\right)\right)^{2}\right]=\Delta\Sigma_{n}^{f}\end{split}
.
\]
Therefore, by the Cameron-Martin formula (e.g., $\mathsection8$ in
\cite{probability}), we get that\footnote{For an integrable random variable $Z$ on $\Theta$ and a measurable
set $A\subseteq\Theta$, ``$\mathbb{E}^{\mathcal{W}}\left[Z;A\right]$''
refers to $\int_{A}Zd\mathcal{W}$.}
\[
\begin{split}\mathcal{W}\left(P_{x,n}^{f}\right) & =\mathcal{W}\left(\sup_{t\in\left[2^{-n^{2}},2^{-\left(n-1\right)^{2}}\right]}\left|X_{t}^{f,\theta+\sqrt{2\nu}h}-X_{2^{-\left(n-1\right)^{2}}}^{f,\theta+\sqrt{2\nu}h}\right|\leq\sqrt{\Delta\Sigma_{n}^{f}}\right)\\
 & =\mathbb{E}^{\mathcal{W}}\left[\exp\left(\sqrt{2\nu}\mathcal{I}\left(h\right)\left(\theta\right)-\nu\left\Vert h\right\Vert _{H}^{2}\right);\sup_{t\in\left[2^{-n^{2}},2^{-\left(n-1\right)^{2}}\right]}\left|X_{t}^{f,\theta}-X_{2^{-\left(n-1\right)^{2}}}^{f,\theta}\right|\leq\sqrt{\Delta\Sigma_{n}^{f}}\right]\\
 & =e^{-\nu\Delta\Sigma_{n}^{f}}\mathbb{E}^{\mathcal{W}}\left[e^{\sqrt{2\nu}\mathcal{I}\left(h\right)\left(\theta\right)};\sup_{t\in\left[2^{-n^{2}},2^{-\left(n-1\right)^{2}}\right]}\left|X_{t}^{f,\theta}-X_{2^{-\left(n-1\right)^{2}}}^{f,\theta}\right|\leq\sqrt{\Delta\Sigma_{n}^{f}}\right].
\end{split}
\]
When the constraint in the right hand side above is satisfied, 
\[
\mathcal{I}\left(h\right)\left(\theta\right)\in\left[-\sqrt{\Delta\Sigma_{n}^{f}},\sqrt{\Delta\Sigma_{n}^{f}}\right].
\]
Meanwhile, the distribution of $\left\{ X_{t}^{f,\theta}-X_{2^{-\left(n-1\right)^{2}}}^{f,\theta}:t\in\left[2^{-n^{2}},2^{-\left(n-1\right)^{2}}\right]\right\} $
is that of a standard Brownian motion $\left\{ B_{\tau}:0\leq\tau\leq T\right\} $
on a generic probability space, say, $\left(\Omega,\mathcal{F},\mathbb{P}\right)$,
with
\[
\tau:=\Sigma_{t}^{f}-\Sigma_{2^{-\left(n-1\right)^{2}}}^{f}\mbox{ and }T:=\Delta\Sigma_{n}^{f}.
\]
Thus, 
\[
\begin{split}\mathcal{W}\left(\sup_{t\in\left[2^{-n^{2}},2^{-\left(n-1\right)^{2}}\right]}\left|X_{t}^{f,\theta}-X_{2^{-\left(n-1\right)^{2}}}^{f,\theta}\right|\leq\sqrt{\Delta\Sigma_{n}^{f}}\right) & =\mathbb{P}\left(\sup_{\tau\in\left[0,T\right]}\left|B_{\tau}\right|\leq\sqrt{T}\right)\\
 & =\mathbb{P}\left(\sup_{\tau\in\left[0,1\right]}\left|B_{\tau}\right|\leq1\right):=p\in\left(0,1\right).
\end{split}
\]
We have finished the proof of (\ref{eq: estimate for P_x,n})
\end{proof}
The events $P_{x,n}^{f}$ and $\Phi_{x,n}^{f}$ concerning the discrete
family $\left\{ \Delta X_{n}^{f,\theta}\left(x\right):n\geq0\right\} $
help us ``design'' a specific collection of $f-$steep points, i.e.,
a subset of $D^{f,\theta}$, and whose Hausdorff measure or Hausdorff
dimension is ``convenient'' to study. Below we explain how to construct
such a subset of $D^{f,\theta}$, which is the second ``ingredient''
of the main proof. 

For every $n\ge0$, again we consider the lattice partition of $\overline{S\left(O,1\right)}$
with cell size $2^{-n^{2}}$ with 
\[
\left\{ x_{j}^{\left(n\right)}:\,j=1,\cdots,J_{n}:=2^{\nu n^{2}}\right\} 
\]
being the collection of all the cell centers. For every $\theta\in\Theta$,
we set 
\[
\Xi_{n}^{f,\theta}:=\left\{ x_{j}^{(n)}:\;1\leq j\leq J_{n},\,\theta\in\Phi_{x_{j}^{(n)},n}^{f}\right\} .
\]
\begin{lem}
\label{lem: subset of D^f,theta}Let $f\in\mathcal{C}$. For $\mathcal{W}-$almost
every $\theta\in\Theta$, 
\begin{equation}
D^{f,\theta}\supseteq\Upsilon^{f,\theta}:=\bigcap_{k\geq1}\,\overline{\bigcup_{n\geq k}\,\bigcup_{x\in\Xi_{n}^{f,\theta}}\,S\left(x,2^{-n^{2}}\right)}.\label{eq:subset of S}
\end{equation}
\end{lem}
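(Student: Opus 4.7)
The plan is to fix a $\mathcal{W}$-full event on which the continuity estimate (\ref{eq:modulus on X/Sigma}) of Lemma \ref{lem:modulus of X} holds, and then to show that on this event every $y_{0}\in\Upsilon^{f,\theta}$ satisfies (\ref{eq:def of steep point}), which yields $\Upsilon^{f,\theta}\subseteq D^{f,\theta}$.

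First I would unpack the definition of $\Upsilon^{f,\theta}$. Since $y_{0}$ lies in the closure of $\bigcup_{n\geq k}\bigcup_{x\in\Xi_{n}^{f,\theta}}S(x,2^{-n^{2}})$ for every $k$, a diagonal extraction yields a sequence $n_{k}\nearrow\infty$ and centers $x^{(k)}\in\Xi_{n_{k}}^{f,\theta}$ with $|y_{0}-x^{(k)}|\leq 2\sqrt{\nu}\,2^{-n_{k}^{2}}$. By the very definition of $\Xi_{n_{k}}^{f,\theta}$, the sampled $\theta$ lies in $\Phi_{x^{(k)},n_{k}}^{f}=\bigcap_{i=1}^{n_{k}}P_{x^{(k)},i}^{f}$, which on each dyadic piece $[2^{-i^{2}},2^{-(i-1)^{2}}]$ with $i\leq n_{k}$ pins the increments of $s\mapsto X_{s}^{f,\theta}(x^{(k)})$ to within $\sqrt{\Delta\Sigma_{i}^{f}}$ of the deterministic linear profile $s\mapsto\sqrt{2\nu}\,\Sigma_{s}^{f}$.

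The next step is to telescope. For small $t$, let $m$ be the unique integer with $t\in[2^{-m^{2}},2^{-(m-1)^{2}}]$, and pick $k$ large enough that $n_{k}\geq m+2$ (such $k$ exists since $n_{k}\nearrow\infty$). Summing the bounds coming from $P_{x^{(k)},i}^{f}$ over $i=1,\dots,m$ gives
\begin{equation*}
\Bigl|X_{t}^{f,\theta}(x^{(k)})-X_{1}^{f,\theta}(x^{(k)})-\sqrt{2\nu}\bigl(\Sigma_{t}^{f}-\Sigma_{1}^{f}\bigr)\Bigr|\leq\sum_{i=1}^{m}\sqrt{\Delta\Sigma_{i}^{f}}.
\end{equation*}
By Cauchy--Schwarz the right-hand side is at most $\sqrt{m\,\Sigma_{2^{-m^{2}}}^{f}}=O(m^{3/2})$, whereas the hypothesis $\underline{c}_{f}>0$ forces $\Sigma_{t}^{f}\geq\utilde{c}\,(m-1)^{2}\ln 2$. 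Dividing, $X_{t}^{f,\theta}(x^{(k)})/\Sigma_{t}^{f}\to\sqrt{2\nu}$ at a rate $O(m^{-1/2})$ that does not depend on $k$.

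Finally, I would transfer the control from $x^{(k)}$ to $y_{0}$. Because $|y_{0}-x^{(k)}|\leq 2\sqrt{\nu}\,2^{-n_{k}^{2}}\leq 2\sqrt{\nu}\,2^{-(m+2)^{2}}<2\sqrt{\nu}\,2^{-(m+1)^{2}}$, the triple $(x^{(k)},y_{0},t)$ lies in the set $B_{m}$, so on the full-measure event defined by (\ref{eq:modulus on X/Sigma}) one has $|X_{t}^{f,\theta}(y_{0})-X_{t}^{f,\theta}(x^{(k)})|/\Sigma_{t}^{f}\leq 2^{-m/8}$ for every sufficiently large $m$. Combined with the previous step,
\begin{equation*}
\Bigl|\frac{X_{t}^{f,\theta}(y_{0})}{\Sigma_{t}^{f}}-\sqrt{2\nu}\Bigr|=O(m^{-1/2})+2^{-m/8}\longrightarrow 0\quad\text{as }t\searrow 0,
\end{equation*}
which is exactly (\ref{eq:def of steep point}). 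The main subtlety is that the auxiliary point $x^{(k)}$ used to control a given scale $t$ varies with $t$ through $m$, but this presents no difficulty because (\ref{eq:modulus on X/Sigma}) is uniform over the whole cube; the other delicate point is that the cumulative drift error $\sum\sqrt{\Delta\Sigma_{i}^{f}}$ has to be negligible relative to $\Sigma_{t}^{f}$, and the hypothesis $\underline{c}_{f}>0$ is precisely what guarantees this.
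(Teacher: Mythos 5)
Your proof is correct and structurally mirrors the paper's: extract cell centers $x^{(k)}\in\Xi_{n_k}^{f,\theta}$ approaching $y_0$, telescope through the events $P_{x^{(k)},i}^{f}$ to bound the drift error by $\sum_{i\leq m}\sqrt{\Delta\Sigma_i^f}$, control this via Cauchy--Schwarz and the working assumption $\utilde{c}\,n^2\ln2\leq\Sigma_{2^{-n^2}}^f\leq\tilde{c}\,n^2\ln2$, then transfer from $x^{(k)}$ to $y_0$. The one substantive difference is the transfer step. The paper fixes $t$ and uses that, for fixed $t$ and $\theta$, $y\mapsto X_t^{f,\theta}(y)/\Sigma_t^f$ is a continuous function, so for that $t$ one may take $n_j$ as large as needed to make the transfer error arbitrarily small; since the drift bound is independent of which large $n_j$ one picks, this gives an estimate for each $t$ that tends to zero as $t\searrow0$. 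You instead invoke the uniform-in-$(x,y,t)$ modulus estimate (\ref{eq:modulus on X/Sigma}), obtaining a quantitative rate $2^{-m/8}$ on the almost-sure event from Lemma \ref{lem:modulus of X}. Both are valid; your version is more quantitative and uniform in $t$, at the cost of importing Lemma \ref{lem:modulus of X} (already needed for the upper bounds, so no real extra cost), while the paper's uses only the plain continuity of the regularized field in the spatial variable. One small imprecision: from $y_0$ lying in the closure $\overline{\bigcup_{n\geq k}\bigcup_{x\in\Xi_n^{f,\theta}}S(x,2^{-n^2})}$, the extraction naturally yields $n_k\geq k$ together with $|y_0-x^{(k)}|\leq 2\sqrt{\nu}\,2^{-k^2}$ (take the approximating point in the union to be within $\sqrt{\nu}\,2^{-k^2}$ of $y_0$), not the stronger $|y_0-x^{(k)}|\leq 2\sqrt{\nu}\,2^{-n_k^2}$ as you wrote; but requiring $k\geq m+2$ (hence $n_k\geq m+2$) still places $(x^{(k)},y_0,t)\in B_m$ and keeps the telescoping valid, so the conclusion is unaffected.
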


\begin{proof}
Let $y$ be an element from the right hand side of (\ref{eq:subset of S}).
It is easy to see that one can always find a subsequence $\left\{ n_{j}:j\geq1\right\} \subseteq\mathbb{N}$
with $n_{j}\nearrow\infty$ as $j\nearrow\infty$ and a sequence of
cell centers $\left\{ x^{(n_{j})}\in\Xi_{n_{j}}^{f,\theta}:\,j\geq1\right\} $
such that $\lim_{j\nearrow\infty}\left|y-x^{(n_{j})}\right|=0$. For
any $t\in(0,1]$, assume that $\ell\in\mathbb{N}$ is the unique integer
such that $2^{-\ell^{2}}\leq t<2^{-\left(\ell-1\right)^{2}}$. Then
we have that for every $n_{j}\geq\ell$, since $x^{(n_{j})}\in\Xi_{n_{j}}^{f,\theta}$,
\begin{equation}
\left|\frac{X_{t}^{f,\theta}\left(x^{\left(n_{j}\right)}\right)}{\Sigma_{t}^{f}}-\sqrt{2\nu}\right|\leq\frac{\sum_{i=1}^{\ell}\sqrt{\Delta\Sigma_{i}^{f}}}{\Sigma_{2^{-\left(\ell-1\right)^{2}}}^{f}}\leq\frac{\sqrt{\ell\Sigma_{2^{-\ell^{2}}}^{f}}}{\Sigma_{2^{-\left(\ell-1\right)^{2}}}^{f}}\leq\frac{\sqrt{\tilde{c}\ln2}\,\ell^{3/2}}{\utilde{c}\left(\ell-1\right)^{2}\ln2},\label{eq:an estimate used in finding subset of steep point}
\end{equation}
which can be arbitrarily small when $\ell$ is sufficiently large,
or equivalently, when $t$ is sufficiently small. Moreover, with $t$
and $\theta$ fixed, the function $y\rightsquigarrow\frac{X_{t}^{f,\theta}\left(y\right)}{\Sigma_{t}^{f}}$
is continuous and hence absolutely continuous on $\overline{S\left(O,1\right)}$,
so one can also make 
\[
\left|\frac{X_{t}^{f,\theta}\left(x^{(n_{j})}\right)}{\Sigma_{t}^{f}}-\frac{X_{t}^{f,\theta}\left(y\right)}{\Sigma_{t}^{f}}\right|
\]
arbitrarily small by choosing sufficiently large $n_{j}$. This implies
that $y\in D^{f,\theta}$. 
\end{proof}
Now we are ready to embark on the proof of Proposition \ref{prop:lower bound on D^f,theta}(i).
Briefly speaking, our goal is to apply Frostman's lemma to bound $\dim_{\mathcal{H}}\left(\Upsilon^{f,\theta}\right)$
from below, which requires us to find a non-trivial Borel measure
$\mu^{f,\theta}$ supported on $\Upsilon^{f,\theta}$ and to study
the $\alpha-$energy of $\mu^{f,\theta}$ for certain $\alpha>0$.
We will achieve our goal following two steps: first consider a naturally
chosen family of Borel measures $\mu_{n}^{f,\theta}$ supported on
$\overline{S\left(O,1\right)}$ for $n\geq1$, and verify that $\left\{ \mu_{n}^{f,\theta}:n\geq1\right\} $
is ``nice'' in the sense that $\mu_{n}^{f,\theta}$'s have uniformly
bounded first and second moments in their total mass, as well as uniformly
bounded expectation of $\alpha-$energy for certain $\alpha>0$; next
we combine a ``compactness'' argument and the Hewitt-Savage 0-1
law to extract, for $\mathcal{W}-$almost every $\theta$, a limit
measure $\mu^{f,\theta}$, and confirm that $\mu^{f,\theta}$ inherits
the nice properties from $\left\{ \mu_{n}^{f,\theta}:n\geq1\right\} $
in the sense that $\mu^{f,\theta}$ is a non-trivial measure supported
on $\Upsilon^{f,\theta}$ with finite $\alpha-$energy for $\alpha$
in a proper range.\\

\noindent \emph{Proof of Proposition \ref{prop:lower bound on D^f,theta}(i):}
We consider a family of random finite measures on $\overline{S\left(O,1\right)}$:
for each $n\geq1$ and $\theta\in\Theta$, define the measure 
\begin{equation}
\forall B\in\mathcal{B}\left(\overline{S\left(O,1\right)}\right),\quad\mu_{n}^{f,\theta}\left(B\right):=\frac{1}{J_{n}}\sum_{j=1}^{J_{n}}\frac{\mathbb{I}_{\Xi_{n}^{f,\theta}}\left(x_{j}^{(n)}\right)}{\mathcal{W}\left(\Phi_{x_{j}^{(n)},n}^{f}\right)}\frac{\mbox{vol}\left(B\cap S\left(x_{j}^{(n)},2^{-n^{2}}\right)\right)}{\mbox{vol}\left(S\left(x_{j}^{(n)},2^{-n^{2}}\right)\right)},\label{eq:def of mu_n_theta}
\end{equation}
where ``vol'' refers to the volume under the Lebesgue measure on
$\mathbb{R}^{\nu}$. It is clear that 
\begin{equation}
\begin{split}\forall n\geq1,\quad & \begin{split}\mathbb{E}^{\mathcal{W}}\left[\mu_{n}^{f,\theta}\left(\overline{S\left(O,1\right)}\right)\right] & =1\end{split}
.\end{split}
\label{eq:1st moment of total mass under mu_theta}
\end{equation}
Besides the uniformity in the first moment of $\mu_{n}^{f,\theta}\left(\overline{S\left(O,1\right)}\right)$,
our next goal is to show that its second moment is also bounded in
$n$, i.e., 
\begin{equation}
\sup_{n\geq1}\mathbb{E}^{\mathcal{W}}\left[\left(\mu_{n}^{f,\theta}\left(\overline{S\left(O,1\right)}\right)\right)^{2}\right]<\infty.\label{eq:bounded of 2nd of moment of mu_n}
\end{equation}
To this end, we write 
\begin{equation}
\begin{split}\mathbb{E}^{\mathcal{W}}\left[\left(\mu_{n}^{f,\theta}\left(\overline{S\left(O,1\right)}\right)\right)^{2}\right] & =\frac{1}{J_{n}^{2}}\,\sum_{j,k=1}^{J_{n}}\,\frac{\mathcal{W}\left(\Phi_{x_{j}^{(n)},n}^{f}\bigcap\Phi_{x_{k}^{(n)},n}^{f}\right)}{\mathcal{W}\left(\Phi_{x_{j}^{(n)},n}^{f}\right)\mathcal{W}\left(\Phi_{x_{k}^{(n)},n}^{f}\right)}\end{split}
.\label{eq:2nd moment of the total mass under mu_theta}
\end{equation}
By ( \ref{eq:lower bound prob of Phi_x,n}), when $j=k$,
\begin{equation}
\frac{\mathcal{W}\left(\Phi_{x_{j}^{(n)},n}^{f}\bigcap\Phi_{x_{k}^{(n)},n}^{f}\right)}{\mathcal{W}\left(\Phi_{x_{j}^{(n)},n}^{f}\right)\mathcal{W}\left(\Phi_{x_{k}^{(n)},n}^{f}\right)}=\frac{1}{\mathcal{W}\left(\Phi_{x_{j}^{(n)},n}^{f}\right)}\leq\exp\left(\nu\tilde{c}n^{2}\ln2+Cn^{3/2}\right).\label{key estimate in lower bound diag}
\end{equation}
When $2\cdot2^{-\left(i+1\right)^{2}}\leq\left|x_{j}^{(n)}-x_{k}^{(n)}\right|<2\cdot2^{-i^{2}}$
for some $i$ where $0\leq i\le n-1$ (without loss of generality,
we assume that $i$ is large), since the family 
\[
\left\{ \Delta X_{l}^{f,\theta}\left(x_{j}^{(n)}\right),\,\Delta X_{l^{\prime}}^{f,\theta}\left(x_{k}^{(n)}\right):\,1\leq l\leq i-1,\,i+2\leq l\leq n,\,i+2\leq l^{\prime}\leq n\right\} 
\]
is independent, if follows from (\ref{eq: estimate for P_x,n}) and
(\ref{eq:lower bound prob of Phi_x,n}) that 
\begin{equation}
\begin{split}\frac{\mathcal{W}\left(\Phi_{x_{j}^{(n)},n}^{f}\bigcap\Phi_{x_{k}^{(n)},n}^{f}\right)}{\mathcal{W}\left(\Phi_{x_{j}^{(n)},n}^{f}\right)\mathcal{W}\left(\Phi_{x_{k}^{(n)},n}^{f}\right)} & \leq\frac{\mathcal{W}\left(\left(\bigcap_{l=1}^{i-1}P_{x_{j}^{(n)},l}^{f}\right)\bigcap\left(\bigcap_{l=i+2}^{n}P_{x_{j}^{(n)},l}^{f}\right)\bigcap\left(\bigcap_{l^{\prime}=i+2}^{n}P_{x_{k}^{(n)},l^{\prime}}^{f}\right)\right)}{\mathcal{W}\left(\Phi_{x_{j}^{(n)},n}^{f}\right)\mathcal{W}\left(\Phi_{x_{k}^{(n)},n}^{f}\right)}\\
 & \leq\frac{1}{\mathcal{W}\left(\Phi_{x_{k}^{(n)},i+1}^{f}\right)\mathcal{W}\left(P_{x_{j}^{(n)},i}^{f}\right)\mathcal{W}\left(P_{x_{j}^{(n)},i+1}^{f}\right)}\\
 & \leq\exp\left[\nu\tilde{c}\left(1+i\right)^{2}\ln2+\nu\left(\Sigma_{2^{-\left(i+1\right)^{2}}}^{f}-\Sigma_{2^{-\left(i-1\right)^{2}}}^{f}\right)+Ci^{3/2}\right]\\
 & \le\exp\left[\nu\left(2\tilde{c}-\utilde{c}\right)i^{2}\ln2+Ci^{3/2}\right].
\end{split}
\label{eq:key estimate in lower bound off diag}
\end{equation}
Combining (\ref{key estimate in lower bound diag}) and (\ref{eq:key estimate in lower bound off diag})
yields that the right hand side of (\ref{eq:2nd moment of the total mass under mu_theta})
is no greater than
\[
\begin{split} & \frac{1}{J_{n}^{2}}\,\sum_{j=1}^{J_{n}}2^{\nu\tilde{c}n^{2}+Cn^{3/2}}+\frac{1}{J_{n}^{2}}\,\sum_{i=0}^{n-1}\sum_{\left\{ (j,k):2\cdot2^{-\left(i+1\right)^{2}}\leq\left|x_{j}^{(n)}-x_{k}^{(n)}\right|<2\cdot2^{-i^{2}}\right\} }2^{\nu\left(2\tilde{c}-\utilde{c}\right)i^{2}+Ci^{3/2}}\\
\leq & 2^{-\nu\left(1-\tilde{c}\right)n^{2}+Cn^{3/2}}+\sum_{i=0}^{n-1}2^{-\nu\left(1-2\tilde{c}+\utilde{c}\right)i^{2}+Ci^{3/2}},
\end{split}
\]
which is uniformly bounded in $n$ under the assumption (\ref{eq:assump on c_f and delta}).
So we have proved (\ref{eq:bounded of 2nd of moment of mu_n}).

Next, we turn our attention to the $\alpha-$energy, $\alpha>0$,
of the measure $\mu_{n}^{f,\theta}$ for every $\theta\in\Theta$
and every $n\geq1$, i.e., 
\[
I_{\alpha}\left(\mu_{n}^{f,\theta}\right):=\int_{\overline{S\left(O,1\right)}}\int_{\overline{S\left(O,1\right)}}\left|y-w\right|^{-\alpha}\mu_{n}^{f,\theta}\left(dy\right)\mu_{n}^{f,\theta}\left(dw\right).
\]
We need to verify that, whenever $\alpha$ is smaller than a critical
value which will be determined later, $\mu_{n}^{f,\theta}$ has uniformly
bounded expected $\alpha-$energy, i.e.,
\begin{equation}
\sup_{n\geq1}\mathbb{E}^{\mathcal{W}}\left[I_{\alpha}\left(\mu_{n}^{f,\theta}\right)\right]<\infty.\label{eq:boundedness of alpha energy}
\end{equation}
By (\ref{eq:def of mu_n_theta}), $\mathbb{E}^{\mathcal{W}}\left[I_{\alpha}\left(\mu_{n}^{f,\theta}\right)\right]$
is equal to 
\begin{equation}
\frac{1}{J_{n}^{2}}\sum_{j,k=1}^{J_{n}}\frac{\mathcal{W}\left(\Phi_{x_{j}^{(n)},n}^{f}\bigcap\Phi_{x_{k}^{(n)},n}^{f}\right)}{\mathcal{W}\left(\Phi_{x_{j}^{(n)},n}^{f}\right)\mathcal{W}\left(\Phi_{x_{k}^{(n)},n}^{f}\right)}\frac{\int_{\overline{S\left(x_{j}^{(n)},2^{-n^{2}}\right)}}\int_{\overline{S\left(x_{k}^{(n)},2^{-n^{2}}\right)}}\left|y-w\right|^{-\alpha}dydw}{\mbox{vol}\left(S\left(x_{j}^{(n)},2^{-n^{2}}\right)\right)\mbox{vol}\left(S\left(x_{k}^{(n)},2^{-n^{2}}\right)\right)}.\label{eq:expection of energy}
\end{equation}
Assume for now $\alpha<\nu\left(1-2\tilde{c}+\utilde{c}\right)$.
Obviously, for the diagonal terms in the summation in (\ref{eq:expection of energy}),
i.e., when $j=k$, we have that
\[
\frac{\int_{\overline{S\left(x_{j}^{(n)},2^{-n^{2}}\right)}}\int_{\overline{S\left(x_{k}^{(n)},2^{-n^{2}}\right)}}\left|y-w\right|^{-\alpha}dydw}{\mbox{vol}\left(S\left(x_{j}^{(n)},2^{-n^{2}}\right)\right)\mbox{vol}\left(S\left(x_{k}^{(n)},2^{-n^{2}}\right)\right)}=C2^{\alpha n^{2}}.
\]
If $j\neq k$, when $\left|x_{j}^{(n)}-x_{k}^{(n)}\right|\leq4\sqrt{\nu}2^{-n^{2}}$,
by possibly enlarging $C$, we can make
\[
\frac{\int_{\overline{S\left(x_{j}^{(n)},2^{-n^{2}}\right)}}\int_{\overline{S\left(x_{k}^{(n)},2^{-n^{2}}\right)}}\left|y-w\right|^{-\alpha}dydw}{\mbox{vol}\left(S\left(x_{j}^{(n)},2^{-n^{2}}\right)\right)\mbox{vol}\left(S\left(x_{k}^{(n)},2^{-n^{2}}\right)\right)}\leq C2^{\alpha n^{2}}\leq C\left|x_{j}^{(n)}-x_{k}^{(n)}\right|^{-\alpha};
\]
when $\left|x_{j}^{(n)}-x_{k}^{(n)}\right|>4\sqrt{\nu}2^{-n^{2}}$,
since for every $y^{\prime},w^{\prime}\in\overline{S\left(O,2^{-n^{2}}\right)}$,
\[
\left|y^{\prime}-w^{\prime}\right|\leq2\sqrt{\nu}2^{-n^{2}}\leq\frac{1}{2}\left|x_{j}^{(n)}-x_{k}^{(n)}\right|,
\]
we have that
\[
\left|x_{j}^{\left(n\right)}-x_{k}^{\left(n\right)}-(y^{\prime}-w^{\prime})\right|\geq\frac{1}{2}\left|x_{j}^{(n)}-x_{k}^{(n)}\right|,
\]
and hence
\[
\begin{split}\frac{\int_{\overline{S\left(x_{j}^{(n)},2^{-n^{2}}\right)}}\int_{\overline{S\left(x_{k}^{(n)},2^{-n^{2}}\right)}}\left|y-w\right|^{-\alpha}dydw}{\mbox{vol}\left(S\left(x_{j}^{(n)},2^{-n^{2}}\right)\right)\mbox{vol}\left(S\left(x_{k}^{(n)},2^{-n^{2}}\right)\right)} & =\frac{\int_{\overline{S\left(O,2^{-n^{2}}\right)}}\int_{\overline{S\left(O,2^{-n^{2}}\right)}}\left|x_{j}^{\left(n\right)}-x_{k}^{\left(n\right)}-(y^{\prime}-w^{\prime})\right|^{-\alpha}dy^{\prime}dw^{\prime}}{\mbox{vol}\left(S\left(x_{j}^{(n)},2^{-n^{2}}\right)\right)\mbox{vol}\left(S\left(x_{k}^{(n)},2^{-n^{2}}\right)\right)}\\
 & \leq C\left|x_{j}^{\left(n\right)}-x_{k}^{\left(n\right)}\right|^{-\alpha}.
\end{split}
\]
Also, recall from (\ref{key estimate in lower bound diag}) and (\ref{eq:key estimate in lower bound off diag})
that, when $j=k$, 
\[
\frac{\mathcal{W}\left(\Phi_{x_{j}^{(n)},n}^{f}\bigcap\Phi_{x_{k}^{(n)},n}^{f}\right)}{\mathcal{W}\left(\Phi_{x_{j}^{(n)},n}^{f}\right)\mathcal{W}\left(\Phi_{x_{k}^{(n)},n}^{f}\right)}\leq\exp\left(\nu\tilde{c}n^{2}\ln2+Cn^{3/2}\right);
\]
when $2\cdot2^{-\left(i+1\right)^{2}}\leq\left|x_{j}^{(n)}-x_{k}^{(n)}\right|<2\cdot2^{-i^{2}}$
for some $i=0,1,\cdots,n-1$,
\[
\begin{split}\frac{\mathcal{W}\left(\Phi_{x_{j}^{(n)},n}^{f}\bigcap\Phi_{x_{k}^{(n)},n}^{f}\right)}{\mathcal{W}\left(\Phi_{x_{j}^{(n)},n}^{f}\right)\mathcal{W}\left(\Phi_{x_{k}^{(n)},n}^{f}\right)} & \leq\exp\left[\nu\left(2\tilde{c}-\utilde{c}\right)i^{2}\ln2+Ci^{3/2}\right]\\
 & \leq\exp\left[-\nu\left(2\tilde{c}-\utilde{c}\right)\ln\left|x_{j}^{(n)}-x_{k}^{(n)}\right|+o\left(-\ln\left|x_{j}^{(n)}-x_{k}^{(n)}\right|\right)\right].
\end{split}
\]
As a result, the summation in (\ref{eq:expection of energy}) is no
greater than a constant multiple of
\[
\begin{split} & \frac{1}{J_{n}^{2}}\sum_{j=1}^{J_{n}}2^{\left(\alpha+\nu\tilde{c}\right)n^{2}+Cn^{3/2}}+\frac{1}{J_{n}^{2}}\sum_{\left\{ 1\leq j,k\leq J_{n},\,j\neq k\right\} }\left|x_{j}^{(n)}-x_{k}^{(n)}\right|^{-\alpha-\nu\left(2\tilde{c}-\utilde{c}\right)-o(1)}\\
\leq & 2^{-\left(\nu-\nu\tilde{c}-\alpha\right)n^{2}\ln2+Cn^{3/2}}+\frac{1}{J_{n}^{2}}\sum_{\left\{ 1\leq j,k\leq J_{n},\,j\neq k\right\} }\left|x_{j}^{(n)}-x_{k}^{(n)}\right|^{-\alpha-\nu\left(2\tilde{c}-\utilde{c}\right)-o(1)}\\
\longrightarrow & \iint_{\overline{S\left(O,1\right)}\times\overline{S\left(O,1\right)}}\left|x-y\right|^{-\alpha-\nu\left(2\tilde{c}-\utilde{c}\right)}dxdy<\infty\text{ as }n\nearrow\infty.
\end{split}
\]
Therefore, we have shown that (\ref{eq:boundedness of alpha energy})
holds whenever $\alpha<\nu\left(1-2\tilde{c}+\utilde{c}\right)$.

Now fix any $\alpha\in\left(0,\nu\left(1-2\tilde{c}+\utilde{c}\right)\right)$.
After showing (\ref{eq:bounded of 2nd of moment of mu_n}) and (\ref{eq:boundedness of alpha energy}),
we have two positive real numbers
\[
A_{1}:=\sup_{n\geq1}\,\mathbb{E}^{\mathcal{W}}\left[\left(\mu_{n}^{f,\theta}\left(\overline{S\left(O,1\right)}\right)\right)^{2}\right]\mbox{ and }A_{2}:=\sup_{n\geq1}\,\mathbb{E}^{\mathcal{W}}\left[I_{\alpha}\left(\mu_{n}^{f,\theta}\right)\right].
\]
For constants $c_{1}>1$, $c_{2}>0$, define the measurable subset
of $\Theta$ 
\[
\Lambda_{n}^{\alpha:}:=\left\{ \theta\in\Theta:\,\frac{1}{c_{1}}\leq\mu_{n}^{f,\theta}\left(\overline{S\left(O,1\right)}\right)\leq c_{1},\,I_{\alpha}\left(\mu_{n}^{f,\theta}\right)\leq c_{2}\right\} 
\]
and $\Lambda^{\alpha}:=\limsup_{n\rightarrow\infty}\Lambda_{n}^{\alpha}$.
Clearly, 
\[
\sup_{n\geq1}\,\mathcal{W}\left(I_{\alpha}\left(\mu_{n}^{f,\theta}\right)>c_{2}\right)\leq\frac{A_{2}}{c_{2}}\mbox{ and }\sup_{n\geq1}\,\mathcal{W}\left(\mu_{n}^{f,\theta}\left(\overline{S\left(O,1\right)}\right)>c_{1}\right)\leq\frac{1}{c_{1}}.
\]
Moreover, by (\ref{eq:1st moment of total mass under mu_theta}) and
the Paley-Zygmund inequality, 
\[
\begin{split}\sup_{n\geq1}\,\mathcal{W}\left(\mu_{n}^{f,\theta}\left(\overline{S\left(O,1\right)}\right)<\frac{1}{c_{1}}\right) & \leq1-\frac{\left(1-\frac{1}{c_{1}}\right)^{2}}{A_{1}}\end{split}
.
\]
As a consequence, by choosing $c_{1}$ and $c_{2}$ sufficiently large,
we can make 
\[
\mathcal{W}\left(\Lambda_{n}^{\alpha}\right)>\frac{\left(1-\frac{1}{c_{1}}\right)^{2}}{A_{1}}-\frac{1}{c_{1}}-\frac{A_{2}}{c_{2}}>\frac{1}{2A_{1}}
\]
for every $n\geq1$, and hence $\mathcal{W}\left(\Lambda^{\alpha}\right)\geq\frac{1}{2A_{1}}$. 

Finally, we are ready to extract a limit measure $\mu^{f,\theta}$
from the family $\left\{ \mu_{n}^{f,\theta}:n\geq0\right\} $. For
every $\theta\in\Lambda^{\alpha}$, there exists a subsequence $\left\{ n_{k}:k\geq0\right\} $
such that 
\[
\frac{1}{c_{1}}\leq\mu_{n_{k}}^{f,\theta}\left(\overline{S\left(O,1\right)}\right)\leq c_{1},\,I_{\alpha}\left(\mu_{n_{k}}^{f,\theta}\right)\leq c_{2}\mbox{ for all }k\geq0.
\]
Because $I_{\text{\ensuremath{\alpha}}}$, as a mapping from the space
of finite measures on $\overline{S\left(O,1\right)}$ to $\left[0,\infty\right]$,
is lower semi-continuous with respect to the weak topology, 
\[
\mathcal{M}:=\left\{ \mu\mbox{ Borel measure on }\overline{S\left(O,1\right)}:\,\frac{1}{c_{1}}\leq\mu\left(\overline{S\left(O,1\right)}\right)\leq c_{1},\,I_{\alpha}\left(\mu\right)\leq c_{2}\right\} 
\]
is compact, and hence there exists a Borel measure $\mu^{f,\theta}$
on $\overline{S\left(O,1\right)}$ such that $\mu_{n_{k}}^{f,\theta}$
weakly converges to $\mu^{f,\theta}$ along a subsequence of $\left\{ n_{k}:k\geq0\right\} $.
Thus, 
\[
\frac{1}{c_{1}}\leq\mu^{f,\theta}\left(\overline{S\left(O,1\right)}\right)\leq c_{1},\,I_{\alpha}\left(\mu^{f,\theta}\right)\leq c_{2}.
\]
Moreover, if $\Upsilon^{f,\theta}$ is the set defined in (\ref{eq:subset of S}),
then the weak convergence relation between $\left\{ \mu_{n_{k}}^{f,\theta}:k\geq1\right\} $
and $\mu^{f,\theta}$, combined with the fact that $\mu_{n_{k}}^{f,\theta}$
is supported on $\overline{\bigcup_{x\in\Xi_{n_{k}}^{f,\theta}}\,S\left(x,2^{-n_{k}^{2}}\right)}$
for every $k\geq1$, implies that 
\begin{align*}
\mu^{f,\theta}\left(\Upsilon^{f,\theta}\right) & \geq\limsup_{k\nearrow\infty}\mu_{n_{k}}^{f,\theta}\left(\overline{\bigcup_{x\in\Xi_{n_{k}}^{f,\theta}}\,S\left(x,2^{-n_{k}^{2}}\right)}\right)\geq\frac{1}{c_{1}}.
\end{align*}
This means that $\Upsilon^{f,\theta}$ has strictly positive $\alpha-$capacity,
i.e., 
\[
\sup\left\{ \left(\iint_{\Upsilon^{f,\theta}\times\Upsilon^{f,\theta}}\frac{\mu\times\mu\left(dydw\right)}{\left|y-w\right|^{\alpha}}\right)^{-1}:\,\mu\mbox{ is a probability measure on }\Upsilon^{f,\theta}\right\} >0.
\]
By Frostman's lemma, $\dim_{\mathcal{H}}\left(\Upsilon^{f,\theta}\right)\geq\alpha$
and hence $\dim_{\mathcal{H}}\left(D^{f,\theta}\right)\geq\alpha$.
Thus, we have established that
\[
\mathcal{W}\left(\dim_{\mathcal{H}}\left(D^{f,\theta}\right)\geq\alpha\right)\geq\mathcal{W}\left(\Lambda^{\alpha}\right)\geq\frac{1}{2A_{1}}.
\]
Recall from (\ref{eq:H_basis expansion}) that for $\mathcal{W}-$
almost every $\theta\in\Theta$, 
\[
\theta=\sum_{n\ge1}\mathcal{I}\left(h_{n}\right)\left(\theta\right)h_{n}
\]
where $\left\{ h_{n}:n\geq1\right\} $ is an orthonormal basis of
the Cameron-Martin space $H$ and $\left\{ \mathcal{I}\left(h_{n}\right):n\geq1\right\} $
under $\mathcal{W}$ forms a sequence of independent standard Gaussian
random variables. By a simple application of the Hewitt-Savage 0-1
law, we have that 
\[
\mathcal{W}\left(\dim_{\mathcal{H}}\left(D^{f,\theta}\right)\geq\alpha\right)=1.
\]

Finally, since $\alpha$ is arbitrary in $\left(0,\nu\left(1-2\tilde{c}+\utilde{c}\right)\right)$
with $\tilde{c}$ and $\utilde{c}$ being arbitrarily close to $\bar{c}_{f}$
and, respectively, $\underline{c}_{f}$ , we get the desired lower
bound 
\[
\mathcal{W}\left(\dim_{\mathcal{H}}\left(D^{f,\theta}\right)\geq\nu\left(1-2\bar{c}_{f}+\underline{c}_{f}\right)\right)=1.
\]
This completes the proof of Proposition \ref{prop:lower bound on D^f,theta}(i).$\qquad\qquad\qquad\qquad\hfill\hfill\hfill\square$

As for Proposition \ref{prop:lower bound on D^f,theta}(ii), we follow
exactly the same proof as above, except that, in the proof of Lemma
\ref{lem: subset of D^f,theta}, we argue that if for $m\geq1$ sufficiently
large, $\ell\in\mathbb{N}$ is the unique integer such that $2^{-\ell^{2}}\leq r_{m}<2^{-\left(\ell-1\right)^{2}}$,
then (\ref{eq:an estimate used in finding subset of steep point})
will be replaced by 
\[
\left|\frac{X_{r_{m}}^{f,\theta}\left(x^{\left(n_{j}\right)}\right)}{\Sigma_{r_{m}}^{f}}-\sqrt{2\nu}\right|\leq\frac{\sum_{i=1}^{\ell}\sqrt{\Delta\Sigma_{i}^{f}}}{\Sigma_{r_{m}}^{f}}\leq C\frac{\sqrt{\ell\cdot\Sigma_{2^{-\ell^{2}}}^{f}}}{-\ln r_{m}}\leq\frac{\sqrt{\tilde{c}\ln2}\ell^{3/2}}{\left(\ell-1\right)^{2}\ln2},
\]
where the second inequality relies on the hypothesis in Proposition
\ref{prop:lower bound on D^f,theta}(ii) that 
\[
\liminf_{m\nearrow\infty}\frac{\Sigma_{r_{m}}^{f}}{-\ln r_{m}}>0
\]
but does not require $\underline{c}_{f}>0$. Hence, Lemma \ref{lem: subset of D^f,theta}
still holds in the sense that
\[
SD^{f,\theta}\supseteq\Upsilon^{f,\theta}:=\bigcap_{k\geq1}\,\overline{\bigcup_{n\geq k}\,\bigcup_{x\in\Xi_{n}^{f,\theta}}\,S\left(x,2^{-n^{2}}\right)}.
\]
Therefore, the lower bound of $\dim_{\mathcal{H}}\left(\Upsilon^{f,\theta}\right)$
established in (i) also serves as a lower bound of $\dim_{\mathcal{H}}\left(SD^{f,\theta}\right)$.

As for Proposition \ref{prop:lower bound on D^f,theta}(iii), we first
observe that for every $m\geq1$ and $t\in(r_{m},r_{m-1}]$,
\[
\frac{\Sigma_{t}^{f}}{-\ln t}\leq\frac{\Sigma_{r_{m}}^{f}}{-\ln r_{m-1}}=\frac{\Sigma_{r_{m-1}}^{f}}{-\ln r_{m-1}}\cdot\frac{\Sigma_{r_{m}}^{f}}{\Sigma_{r_{m-1}}^{f}},
\]
so, by (\ref{eq:assump on r_m}),
\[
\limsup_{m\nearrow\infty}\frac{\Sigma_{r_{m}}^{f}}{-\ln r_{m}}=\bar{c}_{f}.
\]
Therefore, we only need to show that
\[
\dim_{\mathcal{H}}\left(D^{f,\theta}\right)\geq\nu\left(1-\bar{c}_{f}\right)
\]
and the rest follows from Proposition \ref{prop: upper bound on Hausdorff dim of limsup set}
and the relations (\ref{eq:sets relation 1}) and (\ref{eq:sets relation 2}).
To this end, we replace, everywhere in the proof of Proposition \ref{prop:lower bound on D^f,theta}(i),
$\left\{ 2^{-n^{2}}:n\geq0\right\} $ by $\left\{ r_{n}:n\geq0\right\} $
(with $r_{0}:=1$). The same line of arguments still works in this
case, even if we do not know specific values of $\left\{ r_{n}:n\geq0\right\} $.
Instead of repeating the entire proof, we will only point out the
steps that require changes, which also shows how the extra constraint
(\ref{eq:assump on r_m}) on $f$ and $\left\{ r_{n}:n\geq0\right\} $
can help. For example, this time (\ref{eq:an estimate used in finding subset of steep point})
will become, for sufficiently large $n$'s,
\[
\sup_{t\in(r_{n},r_{n-1}]}\left|\frac{X_{t}^{f,\theta}\left(x^{\left(n_{j}\right)}\right)}{\Sigma_{t}^{f}}-\sqrt{2\nu}\right|\leq\frac{\sum_{i=1}^{n}\sqrt{\Delta\Sigma_{i}^{f}}}{\Sigma_{r_{n-1}}^{f}}\leq C\frac{\sqrt{n\Sigma_{r_{n}}^{f}}}{\Sigma_{r_{n-1}}^{f}},
\]
which is arbitrarily small according to (\ref{eq:assump on r_m}),
so Lemma \ref{lem: subset of D^f,theta} concludes that 
\[
D^{f,\theta}\supseteq\Upsilon^{f,\theta}:=\bigcap_{k\geq1}\,\overline{\bigcup_{n\geq k}\,\bigcup_{x\in\Xi_{n}^{f,\theta}}\,S\left(x,r_{n}\right)}.
\]
Another change takes place in the estimate on
\[
\frac{\mathcal{W}\left(\Phi_{x_{j}^{(n)},n}^{f}\bigcap\Phi_{x_{k}^{(n)},n}^{f}\right)}{\mathcal{W}\left(\Phi_{x_{j}^{(n)},n}^{f}\right)\mathcal{W}\left(\Phi_{x_{k}^{(n)},n}^{f}\right)},\quad\text{ where }2r_{i+1}<\left|x_{j}^{\left(n\right)}-x_{k}^{\left(n\right)}\right|\leq2r_{i}\text{ for some }i=0,1,\cdots,n-1,
\]
and this is the key factor to the whole proof. In this case, when
$i$ is sufficiently large, according to our earlier derivation, we
have that 
\begin{align*}
\frac{\mathcal{W}\left(\Phi_{x_{j}^{(n)},n}^{f}\bigcap\Phi_{x_{k}^{(n)},n}^{f}\right)}{\mathcal{W}\left(\Phi_{x_{j}^{(n)},n}^{f}\right)\mathcal{W}\left(\Phi_{x_{k}^{(n)},n}^{f}\right)} & \leq\exp\left[\nu\Sigma_{r_{i+1}}^{f}+\nu\left(\Sigma_{r_{i+1}}^{f}-\Sigma_{r_{i-1}}^{f}\right)+o\left(\Sigma_{r_{i+1}}^{f}\right)\right]\\
 & \leq\exp\left[\left(\nu+o\left(1\right)\right)\Sigma_{r_{i}}^{f}\right]\leq r_{i}^{-\nu\tilde{c}+o\left(1\right)},
\end{align*}
which, again, is guaranteed by (\ref{eq:assump on r_m}). The rest
of the proof is exactly the same. 

\section{Examples of Steep Points of Gaussian Free Fields}

By varying the choices of $f\in\mathcal{C}$, the definition (\ref{eq:def of steep point})
of $f-$steep point leads to various exceptional sets of GFFs, including
the classical thick point sets for log-correlated GFFs as defined
in (\ref{eq:2D thick point}), as well as the counterpart for polynomial-correlated
GFFs as defined in (\ref{eq:thick point def}) and (\ref{eq:thick point along sequence}).
Theorem \ref{thm:main theorem hausdorff dimension} offers information
on the Hausdorff dimension of such sets.

\subsection{For Log-Correlated GFFs}

When $\nu=2$, the GFF we have studied in the previous sections is
log-correlated. Considering that the framework developed in $\mathsection3$
applies well when $\Sigma_{t}^{f}$ is ``comparable'' with $\left(-\ln t\right)$
as $t\searrow0$, a natural choice of $f$ is a constant function.
This is the main scheme in which we will discuss certain generalized
thick point sets for log-correlated GFFs.

\subsubsection{Thick Points, Revisited }

By setting $f\equiv\gamma$ for $\gamma\in\mathbb{R}\backslash\left\{ 0\right\} $,
it is clear that for every $\theta\in\Theta$, $t\in(0,1]$ and $x\in\overline{S\left(O,1\right)}$,
\[
X_{t}^{f,\theta}\left(x\right)=\gamma\left(\bar{\theta}_{t}\left(x\right)-\bar{\theta}_{1}\left(x\right)\right),
\]
and 
\[
\Sigma_{t}^{f}=\gamma^{2}\left(G\left(t\right)-G\left(1\right)\right).
\]
By (\ref{eq:asymptotic of G in 2D}), 
\[
c_{f}=\lim_{t\searrow0}\frac{\Sigma_{t}^{f}}{-\ln t}=\frac{\gamma^{2}}{2\pi},
\]
so $x$ is an $f-$steep point of $\theta$ if and only if 
\[
\lim_{t\searrow0}\frac{\bar{\theta}_{t}\left(x\right)}{-\ln t}=\frac{\gamma}{\pi}.
\]
In other words, when $\gamma>0$, according to the definition (\ref{eq:2D thick point}),
the set $D^{f.\theta}$ of $f-$steep points coincides with the set
$T^{\gamma,\theta}$ of \emph{$\gamma-$thick points.} 

Corollary \ref{cor:when sigma/ln limit exists} implies that for $\mathcal{W}-$almost
every $\theta\in\Theta$, if $\gamma^{2}>2\pi$, then
\begin{equation}
T^{\gamma,\theta}=D^{f,\theta}=D_{\liminf}^{f,\theta}=D_{\limsup}^{f,\theta}=\emptyset;\label{eq:log GFF thick point empty}
\end{equation}
if $0<\gamma^{2}\leq2\pi$, then
\begin{equation}
\dim_{\mathcal{H}}\left(T^{\gamma,\theta}\right)=\dim_{\mathcal{H}}\left(D^{f,\theta}\right)=\dim_{\mathcal{H}}\left(D_{\liminf}^{f,\theta}\right)=\dim_{\mathcal{H}}\left(D_{\limsup}^{f,\theta}\right)=2-\frac{\gamma^{2}}{\pi},\label{eq: log GFF thick point H dim}
\end{equation}
which agrees with the results obtained in \cite{HMP}. 

\subsubsection{Oscillatory Thick Points}

Besides the standard thick point set, the general framework of steep
point also allows us to study certain variations of this exceptional
set. Again, we fix a constant $\gamma\in(0,\sqrt{2\pi}]$. We have
already seen in (\ref{eq: log GFF thick point H dim}) that $\mathcal{W}-$almost
surely the set of $x\in\overline{S\left(O,1\right)}$ where
\[
\limsup_{t\searrow0}\frac{\bar{\theta}_{t}\left(x\right)}{-\ln t}\geq\frac{\gamma}{\pi}
\]
has Hausdorff dimension $2-\frac{\gamma^{2}}{\pi}$, and the same
fact holds for the set of $x\in\overline{S\left(O,1\right)}$ where
\[
\liminf_{t\searrow0}\frac{\bar{\theta}_{t}\left(x\right)}{-\ln t}\leq-\frac{\gamma}{\pi}
\]
due to the fact that $\mathcal{W}$ is invariant under the transformation
$\theta\rightsquigarrow-\theta$. However, if we require the two conditions
above to be met at the same time and set
\[
T_{oscil.}^{\gamma,\theta}:=\left\{ x\in\overline{S\left(O,1\right)}:\,\limsup_{t\searrow0}\frac{\bar{\theta}_{t}\left(x\right)}{-\ln t}\geq\frac{\gamma}{\pi}\text{ and }\liminf_{t\searrow0}\frac{\bar{\theta}_{t}\left(x\right)}{-\ln t}\le-\frac{\gamma}{\pi}\right\} ,
\]
then, intuitively, $T_{oscil.}^{\gamma,\theta}$ contains \emph{oscillatory
thick points} where $\bar{\theta}_{t}$ oscillates and achieves an
unusually large magnitude in both the positive and the negative directions.
One would expect that $T_{oscil.}^{\gamma,\theta}$ is much smaller
than either of the two sets mentioned above by imposing only one condition.
But we will show that, at least in terms of the Hausdorff dimension,
$T_{oscil.}^{\gamma,\theta}$ is as ``big'' as either of the two
sets.
\begin{prop}
\label{prop:oscil. log}If $\gamma\in\left[0,\sqrt{2\pi}\right]$,
then for $\mathcal{W}-$almost every $\theta\in\Theta$,
\[
\dim_{\mathcal{H}}\left(T_{oscil.}^{\gamma,\theta}\right)=2-\frac{\gamma^{2}}{\pi}.
\]
\end{prop}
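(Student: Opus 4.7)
The plan is to establish both bounds of $\dim_{\mathcal{H}}(T_{oscil.}^{\gamma,\theta})$ by applying the $f$-steep point framework with two different choices of test function. For the upper bound, observe that any $x\in T_{oscil.}^{\gamma,\theta}$ satisfies in particular $\limsup_{t\searrow 0}\bar{\theta}_t(x)/(-\ln t)\geq \gamma/\pi$. Taking the constant test function $g\equiv\gamma$, one has $X_t^{g,\theta}(x)=\gamma(\bar{\theta}_t(x)-\bar{\theta}_1(x))$ and $c_g=\gamma^2/(2\pi)$, so the inclusion $T_{oscil.}^{\gamma,\theta}\subseteq D_{\limsup}^{g,\theta}$ holds and Corollary \ref{cor:when sigma/ln limit exists} yields $\dim_{\mathcal{H}}(T_{oscil.}^{\gamma,\theta})\leq 2-\gamma^2/\pi$.

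For the lower bound, the idea is to design a sign-alternating test function whose $f$-steep points are automatically oscillatory thick points. I would fix a very rapidly decreasing sequence such as $r_m:=e^{-m!}$ (satisfying $(-\ln r_{m-1})/(-\ln r_m)=1/m\to 0$), set $r_0:=1$, and define $f(s):=(-1)^{k-1}\gamma$ for $s\in(r_k,r_{k-1}]$, $k\geq 1$. Since $|f|\equiv\gamma$ is constant, one has $\Sigma_t^f=\gamma^2(G(t)-G(1))$ and hence $c_f=\gamma^2/(2\pi)$, while the verification that $f\in\mathcal{C}$ is routine: the jump set $\mathcal{J}=\{r_m\}$ is extremely sparse, and properties (a)--(d) in Definition \ref{def:class C } follow directly. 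Corollary \ref{cor:when sigma/ln limit exists} then yields $\dim_{\mathcal{H}}(D^{f,\theta})=2-\gamma^2/\pi$, and it suffices to establish the inclusion $D^{f,\theta}\subseteq T_{oscil.}^{\gamma,\theta}$.

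The main obstacle is exactly this inclusion. From the telescoping identity
\[
X_{r_m}^{f,\theta}(x)=\sum_{k=1}^{m}(-1)^{k-1}\gamma\bigl(\bar{\theta}_{r_k}(x)-\bar{\theta}_{r_{k-1}}(x)\bigr),
\]
the $f$-steep condition $X_{r_m}^{f,\theta}(x)/\Sigma_{r_m}^f\to 2$ combined with $\Sigma_{r_{m-1}}^f/\Sigma_{r_m}^f\to 0$ would force
\[
(-1)^{m-1}\bigl(\bar{\theta}_{r_m}(x)-\bar{\theta}_{r_{m-1}}(x)\bigr)=\frac{\gamma(-\ln r_m)}{\pi}\bigl(1+o(1)\bigr).
\]
A simple induction applied to the same identity at lower indices then bounds $|\bar{\theta}_{r_{m-1}}(x)|=O(-\ln r_{m-1})=o(-\ln r_m)$, yielding $\bar{\theta}_{r_m}(x)/(-\ln r_m)\to (-1)^{m-1}\gamma/\pi$. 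The odd and even subsequences therefore supply, respectively, $\limsup_{t\searrow 0}\bar{\theta}_t(x)/(-\ln t)\geq\gamma/\pi$ and $\liminf_{t\searrow 0}\bar{\theta}_t(x)/(-\ln t)\leq -\gamma/\pi$, so $x\in T_{oscil.}^{\gamma,\theta}$. The delicate point is ensuring that the sign information carried by the oscillation of $f$ is effectively transmitted to $\bar{\theta}_{r_m}(x)$ itself rather than being absorbed into cumulative increments from earlier scales; this is precisely what the super-factorial decay of $\{r_m\}$ accomplishes by making each new term dominate the entire previous partial sum.
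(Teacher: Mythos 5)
Your proof is correct and follows the same overall route as the paper: the upper bound by embedding $T_{oscil.}^{\gamma,\theta}$ in a super-steep point set for the constant test function, and the lower bound by exhibiting a sign-alternating test function $f$ whose $f$-steep points are automatically oscillatory thick points, then appealing to Corollary~\ref{cor:when sigma/ln limit exists}. The one genuine point of departure is how the cumulative contribution from earlier scales is controlled. The paper invokes the almost-sure uniform bound on $\limsup_{t\searrow 0}|\bar{\theta}_t(x)|/G(t)$ (a consequence of the thick-point emptiness for $\gamma>\sqrt{2\pi}$) together with hypothesis~(\ref{eq: assumption on decay rate of r_m}) to conclude $\sum_{j<m}|\bar{\theta}_{r_j}(x)|=o(G(r_m))$ for every $x$; you instead extract the increment estimate $(-1)^{m-1}\bigl(\bar{\theta}_{r_m}(x)-\bar{\theta}_{r_{m-1}}(x)\bigr)=\frac{\gamma(-\ln r_m)}{\pi}(1+o(1))$ at every scale directly from the $f$-steep hypothesis at $x$ and then telescope, which is self-contained (no appeal to the thick-point result) and a little tidier. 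One small remark worth flagging: your explicit choice $r_m=e^{-m!}$ gives $n(-\ln r_{n-1})/(-\ln r_n)\equiv 1$, so it does \emph{not} satisfy~(\ref{eq: assumption on decay rate of r_m}) as stated in the paper; however, what both your argument and the paper's actually need is only $\sum_{j<m}(-\ln r_j)=o(-\ln r_m)$, which $e^{-m!}$ does satisfy since $\sum_{j<m}j!\leq 2(m-1)!=o(m!)$, so the proof goes through unaffected.
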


\begin{proof}
It is obvious that $T_{oscil.}^{\gamma,\theta}$ has a Hausdorff dimension
no larger than $2-\frac{\gamma^{2}}{\pi}$. To show the other direction,
we consider a sequence $\left\{ r_{n}:n\geq1\right\} \subseteq\left(0,1\right)$
such that $r_{n}\searrow0$ as $n\nearrow\infty$ and
\begin{equation}
\lim_{n\nearrow\infty}\frac{n\left(-\ln r_{n-1}\right)}{-\ln r_{n}}=0.\label{eq: assumption on decay rate of r_m}
\end{equation}
Set $r_{0}=1$, and define the piece-wise constant function
\[
f_{oscil.}:t\in(0,1]\mapsto f_{oscil.}\left(t\right):=\sum_{n\geq1}\mathbb{I}_{(r_{n},r_{n-1}]}\left(t\right)\left(-1\right)^{n}\gamma.
\]
It is still the case that $\Sigma_{t}^{f_{oscil.}}=\gamma^{2}\left(G\left(t\right)-G\left(1\right)\right)$,
but 
\[
X_{t}^{f_{oscil.},\theta}\left(x\right)=\gamma\sum_{n\geq0}\left(-1\right)^{n}\left(\bar{\theta}_{t\vee r_{n}}\left(x\right)-\bar{\theta}_{t\vee r_{n-1}}\left(x\right)\right).
\]

According to (\ref{eq:log GFF thick point empty}), for $\mathcal{W}-$almost
every $\theta$, 
\[
\sup_{x\in\overline{S\left(O,1\right)}}\limsup_{t\searrow0}\frac{\left|\bar{\theta}_{t}\left(x\right)\right|}{G\left(t\right)}\leq2\gamma.
\]
Combining the above with (\ref{eq: assumption on decay rate of r_m}),
it is obvious that, for every $x\in\overline{S\left(O,1\right)}$,
\[
\lim_{n\nearrow\infty}\frac{\sum_{j=1}^{n-1}\left|\bar{\theta}_{r_{j}}\left(x\right)\right|}{G\left(r_{n}\right)}=0.
\]
Therefore, if $x\in D^{f_{oscil.},\theta}$, then
\[
\lim_{k\nearrow\infty}\frac{X_{r_{2k}}^{f_{oscil.},\theta}}{\Sigma_{r_{2k}}^{f_{oscil.}}}=\lim_{k\nearrow\infty}\frac{X_{r_{2k-1}}^{f_{oscil.},\theta}}{\Sigma_{r_{2k-1}}^{f_{oscil.}}}=2,
\]
which, one can easily verify that, is equivalent to 
\[
\lim_{k\nearrow\infty}\frac{\bar{\theta}_{r_{2k}}\left(x\right)}{-\ln r_{2k}}=\frac{\gamma}{\pi}=-\lim_{k\nearrow\infty}\frac{\bar{\theta}_{r_{2k-1}}\left(x\right)}{-\ln r_{2k-1}}.
\]
We conclude that $\mathcal{W}-$almost surely $D^{f_{oscil.},\theta}\subseteq T_{oscil.}^{\gamma,\theta}$,
and hence
\[
\dim_{\mathcal{H}}\left(T_{oscil.}^{\gamma,\theta}\right)\geq2-\frac{\gamma^{2}}{\pi}.
\]
\end{proof}
The results above also apply to ``asymmetric'' oscillations of $\bar{\theta}_{t}\left(x\right)$.
Namely, for $\gamma_{1},\gamma_{2}>0$ and $\gamma_{1}\neq\gamma_{2}$,
we set 
\[
T_{oscil.}^{(\gamma_{1},-\gamma_{2}),\theta}:=\left\{ x\in\overline{S\left(O,1\right)}:\,\limsup_{t\searrow0}\frac{\bar{\theta}_{t}\left(x\right)}{-\ln t}\geq\frac{\gamma_{1}}{\pi}\text{ and }\liminf_{t\searrow0}\frac{\bar{\theta}_{t}\left(x\right)}{-\ln t}\le-\frac{\gamma_{2}}{\pi}\right\} .
\]
Then we have the following fact.
\begin{cor}
If $\gamma_{1},\gamma_{2}\in\left[0,\sqrt{2\pi}\right]$, then for
$\mathcal{W}-$almost every $\theta\in\Theta$,
\[
\dim_{\mathcal{H}}\left(T_{oscil.}^{(\gamma_{1},-\gamma_{2}),\theta}\right)=2-\frac{\gamma_{1}^{2}\vee\gamma_{2}^{2}}{\pi}.
\]
\end{cor}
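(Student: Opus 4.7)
My plan is to bracket $\dim_{\mathcal{H}}(T_{oscil.}^{(\gamma_{1},-\gamma_{2}),\theta})$ between matching upper and lower bounds, both of which follow directly from results already established in the paper. By the $\mathcal{W}$-invariance under $\theta\mapsto-\theta$ (which swaps the roles of $\limsup$ and $\liminf$, and hence of $\gamma_1$ and $\gamma_2$), I assume without loss of generality $\gamma_{1}\geq\gamma_{2}$, so that $\gamma_{1}^{2}\vee\gamma_{2}^{2}=\gamma_{1}^{2}$.

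For the upper bound, I use the trivial inclusion
\[
T_{oscil.}^{(\gamma_{1},-\gamma_{2}),\theta}\subseteq\Bigl\{x\in\overline{S\left(O,1\right)}:\,\limsup_{t\searrow0}\frac{\bar{\theta}_{t}\left(x\right)}{-\ln t}\geq\frac{\gamma_{1}}{\pi}\Bigr\}.
\]
Via the identification in $\mathsection 4.1.1$ of this latter set with $D_{\limsup}^{f,\theta}$ for $f\equiv\gamma_{1}$, equation (\ref{eq: log GFF thick point H dim}) shows that this set almost surely has Hausdorff dimension $2-\gamma_{1}^{2}/\pi$, from which the upper bound follows.

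For the lower bound, since $\gamma_{1}\geq\gamma_{2}$ gives $-\gamma_{1}/\pi\leq-\gamma_{2}/\pi$, any $x$ with $\liminf_{t\searrow0}\bar{\theta}_{t}(x)/(-\ln t)\leq-\gamma_{1}/\pi$ automatically satisfies $\liminf\leq-\gamma_{2}/\pi$. Consequently,
\[
T_{oscil.}^{\gamma_{1},\theta}=T_{oscil.}^{(\gamma_{1},-\gamma_{1}),\theta}\subseteq T_{oscil.}^{(\gamma_{1},-\gamma_{2}),\theta},
\]
and Proposition \ref{prop:oscil. log} applied with $\gamma=\gamma_{1}$ yields $\dim_{\mathcal{H}}\bigl(T_{oscil.}^{\gamma_{1},\theta}\bigr)=2-\gamma_{1}^{2}/\pi$ almost surely, supplying the matching lower bound.

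Because the argument relies only on set-theoretic inclusions and previously established results, I do not foresee any real obstacle; the sole bookkeeping point is to intersect the finitely many almost-sure events coming from (\ref{eq: log GFF thick point H dim}) and Proposition \ref{prop:oscil. log} (applied with $\gamma_{1}$ and, in the symmetric case, with $\gamma_{2}$) so as to obtain the conclusion on a single event of full $\mathcal{W}$-measure.
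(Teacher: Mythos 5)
Your proof is correct and follows the same two-sided strategy as the paper: bound the Hausdorff dimension from above by a one-sided exceptional set of dimension $2-\gamma_{1}^{2}/\pi$, and from below by the inclusion $T_{oscil.}^{\gamma_{1}\vee\gamma_{2},\theta}\subseteq T_{oscil.}^{(\gamma_{1},-\gamma_{2}),\theta}$ together with Proposition \ref{prop:oscil. log}. In fact your upper bound is stated a bit more carefully than the paper's: the paper invokes $\dim_{\mathcal{H}}\bigl(T_{oscil.}^{(\gamma_{1},-\gamma_{2}),\theta}\bigr)\leq\dim_{\mathcal{H}}\bigl(T_{oscil.}^{\gamma_{1},\theta}\bigr)\wedge\dim_{\mathcal{H}}\bigl(T_{oscil.}^{\gamma_{2},\theta}\bigr)$ as though it followed from set inclusion, but $T_{oscil.}^{(\gamma_{1},-\gamma_{2}),\theta}$ is \emph{not} contained in $T_{oscil.}^{\gamma_{1}\vee\gamma_{2},\theta}$ (it contains it), whereas you correctly pass to the genuine supersets $\{x:\limsup\bar{\theta}_{t}(x)/(-\ln t)\geq\gamma_{1}/\pi\}$ (identified with $D_{\limsup}^{f,\theta}$, $f\equiv\gamma_{1}$), which does justify the bound.
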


\begin{proof}
On one hand, it is clear that $\mathcal{W}-$almost surely
\[
\dim_{\mathcal{H}}\left(T_{oscil.}^{(\gamma_{1},-\gamma_{2}),\theta}\right)\leq\dim_{\mathcal{H}}\left(T_{oscil.}^{\gamma_{1},\theta}\right)\wedge\dim_{\mathcal{H}}\left(T_{oscil.}^{\gamma_{2},\theta}\right)=2-\frac{\gamma_{1}^{2}\vee\gamma_{2}^{2}}{\pi}.
\]
On the other hand, since $T_{oscil.}^{\left(\gamma_{1},-\gamma_{2}\right),\theta}\supseteq T_{oscil.}^{\gamma,\theta}$
where $\gamma:=\gamma_{1}\vee\gamma_{2}$, $\mathcal{W}-$almost surely
\[
\dim_{\mathcal{H}}\left(T_{oscil.}^{(\gamma_{1},-\gamma_{2}),\theta}\right)\geq\dim_{\mathcal{H}}\left(T_{oscil.}^{\gamma,\theta}\right)=2-\frac{\gamma^{2}}{\pi}.
\]
\end{proof}

\subsection{For Polynomial-Correlated GFFs}

In the case when $\nu\geq3$, the GFF is polynomial-correlated with
the degree of the polynomial being $\nu-2$. In order to make $\Sigma_{t}^{f}$
comparable with $\left(-\ln t\right)$ as $t\searrow0$, the natural
choice of $f$ is a constant multiple of $\frac{1}{\sqrt{G\left(t\right)}}$.
Namely, if for some $c\in\mathbb{R}\backslash\left\{ 0\right\} $,
\[
f:t\in(0,1]\mapsto f\left(t\right):=\frac{c}{\sqrt{G\left(t\right)}},
\]
then for every $\theta\in\Theta$, $t\in(0,1]$ and $x\in\overline{S\left(O,1\right)}$,
\[
\Sigma_{t}^{f}=c^{2}\ln\frac{G\left(t\right)}{G\left(1\right)}\text{ and }X_{t}^{f,\theta}\left(x\right)=\int_{1}^{t}\frac{c}{\sqrt{G\left(s\right)}}d\bar{\theta}_{s}\left(x\right),
\]
and hence by (\ref{eq:asymptotic of G in 2D}), 
\[
c_{f}=\lim_{t\searrow0}\frac{\Sigma_{t}^{f}}{-\ln t}=c^{2}\left(\nu-2\right).
\]
Since, 
\begin{equation}
\lim_{t\searrow0}\frac{X_{t}^{f,\theta}\left(x\right)}{\Sigma_{t}^{f}}=\sqrt{2\nu}\text{ if and only if }\lim_{t\searrow0}\frac{\int_{1}^{t}\frac{d\bar{\theta}_{s}\left(x\right)}{\sqrt{G\left(s\right)}}}{-\ln t}=c\sqrt{2\nu}\left(\nu-2\right),\label{eq: thin point}
\end{equation}
the result in Corollary \ref{cor:when sigma/ln limit exists} implies
that $\mathcal{W}-$almost surely,
\begin{equation}
\sup_{x\in\overline{S\left(O,1\right)}}\limsup_{t\searrow0}\frac{\int_{1}^{t}\frac{d\bar{\theta}_{s}\left(x\right)}{\sqrt{G\left(s\right)}}}{-\ln t}\leq\sqrt{2\nu\left(\nu-2\right)}\label{eq: thin point limsup upper bound}
\end{equation}
and for $c$ such that $c^{2}<\frac{1}{\nu-2}$,
\begin{equation}
\dim_{\mathcal{H}}\left(\left\{ x\in\overline{S\left(O,1\right)}:\lim_{t\searrow0}\frac{\int_{1}^{t}\frac{d\bar{\theta}_{s}\left(x\right)}{\sqrt{G\left(s\right)}}}{-\ln t}=c\sqrt{2\nu}\left(\nu-2\right)\right\} \right)=\nu\left[1-c^{2}\left(\nu-2\right)\right].\label{eq: thin point H dim}
\end{equation}
A point $x$ where the limit (\ref{eq: thin point}) is achieved is
certainly a location where $\bar{\theta}_{t}$ behaves unusually,
but it does not correspond to the behavior of $\bar{\theta}_{t}$
attaining unusually large values. In fact, we will argue in $\mathsection5.1$
that such a location is where the value of $\bar{\theta}_{t}$ ``tends''
to remain unusually low when measured by certain ``clock'' in $t$.
This is to say that $x$ should not be considered as a ``thick point'',
although the result (\ref{eq: thin point limsup upper bound}) can
help us acquire information on the thick point sets. 

\subsubsection{Thick Points, Revisited}

Recall from (\ref{eq:thick point def}) that, for $\gamma\geq0$,
the $\gamma-$ \emph{thick point set} of $\theta$ is 
\[
T^{\gamma,\theta}:=\left\{ x\in\overline{S\left(O,1\right)}:\;\limsup_{t\searrow0}\frac{\bar{\theta}_{t}\left(x\right)}{\sqrt{-G\left(t\right)\ln t}}\geq\sqrt{2\nu}\gamma\right\} .
\]
As we have reviewed in the Introduction, it is proven in \cite{Chen_thick_point}
that $\mathcal{W}-$almost surely, $T^{\gamma,\theta}=\emptyset$
when $\gamma>1$, and $\dim_{\mathcal{H}}\left(T^{\gamma,\theta}\right)=\nu\left(1-\gamma^{2}\right)$
when $\gamma\in[0,1]$. We also explained earlier that, due to the
higher-order singularity of the covariance function, the proof of
these results, especially the lower bound of $\dim_{\mathcal{H}}\left(T^{\gamma,\theta}\right)$,
was considerably more technical and involved than that in the log-correlated
case; in fact, the lower bound on $\dim_{\mathcal{H}}\left(T^{\gamma,\theta}\right)$
was established indirectly through treating the sequential $\gamma-$thick
point\emph{ }set\emph{ }
\[
ST^{\gamma,\theta}:=\left\{ x\in\overline{S\left(O,1\right)}:\;\lim_{n\nearrow\infty}\frac{\bar{\theta}_{r_{n}}\left(x\right)}{\sqrt{-G\left(r_{n}\right)\ln r_{n}}}=\sqrt{2\nu}\gamma\right\} ,
\]
where $\left\{ r_{n}:n\geq1\right\} \subseteq(0,1]$ is a sequence
satisfying (\ref{eq: assumption on decay rate of r_m}). A lower bound
of $\dim_{\mathcal{H}}\left(ST^{\gamma,\theta}\right)$ would lead
to a lower bound of $\dim_{\mathcal{H}}\left(T^{\gamma,\theta}\right)$. 

Below we will revisit $T^{\gamma,\theta}$ and $ST^{\gamma,\theta}$
under the framework of steep point. In particular, we can provide
a lower bound for $\dim_{\mathcal{H}}\left(ST^{\gamma,\theta}\right)$
as well as $\dim_{\mathcal{H}}\left(T^{\gamma,\theta}\right)$ using
Proposition \ref{prop:lower bound on D^f,theta} with a much shorter
and easier proof than the one given in \cite{Chen_thick_point}. However,
the lower bound we obtain here is not as tight as the one provided
in \cite{Chen_thick_point}, which suggests that, in order to obtain
the exact Hausdorff dimension of $T^{\gamma,\theta}$ and $ST^{\gamma,\theta}$,
one does need to carry out a careful analysis as in \cite{Chen_thick_point}. 
\begin{rem}
In fact, we believe that the same procedure as adopted in the study
of $ST^{\gamma,\theta}$ can be applied to give more accurate treatments
to the sequential steep point set $SD^{f,\theta}$ for any fast decaying
sequence $\left\{ r_{m}:m\geq1\right\} $; in other words, with (possibly)
heavier technicality, it is possible to determine the Hausdorff dimension
of $SD^{f,\theta}$ for $\left\{ r_{m}:m\geq1\right\} $ that is more
general than the one considered in Proposition \ref{prop:lower bound on D^f,theta}(iii),
from which one will produce an improved lower bound of $\dim_{\mathcal{H}}\left(D_{\limsup}^{f,\theta}\right)$
and may recover the tight lower bound of $\dim_{\mathcal{H}}\left(ST^{\gamma,\theta}\right)$.
This problem is currently being investigated in a separate work, and
we will not get into details here. 
\end{rem}

Take any sequence $\left\{ r_{n}:n\geq1\right\} \subseteq(0,1]$ such
that $r_{n}\searrow0$ as $n\nearrow\infty$ and (\ref{eq: assumption on decay rate of r_m})
is satisfied, and set $r_{0}=1$. Fix $\gamma\in\left[0,1/\sqrt{2}\right]$,
and define a function $g:(0,1]\rightarrow(0,\infty)$ as
\begin{equation}
t\in(0,1]\rightsquigarrow g\left(t\right):=\gamma\sum_{n\geq1}\sqrt{\frac{-\ln r_{n}}{G\left(r_{n}\right)}}\mathbb{I}_{(r_{n},r_{n-1}]}\left(t\right).\label{eq:def of g piecewise}
\end{equation}
It is easy to check that $g\in\mathcal{C}$ with $\bar{c}_{g}=\gamma^{2}$
and $\underline{c}_{g}=0$. In fact, due to (\ref{eq: assumption on decay rate of r_m}),
when $n$ is sufficiently large,
\begin{equation}
\begin{split}\Sigma_{r_{n}}^{g} & =\left(\gamma^{2}+o\left(1\right)\right)\left(-\ln r_{n}\right).\end{split}
\label{eq:sigma^g _rn-1}
\end{equation}
Therefore, by Proposition \ref{prop:lower bound on D^f,theta}(ii),
if $SD^{g,\theta}$ is the sequential $g-$steep point set associated
with the sequence $\left\{ r_{n}:n\geq1\right\} $, then $\mathcal{W}-$almost
surely
\[
\dim_{\mathcal{H}}\left(SD^{g,\theta}\right)\geq\nu\left(1-2\gamma^{2}\right).
\]

On the other hand, for every $\theta\in\Theta$, $x\in\overline{S\left(O,1\right)}$
and $n\geq1$,
\[
X_{r_{n}}^{g,\theta}\left(x\right)=\gamma\sum_{j=1}^{n}\sqrt{\frac{-\ln r_{j}}{G\left(r_{j}\right)}}\left(\bar{\theta}_{r_{j}}\left(x\right)-\bar{\theta}_{r_{j-1}}\left(x\right)\right).
\]
Using the result established in \cite{Chen_thick_point} that $T^{\gamma,\theta}=\emptyset$
with probability one for any $\gamma>1$, as well as the invariance
of $\mathcal{W}$ under the transformation $\theta\rightsquigarrow-\theta$,
we know that $\mathcal{W}-$almost surely, for every $x\in\overline{S\left(O,1\right)}$,
\begin{equation}
\liminf_{t\searrow0}\frac{\bar{\theta}_{t}\left(x\right)}{\sqrt{-G\left(t\right)\ln t}}\geq-\sqrt{2\nu}\text{ and }\limsup_{t\searrow0}\frac{\bar{\theta}_{t}\left(x\right)}{\sqrt{-G\left(t\right)\ln t}}\leq\sqrt{2\nu},\label{eq:thick point upper bound for polyn GFF-1}
\end{equation}
and hence 
\[
\left|\bar{\theta}_{r_{n}}\left(x\right)\right|\leq\left(\sqrt{2\nu}+1\right)\sqrt{-G\left(r_{n}\right)\ln r_{n}}
\]
for all but finitely many $n$'s. Combining (\ref{eq: assumption on decay rate of r_m}),
(\ref{eq:sigma^g _rn-1}) and (\ref{eq:thick point upper bound for polyn GFF-1})
leads to, for every $x\in\overline{S\left(O,1\right)}$,
\[
\lim_{n\nearrow\infty}\frac{1}{\Sigma_{r_{n}}^{g}}\left(\sqrt{\frac{-\ln r_{n}}{G\left(r_{n}\right)}}\left|\bar{\theta}_{r_{n-1}}\left(x\right)\right|+\sum_{j=1}^{n-1}\sqrt{\frac{-\ln r_{j}}{G\left(r_{j}\right)}}\left|\bar{\theta}_{r_{j}}\left(x\right)-\bar{\theta}_{r_{j-1}}\left(x\right)\right|\right)=0.
\]
It becomes clear that if $x\in SD^{g,\theta}$, then 
\[
\sqrt{2\nu}=\lim_{n\nearrow\infty}\frac{X_{r_{n}}^{g,\theta}\left(x\right)}{\Sigma_{r_{n}}^{g}}=\lim_{n\nearrow\infty}\gamma\frac{\sqrt{-\ln r_{n}}\bar{\theta}_{r_{n}}\left(x\right)}{\sqrt{G\left(r_{n}\right)}\Sigma_{r_{n}}^{g}},
\]
which, by (\ref{eq:sigma^g _rn-1}), implies that 
\[
\lim_{n\nearrow\infty}\frac{\bar{\theta}_{r_{n}}\left(x\right)}{\sqrt{-G\left(r_{n}\right)\ln r_{n}}}=\sqrt{2\nu}\gamma.
\]
Therefore, we conclude that $\mathcal{W}-$almost surely $ST^{\gamma,\theta}\supseteq D^{g,\theta}$
and 
\[
\dim_{\mathcal{H}}\left(T^{\gamma,\theta}\right)\geq\dim_{\mathcal{H}}\left(ST^{\gamma,\theta}\right)\geq\dim_{\mathcal{H}}\left(SD^{g,\theta}\right)\geq\nu\left(1-2\gamma^{2}\right).
\]

\subsubsection{Oscillatory Thick Points}

Similarly as discussed in the log-correlated case, we can also consider,
for $\theta$ being the generic element of a polynomial-correlated
GFF, the exceptional set given by the \emph{oscillatory thick points
}as, for $\gamma_{1},\gamma_{2}>0$,
\[
T_{oscil.}^{\left(\gamma_{1},-\gamma_{2}\right),\theta}:=\left\{ x\in\overline{S\left(O,1\right)}:\;\limsup_{t\searrow0}\frac{\bar{\theta}_{t}\left(x\right)}{\sqrt{-G\left(t\right)\ln t}}\geq\sqrt{2\nu}\gamma_{1}\text{ and }\liminf_{t\searrow0}\frac{\bar{\theta}_{t}\left(x\right)}{\sqrt{-G\left(t\right)\ln t}}\leq-\sqrt{2\nu}\gamma_{2}\right\} .
\]
\begin{prop}
\label{prop:oscil. polyn}If $\gamma_{1},\gamma_{2}\in\left[0,1/\sqrt{2}\right]$,
then for $\mathcal{W}-$almost every $\theta\in\Theta$, 
\[
\nu\left[1-2\left(\gamma_{1}^{2}\vee\gamma_{2}^{2}\right)\right]\leq\dim_{\mathcal{H}}\left(T_{oscil.}^{\left(\gamma_{1},-\gamma_{2}\right),\theta}\right)\leq\nu\left[1-\left(\gamma_{1}^{2}\vee\gamma_{2}^{2}\right)\right].
\]
\end{prop}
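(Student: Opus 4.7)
The plan is to treat the upper and lower bounds separately. The upper bound will follow from the inclusion $T_{oscil.}^{(\gamma_1,-\gamma_2),\theta} \subseteq T^{\gamma_1,\theta}$ together with the $\mathcal{W}$-invariance under $\theta \mapsto -\theta$ and the thick-point dimension result of \cite{Chen_thick_point}, which gives Hausdorff dimension $\nu(1-\gamma_2^2)$ for the mirror set $\{x:\liminf_{t\searrow 0} \bar\theta_t(x)/\sqrt{-G(t)\ln t}\leq -\sqrt{2\nu}\gamma_2\}$; intersecting these two bounds yields $\dim_{\mathcal{H}}(T_{oscil.}^{(\gamma_1,-\gamma_2),\theta}) \leq \nu[1-(\gamma_1^2 \vee \gamma_2^2)]$.

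For the lower bound, assume without loss of generality $\gamma := \gamma_1 \vee \gamma_2 > 0$ (the $\gamma = 0$ case is immediate from Fubini and the time-changed Brownian nature of $\bar\theta_t(x)$). Combining the sign-alternation device used in the proof of Proposition \ref{prop:oscil. log} with the amplitude scaling of Section 4.2.1, I would work with the piecewise-constant test function
\[
g_{osc}(t) := \gamma \sum_{n\geq 1} (-1)^n \sqrt{\frac{-\ln r_n}{G(r_n)}}\,\mathbb{I}_{(r_n, r_{n-1}]}(t),
\]
where $\{r_n\}_{n\geq 0} \subseteq (0,1]$ with $r_0 = 1$ is chosen to satisfy (\ref{eq: assumption on decay rate of r_m}). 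Since $|g_{osc}|$ coincides with $g$ from (\ref{eq:def of g piecewise}), the verification $g_{osc} \in \mathcal{C}$ proceeds exactly as in Section 4.2.1, and $\Sigma_{r_n}^{g_{osc}} = \Sigma_{r_n}^g = (\gamma^2 + o(1))(-\ln r_n)$ by (\ref{eq:sigma^g _rn-1}). A short computation using the polynomial asymptotics of $G$ on each interval $(r_n, r_{n-1}]$ will give $\bar c_{g_{osc}} = \gamma^2$ and $\underline c_{g_{osc}} = 0$, so Proposition \ref{prop:lower bound on D^f,theta}(ii) applied along $\{r_n\}$ produces
\[
\dim_{\mathcal{H}}(SD^{g_{osc},\theta}) \geq \nu(1-2\gamma^2) = \nu[1 - 2(\gamma_1^2 \vee \gamma_2^2)] \quad \mathcal{W}\text{-a.s.}
\]

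The concluding step is to establish $SD^{g_{osc},\theta} \subseteq T_{oscil.}^{(\gamma_1,-\gamma_2),\theta}$ almost surely by extracting the oscillation. Setting $a_j := (-1)^j \sqrt{-\ln r_j/G(r_j)}$ and applying Abel summation,
\[
X_{r_n}^{g_{osc},\theta}(x) = \gamma a_n \bar\theta_{r_n}(x) - \gamma a_1 \bar\theta_{r_0}(x) + \gamma \sum_{j=1}^{n-1}(a_j - a_{j+1})\bar\theta_{r_j}(x).
\]
Because the signs of $a_j$ alternate, $|a_j - a_{j+1}| = \sqrt{-\ln r_j/G(r_j)} + \sqrt{-\ln r_{j+1}/G(r_{j+1})}$; using the $\mathcal{W}$-a.s.\ uniform bound $|\bar\theta_{r_j}(x)| \leq C\sqrt{-G(r_j)\ln r_j}$ from (\ref{eq:thick point upper bound for polyn GFF-1}) combined with $G(r_{j-1})/G(r_j) = o(1)$, I would argue that each telescoping term is $o(-\ln r_n)$, so the whole remainder is $o(\Sigma_{r_n}^{g_{osc}})$. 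The leading term $\gamma a_n \bar\theta_{r_n}(x)$ then forces $(-1)^n\bar\theta_{r_n}(x)/\sqrt{-G(r_n)\ln r_n} \to \gamma\sqrt{2\nu}$, giving opposite limits along even and odd subsequences of $\{r_n\}$; since $\gamma \geq \gamma_1$ and $\gamma \geq \gamma_2$, this places $x$ in $T_{oscil.}^{(\gamma_1,-\gamma_2),\theta}$.

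I expect the Abel-summation estimate to be the main obstacle. Unlike the log-correlated setting of Proposition \ref{prop:oscil. log}, where $|\bar\theta_t|$ is automatically negligible against $G(t)$ itself, the polynomial-correlated a.s.\ bound $|\bar\theta_t(x)| = O(\sqrt{-G(t)\ln t})$ is of exactly the same order as the leading term to isolate, so the fast decay (\ref{eq: assumption on decay rate of r_m}) of $\{r_n\}$ must be exploited decisively---in particular via the polynomial vanishing of $G(r_{j-1})/G(r_j)$---to kill the cross terms.
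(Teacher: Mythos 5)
Your proposal is correct and follows essentially the same route as the paper. The upper bound via the inclusion into $T^{\gamma_1,\theta}$ (and its mirror by $\theta\rightsquigarrow-\theta$ invariance) combined with the \cite{Chen_thick_point} dimension bound, and the lower bound via the sign-alternating test function $g_{oscil.}$ and Proposition \ref{prop:lower bound on D^f,theta}(ii), are exactly the paper's two steps; your explicit Abel-summation bookkeeping simply spells out the calculation that the paper compresses into ``following exactly the same arguments as above'' by reference to the $SD^{g,\theta}$ analysis of $\mathsection4.2.1$.
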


\begin{proof}
First, recall that it is shown in \cite{Chen_thick_point} that for
any $\gamma\in[0,1]$, $\mathcal{W}-$almost surely
\[
\dim_{\mathcal{H}}\left(T^{\gamma,\theta}\right)\leq\nu\left(1-\gamma^{2}\right),
\]
which, combined with the invariance of $\mathcal{W}$ under $\theta\rightsquigarrow-\theta$,
implies that $\mathcal{W}-$almost surely
\[
\dim_{\mathcal{H}}\left(T_{oscil.}^{\left(\gamma_{1},-\gamma_{2}\right),\theta}\right)\leq\dim_{\mathcal{H}}\left(T^{\gamma_{1},\theta}\right)\wedge\dim_{\mathcal{H}}\left(T^{\gamma_{2},\theta}\right)\leq\nu\left[1-\left(\gamma_{1}^{2}\vee\gamma_{2}^{2}\right)\right].
\]
Next, set $\gamma:=\gamma_{1}\vee\gamma_{2}$. Choose the same $\left\{ r_{n}:n\geq0\right\} $
as in $\mathsection4.2.1$, consider the function
\[
g_{oscil.}:t\in(0,1]\mapsto g_{oscil.}\left(t\right):=\sum_{n=1}^{\infty}\left(-1\right)^{n}\gamma\sqrt{\frac{-\ln r_{n}}{G\left(r_{n}\right)}}\mathbb{I}_{(r_{n},r_{n-1}]}\left(t\right).
\]
and let $SD^{g_{oscil.},\theta}$ be the set of sequential $g_{oscil.}-$steep
points of $\theta$ associated with $\left\{ r_{n}:n\geq1\right\} $.
Following exactly the same arguments as above, one can show that,
for $\mathcal{W}-$almost every $\theta\in\Theta$, if $x\in SD^{g_{oscil.},\theta}$,
then 
\[
\lim_{k\nearrow\infty}\frac{\bar{\theta}_{r_{2k}}\left(x\right)}{\sqrt{-G\left(r_{2k}\right)\ln r_{2k}}}=\sqrt{2\nu}\gamma\text{ and }\lim_{k\nearrow\infty}\frac{\bar{\theta}_{r_{2k-1}}\left(x\right)}{\sqrt{-G\left(r_{2k-1}\right)\ln r_{2k-1}}}=-\sqrt{2\nu}\gamma,
\]
which means that $SD^{g_{oscil.},\theta}\subseteq T_{oscil.}^{\left(\gamma_{1},-\gamma_{2}\right),\theta}$
and hence
\[
\dim_{\mathcal{H}}\left(T_{oscil.}^{\left(\gamma_{1},-\gamma_{2}\right),\theta}\right)\geq\dim\left(SD^{g_{oscil.},\theta}\right)\geq\nu\left(1-2\gamma^{2}\right)=\nu\left[1-2\left(\gamma_{1}^{2}\vee\gamma_{2}^{2}\right)\right].
\]
\end{proof}

\subsubsection{Lasting Thick Points}

As we have seen so far in the treatment of thick points of $\theta$,
we have only invoked Proposition \ref{prop:lower bound on D^f,theta}(ii),
the results on $SD^{g,\theta}$. It is natural for one to ask whether
the analysis of $D^{g,\theta}$ leads to any new exceptional sets
of $\theta$, and if so, whether we can use Theorem \ref{thm:main theorem hausdorff dimension}
to get information on such sets. The answer to both questions is positive.
We propose to study the following exceptional set of $\theta$. For
$\gamma>0$, we call $x\in\overline{S\left(O,1\right)}$ a \emph{lasting
$\gamma-$thick point} of $\theta$ if 
\[
\limsup_{t\searrow0}\frac{\int_{1}^{t}\mathbb{I}_{[\sqrt{2\nu}\gamma,\infty)}\left(\frac{\bar{\theta}_{s}\left(x\right)}{\sqrt{-G\left(s\right)\ln s}}\right)dG\left(s\right)}{G\left(t\right)}>0,
\]
where ``$\mathbb{I}_{A}$'' refers to the indicator function of
a set $A\subseteq(0,\infty)$. Denote by $LT^{\gamma,\theta}$ the
collection of the lasting $\gamma-$thick points of $\theta$. Heuristically
speaking, a lasting $\gamma-$thick point is a location where the
behavior of $\text{\ensuremath{\bar{\theta}}}_{t}\left(x\right)$
achieving unusually large values, i.e., greater than or equal to $\sqrt{2\nu}\gamma\sqrt{-G\left(t\right)\ln t}$,
``lasts'' for a cumulative period of time that is a non-negligible
fraction of the total duration of the process. Of course, $LT^{\gamma,\theta}$
is an exceptional set and
\[
LT^{\gamma,\theta}\subseteq T^{\gamma,\theta},
\]
so $\mathcal{W}-$almost surely $LT^{\gamma,\theta}=\emptyset$ when
$\gamma>1$, and
\[
\dim_{\mathcal{H}}\left(LT^{\gamma,\theta}\right)\leq\dim_{\mathcal{H}}\left(T^{\gamma,\theta}\right)\leq\nu\left(1-\gamma^{2}\right)
\]
when $\gamma\in[0,1]$. Below we will derive a lower bound for $\dim_{\mathcal{H}}\left(LT^{\gamma,\theta}\right)$
by drawing the connection between $LT^{\gamma,\theta}$ and the steep
point set $D^{g,\theta}$ considered earlier. 
\begin{prop}
\label{prop:lasting thick point}If $\gamma\in\left[0,1/\sqrt{2}\right]$,
then for $\mathcal{W}-$almost every $\theta\in\Theta$, 
\[
\nu\left(1-2\gamma^{2}\right)\leq\dim_{\mathcal{H}}\left(LT^{\gamma,\theta}\right)\leq\nu\left(1-\gamma^{2}\right)
\]
\end{prop}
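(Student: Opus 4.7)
The upper bound $\dim_{\mathcal{H}}(LT^{\gamma,\theta}) \leq \nu(1-\gamma^{2})$ has already been recorded immediately before the statement of the proposition via the inclusion $LT^{\gamma,\theta} \subseteq T^{\gamma,\theta}$. For the lower bound the plan is to argue that the construction from $\mathsection 4.2.1$, which was used to bound $\dim_{\mathcal{H}}(ST^{\gamma,\theta})$ from below, actually produces a set that lies inside $LT^{\gamma,\theta}$. Specifically, pick a sequence $\{r_{n}\} \subseteq (0,1]$ decaying fast enough to satisfy \eqref{eq: assumption on decay rate of r_m}, let $g \in \mathcal{C}$ be the piecewise constant function from \eqref{eq:def of g piecewise}, and let $SD^{g,\theta}$ be the sequential $g$-steep point set along $\{r_{n}\}$. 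Proposition \ref{prop:lower bound on D^f,theta}(ii) yields $\dim_{\mathcal{H}}(SD^{g,\theta}) \geq \nu(1-2\gamma^{2})$ almost surely, so it will suffice to show, modulo a $\mathcal{W}$-null set, that every $y$ in the explicit subset $\Upsilon^{g,\theta}$ of $SD^{g,\theta}$ produced in the proof of that proposition is a lasting $\gamma$-thick point.

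Fix such a $y$. The computation in $\mathsection 4.2.1$, which relies on the super-exponential decay of $\{r_{n}\}$, gives $\bar{\theta}_{r_{n}}(y)/\sqrt{-G(r_{n})\ln r_{n}} \to \sqrt{2\nu}\gamma$. Pick any $c \in (0,1)$ and define $R_{n} \in (r_{n}, r_{n-1})$ by $G(R_{n}) = (1-c)G(r_{n})$, which lies strictly inside $(r_{n}, r_{n-1})$ for all large $n$. Since $t \mapsto \bar{\theta}_{t}(y)$ is a time-changed Brownian motion with clock $G$ by Lemma \ref{lem:concentric cov}, standard Brownian oscillation estimates together with a Borel-Cantelli argument give
\[
\sup_{s \in [r_{n}, R_{n}]}\bigl|\bar{\theta}_{s}(y) - \bar{\theta}_{r_{n}}(y)\bigr| \;=\; o\!\left(\sqrt{-G(r_{n})\ln r_{n}}\right) \quad \mathcal{W}\text{-a.s. as } n \to \infty,
\]
the negligibility using that $(-\ln r_{n})$ grows far faster than $n$ under \eqref{eq: assumption on decay rate of r_m}. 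Setting $G(S_{n}) = (1 - c/2)G(r_{n})$ and noting that $\sqrt{-G(s)\ln s} \leq \sqrt{1 - c/2}\,\sqrt{-G(r_{n})\ln r_{n}}\,(1 + o(1))$ for every $s \in [S_{n}, R_{n}]$, the $\gamma$-thick condition $\bar{\theta}_{s}(y) \geq \sqrt{2\nu}\gamma\sqrt{-G(s)\ln s}$ then holds throughout $[S_{n}, R_{n}]$ for all sufficiently large $n$. The $G$-measure of this sub-slab is $(c/2)G(r_{n})$, and since $\sum_{j < n} G(r_{j})$ is negligible next to $G(r_{n})$, evaluating the ratio in the definition of $LT^{\gamma,\theta}$ at $t = r_{n}$ yields
\[
\frac{\int_{1}^{r_{n}} \mathbb{I}_{[\sqrt{2\nu}\gamma,\infty)}\!\bigl(\bar{\theta}_{s}(y)/\sqrt{-G(s)\ln s}\bigr)\, dG(s)}{G(r_{n})} \;\geq\; \frac{c}{2} + o(1),
\]
whence $y \in LT^{\gamma,\theta}$.

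The main technical hurdle is to make the Brownian-oscillation estimate above uniform in $y \in \Upsilon^{g,\theta}$ rather than valid only for a prescribed point. This is done in the same spirit as the proof of Lemma \ref{lem: subset of D^f,theta}: one first establishes the oscillation bound, via Borel-Cantelli, along the countable family of lattice centers $\{x_{j}^{(n)}\}$ used to define $\Xi_{n}^{g,\theta}$, and then transfers it to nearby $y$ through the spatial continuity estimate of Lemma \ref{lem:expectation of max non-concentric}(i), exploiting that by construction every $y \in \Upsilon^{g,\theta}$ lies within $\sqrt{\nu}\,2^{-n_{j}^{2}}$ of some lattice center $x^{(n_{j})}$ for infinitely many indices $n_{j}$. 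Combining everything gives $\Upsilon^{g,\theta} \subseteq LT^{\gamma,\theta}$ almost surely, and hence $\dim_{\mathcal{H}}(LT^{\gamma,\theta}) \geq \nu(1-2\gamma^{2})$, which finishes the proof.
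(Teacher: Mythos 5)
Your upper bound is fine; the inclusion $LT^{\gamma,\theta}\subseteq T^{\gamma,\theta}$ gives it immediately, just as recorded before the statement. The lower bound, however, rests on an estimate that is false, and the failure is not the uniformity issue you flag at the end.

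You claim
\[
\sup_{s\in[r_{n},R_{n}]}\bigl|\bar{\theta}_{s}(y)-\bar{\theta}_{r_{n}}(y)\bigr|=o\!\left(\sqrt{-G(r_{n})\ln r_{n}}\right)
\]
for $y\in\Upsilon^{g,\theta}$ by treating $s\mapsto\bar{\theta}_{s}(y)$ as an ordinary time-changed Brownian motion. That is correct for an a priori fixed $y$, but $y\in\Upsilon^{g,\theta}$ is not an a priori fixed point: the events $P^{g}_{x,i}$ used to build $\Upsilon^{g,\theta}$ force $X_{t}^{g,\theta}$ to track $\sqrt{2\nu}\,\Sigma_{t}^{g}$ up to an error $\sqrt{\Delta\Sigma_{i}^{g}}$, and because $g$ equals the constant $g_{n}=\gamma\sqrt{-\ln r_{n}/G(r_{n})}$ on $(r_{n},r_{n-1}]$ so that $X_{t}^{g,\theta}=g_{n}(\bar{\theta}_{t}-\bar{\theta}_{r_{n-1}})+X_{r_{n-1}}^{g,\theta}$ there, this translates into a forced \emph{drift}
\[
\bar{\theta}_{s}(y)\approx\sqrt{2\nu}\,g_{n}\bigl(G(s)-G(r_{n-1})\bigr),\qquad s\in(r_{n},r_{n-1}],
\]
the cumulative tolerance being of order $(-\ln r_{n})^{1/4}\sqrt{G(r_{n})}$, genuinely lower order. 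Hence $\bar{\theta}_{r_{n}}(y)-\bar{\theta}_{R_{n}}(y)\approx\sqrt{2\nu}\gamma\,c\,\sqrt{-G(r_{n})\ln r_{n}}$, a fixed positive fraction and not $o(\cdot)$. Plugging the drift into the threshold shows that, throughout the slab,
\[
\frac{\bar{\theta}_{s}(y)}{\sqrt{-G(s)\ln s}}\approx\sqrt{2\nu}\gamma\sqrt{\frac{G(s)}{G(r_{n})}}\;<\;\sqrt{2\nu}\gamma,
\]
with equality only in the limit $s\searrow r_{n}$. So the set $\{s:\bar{\theta}_{s}(y)\geq\sqrt{2\nu}\gamma\sqrt{-G(s)\ln s}\}$ has $G$-measure tending to $0$ in each slab, and $\Upsilon^{g,\theta}$ is \emph{not} contained in $LT^{\gamma,\theta}$. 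Your argument, corrected to use the drift, would only yield $\Upsilon^{g,\theta}\subseteq LT^{\gamma'',\theta}$ for every $\gamma''<\gamma$, which is strictly weaker than what is claimed.

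The paper's proof supplies exactly the two missing devices. First, the piecewise-constant coefficient is taken to be some $\gamma'>\gamma$ (arbitrarily close), so that after the unavoidable drift-down factor $\sqrt{G(s)/G(r_{n})}$ the ratio still exceeds $\gamma$ on a sub-slab $(r_{n},Ar_{n}]$ of positive $G$-fraction, with $A=(\gamma'/\gamma'')^{2/(\nu-2)}$ and $\gamma''=(\gamma+\gamma')/2$. Second, a perturbation $\epsilon/\sqrt{G(t)}$ is added so that $\underline{c}_{g^{\epsilon}}>0$ and Theorem \ref{thm:main theorem hausdorff dimension} applies to the full steep-point set $D^{g^{\epsilon},\theta}$, whose defining limit controls $X_{t}^{g^{\epsilon},\theta}(x)$ at every small $t$, exactly what is needed to follow the trajectory across the sub-slab; working with $SD^{g,\theta}$ and a post-hoc Brownian oscillation bound is the point at which your argument breaks. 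Sending $\gamma'\searrow\gamma$ and $\epsilon\searrow0$ then gives $\nu(1-2\gamma^{2})$.
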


\begin{proof}
Only the lower bound requires proof. After a quick examination of
the arguments in $\mathsection4.2.1$, we realize that, for the piece-wise
constant function $g$ defined in (\ref{eq:def of g piecewise}),
we cannot apply Theorem \ref{thm:main theorem hausdorff dimension}
because $\underline{c}_{g}=0$. To overcome this problem, we consider
a perturbation of $g$. Namely, let $\gamma^{\,\prime}>\gamma$ be
arbitrarily close to $\gamma$, $\epsilon>0$ be arbitrarily small
and $\left\{ r_{n}:n\geq0\right\} $ be the same as in $\mathsection4.2.1$,
and define the function
\[
g^{\epsilon}:t\in(0,1]\mapsto g^{\epsilon}\left(t\right):=\gamma^{\,\prime}\sum_{n\geq1}\sqrt{\frac{-\ln r_{n}}{G\left(r_{n}\right)}}\mathbb{I}_{(r_{n},r_{n-1}]}\left(t\right)+\epsilon\frac{1}{\sqrt{G\left(t\right)}}.
\]
Again, $g^{\epsilon}\in\mathcal{C}$. It is straightforward to check,
by (\ref{eq: assumption on decay rate of r_m}), that when $n$ is
sufficiently large,
\[
\begin{split}\Sigma_{r_{n}}^{g^{\epsilon}} & =\left[\left(\gamma^{\,\prime}\right)^{2}+o\left(1\right)\right]\left(-\ln r_{n}\right)+\epsilon^{2}\ln G\left(r_{n}\right),\end{split}
\]
and if $t\in(r_{n},r_{n-1}]$, then $\Sigma_{t}^{g^{\epsilon}}$ is
equal to
\begin{equation}
\begin{split} & \Sigma_{r_{n-1}}^{g^{\epsilon}}+\int_{r_{n-1}}^{t}\left(\gamma^{\,\prime}\sqrt{\frac{-\ln r_{n}}{G\left(r_{n}\right)}}+\epsilon\frac{1}{\sqrt{G\left(s\right)}}\right)^{2}dG\left(s\right)\\
= & \left(\gamma^{\,\prime}\right)^{2}\left(-\ln r_{n}\right)\frac{G\left(t\right)}{G\left(r_{n}\right)}+4\gamma^{\,\prime}\epsilon\sqrt{-\ln r_{n}}\sqrt{\frac{G\left(t\right)}{G\left(r_{n}\right)}}+\left[\left(\gamma^{\,\prime}\right)^{2}+o\left(1\right)\right]\left(-\ln r_{n-1}\right)+\epsilon^{2}\ln G\left(t\right)\\
= & \underset{\varphi_{n,t}}{\underbrace{\left(\gamma^{\,\prime}\right)^{2}\left(-\ln r_{n}\right)\frac{G\left(t\right)}{G\left(r_{n}\right)}+\left(\gamma^{\,\prime}\right)^{2}\left(-\ln r_{n-1}\right)}}+\epsilon^{2}\ln G\left(t\right)+o\left(-\ln t\right)
\end{split}
\label{eq:computation of sigma_t}
\end{equation}
Since 
\[
\limsup_{t\searrow0}\sum_{n=1}^{\infty}\mathbb{I}_{(r_{n},r_{n-1}]}\left(t\right)\frac{\varphi_{n,t}}{-\ln t}=\left(\gamma^{\,\prime}\right)^{2}\text{ and }\liminf_{t\searrow0}\sum_{n=1}^{\infty}\mathbb{I}_{(r_{n},r_{n-1}]}\left(t\right)\frac{\varphi_{n,t}}{-\ln t}=0,
\]
we see that
\[
\bar{c}_{g^{\epsilon}}=\left(\gamma^{\,\prime}\right)^{2}+\epsilon^{2}\left(\nu-2\right)\text{ and }\underline{c}_{g^{\epsilon}}=\epsilon^{2}\left(\nu-2\right).
\]
It becomes clear that by including the term ``$\epsilon\frac{1}{\sqrt{G\left(t\right)}}$''
in the definition of $g^{\epsilon}$, we have made $\underline{c}_{g^{\epsilon}}>0$,
to which case the main theorem can apply. Applying Theorem \ref{thm:main theorem hausdorff dimension},
we know that 
\[
\dim_{\mathcal{H}}\left(D^{g^{\epsilon},\theta}\right)\geq\nu\left[1-2\left(\gamma^{\,\prime}\right)^{2}-\epsilon^{2}\left(\nu-2\right)\right]
\]
provided that $\left(\gamma^{\,\prime}\right)^{2}<\frac{1}{2}$ and
$\epsilon>0$ is sufficiently small. 

On the other hand, for every $\theta\in\Theta$, $x\in\overline{S\left(O,1\right)}$
and $n\geq1$, if $t\in(r_{n},r_{n-1}]$, then
\begin{align*}
X_{t}^{g^{\epsilon},\theta}\left(x\right) & =\gamma^{\,\prime}\sqrt{\frac{-\ln r_{n}}{G\left(r_{n}\right)}}\left(\bar{\theta}_{t}\left(x\right)-\bar{\theta}_{r_{n-1}}\left(x\right)\right)\\
 & \hfill\hfill+\gamma^{\,\prime}\sum_{j=1}^{n-1}\sqrt{\frac{-\ln r_{j}}{G\left(r_{j}\right)}}\left(\bar{\theta}_{r_{j}}\left(x\right)-\bar{\theta}_{r_{j-1}}\left(x\right)\right)+\epsilon\int_{1}^{t}\frac{1}{\sqrt{G\left(s\right)}}d\bar{\theta}_{s}\left(x\right).
\end{align*}
We write $\gamma^{\,\prime\prime}:=\frac{\gamma+\gamma^{\,\prime}}{2}$
and restrict $t$ to $(r_{n},Ar_{n}]$ where $A:=\left(\gamma^{\,\prime}/\gamma^{\,\prime\prime}\right)^{\frac{2}{\nu-2}}$
. It is clear from (\ref{eq:computation of sigma_t}) that 
\begin{align*}
\Sigma_{t}^{g^{\epsilon}}\geq\Sigma_{Ar_{n}}^{g^{\epsilon}} & \geq\left[\left(\gamma^{\,\prime}\right)^{2}A^{2-\nu}+o\left(1\right)\right]\left(-\ln r_{n}\right).
\end{align*}
Following the same arguments as earlier, we see that when $n$ is
large,
\[
-\gamma^{\,\prime}\sqrt{\frac{-\ln r_{n}}{G\left(r_{n}\right)}}\bar{\theta}_{r_{n-1}}\left(x\right)+\gamma^{\,\prime}\sum_{j=1}^{n-1}\sqrt{\frac{-\ln r_{j}}{G\left(r_{j}\right)}}\left(\bar{\theta}_{r_{j}}\left(x\right)-\bar{\theta}_{r_{j-1}}\left(x\right)\right)=o\left(-\ln r_{n}\right)=o\left(\Sigma_{t}^{g^{\epsilon}}\right),
\]
so for $t\in(r_{n},Ar_{n}]$, 
\[
\begin{split}\frac{X_{t}^{g^{\epsilon},\theta}\left(x\right)}{\Sigma_{t}^{g^{\epsilon}}} & =\gamma^{\,\prime}\frac{\sqrt{-\ln r_{n}}\bar{\theta}_{t}\left(x\right)}{\sqrt{G\left(r_{n}\right)}\Sigma_{t}^{g^{\epsilon}}}+\epsilon\frac{\int_{1}^{t}\frac{1}{\sqrt{G\left(s\right)}}d\bar{\theta}_{s}\left(x\right)}{\Sigma_{t}^{g^{\epsilon}}}+o\left(1\right).\end{split}
\]
Furthermore, (\ref{eq: thin point limsup upper bound}) tells us that,
when $t\in(r_{n},Ar_{n}]$ and $n$ is large,
\[
\epsilon\frac{\left|\int_{1}^{t}\frac{1}{\sqrt{G\left(s\right)}}d\bar{\theta}_{s}\left(x\right)\right|}{\Sigma_{t}^{g^{\epsilon}}}\leq\epsilon2\sqrt{\nu\left(\nu-2\right)}\frac{\left(-\ln t\right)}{\Sigma_{t}^{g^{\epsilon}}}\leq\frac{\epsilon2\sqrt{\nu\left(\nu-2\right)}}{\left(\gamma^{\,\prime}\right)^{2}A^{2-\nu}}+o\left(1\right).
\]
All in all, we have that, if $x\in D^{g^{\epsilon},\theta}$, $n$
is sufficiently large, and $t\in(r_{n},Ar_{n}]$, then 
\[
\frac{\sqrt{-\ln r_{n}}\bar{\theta}_{t}\left(x\right)}{\sqrt{G\left(r_{n}\right)}\Sigma_{t}^{g^{\epsilon}}}\geq\frac{1}{\gamma^{\,\prime}}\left[\sqrt{2\nu}+o\left(1\right)-\frac{\epsilon2\sqrt{\nu\left(\nu-2\right)}}{\left(\gamma^{\,\prime}\right)^{2}A^{2-\nu}}\right],
\]
which implies that 
\[
\begin{split}\frac{\bar{\theta}_{t}\left(x\right)}{\sqrt{-G\left(t\right)\ln t}} & \geq\frac{\sqrt{-\ln r_{n}}\bar{\theta}_{t}\left(x\right)}{\sqrt{G\left(r_{n}\right)}\Sigma_{t}^{g^{\epsilon}}}\cdot\frac{\Sigma_{t}^{g^{\epsilon}}}{\left(-\ln r_{n}\right)}\\
 & \geq\left[\sqrt{2\nu}+o\left(1\right)-\frac{\epsilon2\sqrt{\nu\left(\nu-2\right)}}{\left(\gamma^{\,\prime}\right)^{2}A^{2-\nu}}\right]\left(\gamma^{\,\prime}A^{2-\nu}+o\left(1\right)\right)\\
 & =\sqrt{2\nu}\frac{\left(\gamma^{\,\prime\prime}\right)^{2}}{\gamma^{\,\prime}}+o\left(1\right)-\epsilon\frac{2\sqrt{\nu\left(\nu-2\right)}}{\gamma^{\,\prime}}\geq\sqrt{2\nu}\gamma,
\end{split}
\]
provided that $\epsilon$ is sufficiently small. This is to say that
\[
\limsup_{n\nearrow\infty}\frac{\int_{1}^{r_{n}}\mathbb{I}_{[\sqrt{2\nu}\gamma,\infty)}\left(\frac{\bar{\theta}_{s}\left(x\right)}{\sqrt{-G\left(s\right)\ln s}}\right)dG\left(s\right)}{G\left(r_{n}\right)}\geq\lim_{n\nearrow\infty}\frac{G\left(r_{n}\right)-G\left(Ar_{n}\right)}{G\left(r_{n}\right)}=1-\left(\frac{\gamma^{\,\prime\prime}}{\gamma^{\,\prime}}\right)^{2}>0.
\]
We can conclude that $\mathcal{\mathcal{W}}-$almost surely $D^{g^{\epsilon},\theta}\subseteq LT^{\gamma,\theta}$
and hence
\[
\dim_{\mathcal{H}}\left(LT^{\gamma,\theta}\right)\geq\dim_{\mathcal{H}}\left(D^{g^{\epsilon},\theta}\right)\geq\nu\left[1-2\left(\gamma^{\,\prime}\right)^{2}-\epsilon^{2}\left(\nu-2\right)\right].
\]
Finally, since $\gamma^{\,\prime}>\gamma$ is arbitrarily close to
$\gamma$ and $\epsilon>0$ is arbitrarily small, we have that $\mathcal{W}-$almost
surely
\[
\dim_{\mathcal{H}}\left(LT^{\gamma,\theta}\right)\geq\nu\left(1-2\gamma^{2}\right).
\]
\end{proof}

\section{Generalizations and Further Questions}

At the end of the article, we briefly allude to a few related problems
and directions in which we would like to further our study.

\subsection{``Thin Points''}

As mentioned in $\mathsection4.2$, when $\nu\geq3$, a natural choice
of $f:\,t\in(0,1]\mapsto f\left(t\right)\in\mathbb{R}$ to which we
can apply Theorem \ref{thm:main theorem hausdorff dimension} is that
$f\left(t\right)$ being a constant multiple of $\frac{1}{\sqrt{G\left(t\right)}}$,
say, $f\left(t\right)=\frac{c}{\sqrt{G\left(t\right)}}$ for some
$c\in\mathbb{R}\backslash\left\{ 0\right\} $. In this case we have
pointed out in (\ref{eq: thin point}) that $x\in D^{f,\theta}$ if
and only if 
\begin{equation}
\lim_{t\searrow0}\frac{\int_{1}^{t}\frac{d\bar{\theta}_{s}\left(x\right)}{\sqrt{G\left(s\right)}}}{-\ln t}=c\sqrt{2\nu}\left(\nu-2\right).\label{eq:thin point reparametrized}
\end{equation}
Let us take a more careful look at the limit involved in (\ref{eq:thin point reparametrized}).
Since for every $\theta\in\Theta$, $x\in\overline{S\left(O,1\right)}$
and $t\in(0,1]$, 
\[
\int_{1}^{t}\frac{d\bar{\theta}_{s}\left(x\right)}{\sqrt{G\left(t\right)}}=\frac{\bar{\theta}_{t}\left(x\right)}{\sqrt{G\left(t\right)}}-\frac{\bar{\theta}_{1}\left(x\right)}{\sqrt{G\left(1\right)}}+\frac{1}{2}\int_{1}^{t}\frac{\bar{\theta}_{s}\left(x\right)}{\sqrt{G\left(s\right)}}\frac{dG\left(s\right)}{G\left(s\right)},
\]
and, again, as we have pointed out earlier, $\mathcal{W}-$almost
surely,
\[
\sup_{x\in\overline{S\left(O,1\right)}}\limsup_{t\searrow0}\frac{\left|\bar{\theta}_{t}\left(x\right)\right|}{\sqrt{-G\left(t\right)\ln t}}\leq\sqrt{2\nu},
\]
it is clear that the limit concerned in (\ref{eq:thin point reparametrized})
is equivalent to 
\[
\lim_{t\searrow0}\frac{\int_{1}^{t}\frac{\bar{\theta}_{s}\left(x\right)}{\sqrt{G\left(s\right)}}\frac{dG\left(s\right)}{G\left(s\right)}}{\ln G\left(t\right)}=2c\sqrt{2\nu}\text{ or }\lim_{t\searrow0}\frac{\int_{1}^{t}\left(\frac{\bar{\theta}_{s}\left(x\right)}{\sqrt{G\left(s\right)}}-2c\sqrt{2\nu}\right)\frac{dG\left(s\right)}{G\left(s\right)}}{\ln G\left(t\right)}=0.
\]
If the limit above is achieved, then it suggests that, at least when
measured by the measure ``$\frac{dG\left(t\right)}{G\left(t\right)}$'',
$\bar{\theta}_{t}\left(x\right)/\sqrt{G\left(t\right)}$ tends to
stay ``close'' to the level of $2c\sqrt{2\nu}$ when $t$ is small,
which means that $\bar{\theta}_{t}\left(x\right)$'s value is unusually
small. If we, tentatively, call such a location $x$ a \emph{``thin
point'' }of $\theta$, then (\ref{eq: thin point H dim}) tells us
that the Hausdorff dimension of the set of ``thin points'' is $\nu\left[1-c^{2}\left(\nu-2\right)\right]$
for $\mathcal{W}-$almost every $\theta\in\Theta$, provided that
$c^{2}\leq\frac{1}{\nu-2}$. 

We think the characterization of ``thin points'' of the GFF can
be improved since the version described there is indirect and restricted
(having to be measured by ``$\frac{dG\left(t\right)}{G\left(t\right)}$'').
Therefore, we hope to further analyze the phenomenon of $\bar{\theta}_{t}$
maintaining unusually low values, by devising a more explicit scheme
to compare or connect $\bar{\theta}_{t}\left(x\right)$ with a constant
multiple of $\sqrt{G\left(t\right)}$.

\subsection{Dependence or Independence on the Choice of $f$}

As we have mentioned in the Introduction, to overcome the singularity
of GFFs in general, various regularization procedures have been introduced
and adopted in the study of GFFs. Although different regularization
procedures may work equally well in the study of certain properties
of GFFs, it is unclear, in most cases, whether an obtained result
is dependent on the specific regularization, or it is intrinsic about
the GFF itself and independent of the choice of regularization. For
example, it remains open, in the general setting, whether two thick
point sets obtained through two different regularizations have any
connection, as well as whether there is an intrinsic way to define
thick points without the use of any regularization.

In our project it is clear that, if $f_{1}$ are $f_{2}$ are two
different choices from $\mathcal{C}$ with $c_{f_{1}}=c_{f_{2}}:=c\in\left[0,1\right]$,
then $\mathcal{W}-$almost surely

\[
\dim_{\mathcal{H}}\left(D^{f_{1},\theta}\right)=\dim_{\mathcal{H}}\left(D^{f_{2},\theta}\right)=\nu\left(1-c\right).
\]
So, when the two choices of test functions have the same key parameter,
at least the Hausdorff dimension of the corresponding steep point
sets are identical. We are interested in further studying the relation
between $D^{f_{1},\theta}$ and $D^{f_{2},\theta}$, In particular,
we hope to use the framework developed in this article to determine
the conditions on $f_{1}$ and $f_{2}$ under which the difference
set between $D^{f_{1},\theta}$ and $D^{f_{2},\theta}$ is small,
as well as to design examples of $f_{1}$ and $f_{2}$ such that the
difference set between $D^{f_{1},\theta}$ and $D^{f_{2},\theta}$
is big.

\subsection{Liouville Quantum Gravity Measures in $\mathbb{R}^{\nu}$ for $\nu\geq3$}

In the Introduction we briefly alluded to the Liouville Quantum Gravity
(LQG) measure on a planar domain, which is a random measure that formally
takes the form of ``$e^{h\left(z\right)}dz$'' where $h$ is a generic
element of the 2D log-correlated GFF and $dz$ is the Lebesgue measure
on the domain. Since, formally, the ``density'' with respect to
the Lebesgue measure is always positive, the LQG measure can be thought
as the induced measure of the Lebesgue measure under a random conformal
transformation, providing a model of 2D random geometry. The fact
that the covariance function of the GFF has a logarithmic (and no
worse than logarithmic) singularity plays an essential role in the
mathematical construction of the LQG measure. Therefore, the straightforward
analog of the LQG measure in $\mathbb{R}^{\nu}$ for $\nu\geq3$,
i.e., ``$e^{\theta\left(x\right)}dx$'' where $\theta$ is a generic
element of the polynomial-correlated GFF on $\mathbb{R}^{\nu}$, is
not accessible in the same way.

On the other hand, if one is interested in modeling random geometry
in $\mathbb{R}^{\nu}$ for $\nu\geq3$ using $\theta$, then a possible
approach is to construct the analog of the LQG measure with the regularized
family of $\theta$ replaced by $\left\{ X_{t}^{f,\theta}\left(x\right):x\in\mathbb{R}^{\nu},t\in(0,1]\right\} $
for some $f\in\mathcal{C}$. The family of $X_{t}^{f,\theta}\left(x\right)$
has the desired logarithmic singularity. Besides, since the LQG measure
has a thick point set as its support, one can expect that an analogous
random measure will be supported on the corresponding $f-$steep point
set. It is also possible to extend further results on the LQG measure
to the proposed random measure, such as the Knizhnik-Polyakov-Zamolodchikov
formula which governs the correspondence between the scaling dimension
of the random measure and that of the Lebesgue measure. We will investigate
this matter in the upcoming work.

\section{Appendix}

In the Appendix we include the complete proofs of Lemma \ref{lem:expectation of max non-concentric}
and Lemma \ref{lem:modulus of X}.

\subsection{Proof of Lemma \ref{lem:expectation of max non-concentric}}

Recall that we want to show that, there exists $C>0$ such that for
every $\nu\geq2$, $x,y\in\overline{S\left(O,1\right)}$, $t\in(0,1]$
and $\delta\in\left(0,\sqrt{G\left(t\right)}\right)$, we have \\
\begin{equation}
d^{2}\left(x,t;\,y,t\right):=\mathbb{E}^{\mathcal{W}}\left[\left|\bar{\theta}_{t}\left(x\right)-\bar{\theta}_{s}\left(y\right)\right|^{2}\right]\leq Ct^{2-\nu}\sqrt{\frac{\left|x-y\right|}{t}},\label{eq:estimate intrinsic metric non-concentric}
\end{equation}
and
\begin{equation}
\mathbb{E}^{\mathcal{W}}\left[\omega_{t}^{\theta}\left(\delta\right)\right]\leq C\delta\sqrt{\ln\left(t^{\left(3-2\nu\right)/4}/\delta\right)},\label{eq:expectation of modulus of continuity}
\end{equation}
where 
\begin{equation}
\omega{}_{t}^{\theta}\left(\delta\right):=\sup\left\{ \left|\bar{\theta}_{s}\left(x\right)-\bar{\theta}_{s^{\prime}}\left(y\right)\right|:\,d\left(x,s;y,s^{\prime}\right)\leq\delta,\,x,y\in\overline{S\left(O,1\right)},\,s,s^{\prime}\in\left[t,1\right]\right\} .\label{eq:def of cont modulus}
\end{equation}
\begin{proof}
Assume $x\neq y$. By (\ref{eq:covariance for (1-Delta)^s concentric})
and (\ref{eq:covariance for (1-Delta)^s-1}), we have that
\[
d^{2}\left(x,t;\,y,t\right)=\frac{2\alpha_{\nu}}{\left(2\pi\right)^{\nu}I_{\frac{\nu-2}{2}}^{2}\left(t\right)}\int_{0}^{\infty}\frac{\tau}{1+\tau^{2}}J_{\frac{\nu-2}{2}}^{2}\left(t\tau\right)\Psi\left(\tau\left|x-y\right|\right)d\tau
\]
where $\Psi$ is the function given by 
\[
w\in\left(0,\infty\right)\rightsquigarrow\Psi\left(w\right):=1-\frac{\left(2\pi\right)^{\nu/2}}{\alpha_{\nu}}w^{\frac{2-\nu}{2}}J_{\frac{\nu-2}{2}}\left(w\right).
\]
It follows from the properties of $J_{\frac{\nu-2}{2}}$ that $\Psi$
is analytic and 
\[
\Psi\left(w\right)=\Gamma\left(\nu/2\right)\sum_{m=1}^{\infty}\frac{\left(-1\right)^{m-1}2^{-2m}w^{2m}}{m!\Gamma\left(\frac{\nu}{2}+m\right)}.
\]
Clearly, there exists $C>0$ such that $\left|\Psi\left(w\right)\right|\leq C\sqrt{w}$
for all $w\in[0,\infty)$. Therefore, 
\[
d^{2}\left(x,t;\,y,t\right)\leq C\frac{1}{I_{\frac{\nu-2}{2}}^{2}\left(t\right)}\sqrt{\left|x-y\right|}\int_{0}^{\infty}\frac{\tau^{3/2}}{1+\tau^{2}}J_{\frac{\nu-2}{2}}^{2}\left(t\tau\right)d\tau.
\]
Assuming $t$ is small, we can estimate the integral in the right
hand side of above as follows:
\[
\begin{split}\int_{0}^{\infty}\frac{\tau^{3/2}}{1+\tau^{2}}J_{\frac{\nu-2}{2}}^{2}\left(t\tau\right)d\tau & =\left(\int_{0}^{1}+\int_{1}^{1/t}+\int_{1/t}^{\infty}\right)\frac{\tau^{3/2}}{1+\tau^{2}}J_{\frac{\nu-2}{2}}^{2}\left(t\tau\right)d\tau\\
 & \leq C+C\int_{1}^{1/t}\tau^{-1/2}d\tau+Ct^{-1}\int_{1/t}^{\infty}\tau^{-3/2}d\tau\\
 & \leq Ct^{-1/2},
\end{split}
\]
which leads to the desired inequality (\ref{eq:estimate intrinsic metric non-concentric}).

We will apply the metric entropy method (e.g., \cite{Dudley,Talagrand,AT07})
to prove (\ref{eq:expectation of modulus of continuity}). For every
compact subset $\mathcal{A}\subseteq\overline{S\left(O,1\right)}\times(0,1]$,
let $\mbox{diam}_{d}\left(\mathcal{A}\right)$ be the diameter of
$\mathcal{A}$ under the metric $d$. $\mathcal{A}$ is also compact
under $d$, so $\mathcal{A}$ can be finitely covered under $d$.
For $\epsilon>0$ and $\mathbf{x}\in\overline{S\left(O,1\right)}\times(0,1]$,
let $B_{d}\left(\mathbf{x},\epsilon\right)$ be the open disc/ball
centered at $\mathbf{x}$ with radius $\epsilon$ under $d$, and
$N\left(\epsilon,\mathcal{A}\right)$ be the smallest number of such
discs/balls $B_{d}\left(\mathbf{x},\epsilon\right)$ required to cover
$\mathcal{A}$. Then $N$ is the metric entropy function with respect
to $d$. For any fixed $t\in\left(0,1\right)$, set
\[
\mathcal{A}_{t}:=\,\overline{S\left(O,1\right)}\times\left[t,1\right],
\]
and let $\omega_{t}^{\theta}$ be as in (\ref{eq:def of cont modulus}).
Then $\omega_{t}^{\theta}$ is the modulus of continuity of the Gaussian
family $\left\{ \bar{\theta}_{s}\left(x\right):\left(x,s\right)\in\mathcal{A}_{t}\right\} $
under the metric $d$, i.e., for $\delta>0$, 
\[
\omega{}_{t}^{\theta}\left(\delta\right)=\sup\left\{ \left|\bar{\theta}_{s}\left(x\right)-\bar{\theta}_{s^{\prime}}\left(y\right)\right|:\,\left(x,s\right),\left(y,s^{\prime}\right)\in\mathcal{A}_{t},\:d\left(x,s;y,s^{\prime}\right)\leq\delta\right\} .
\]
Then, according to the standard metric entropy theory (e.g., Theorem
1.3.5 of \cite{AT07}), there is a universal constant $K>0$ such
that 
\begin{equation}
\mathbb{E}^{\mathcal{W}}\left[\omega{}_{t}^{\theta}\left(\delta\right)\right]\leq K\int_{0}^{\delta}\sqrt{\ln N\left(\epsilon,\mathcal{A}_{t}\right)}d\epsilon.\label{eq:entropy result}
\end{equation}
Below we describe a specific finite covering of $\mathcal{A}_{t}$
for every $\epsilon>0$ sufficiently small. 

First, set
\[
s_{\epsilon}:=\frac{1}{2}\left(\frac{\epsilon^{2}}{9}C^{-1}t^{\nu-3/2}\right)^{2}
\]
where $C$, for the moment, is the same constant as in (\ref{eq:estimate intrinsic metric non-concentric}),
and let
\[
\left\{ B\left(y_{l},s_{\epsilon}\right):\,l=1,\cdots,L_{\epsilon}\right\} 
\]
be a finite covering of $\overline{S\left(O,1\right)}$ where $y_{l}\in S\left(O,1\right)$
and $L_{\epsilon}$ be the smallest number of discs/balls $B\left(y_{l},s_{\epsilon}\right)$
needed to cover $\overline{S\left(O,1\right)}$ and hence 
\[
L_{\epsilon}=\mathcal{O}\left(s_{\epsilon}^{-\nu}\right)\leq C\left[\epsilon^{-1}t^{\left(3-2\nu\right)/4}\right]^{4\nu}.
\]
By (\ref{eq:estimate intrinsic metric non-concentric}), the choice
of $s_{\epsilon}$ is such that, for every $y,w\in B\left(y_{l},s_{\epsilon}\right)$
and every $s\in\left[t,1\right]$, 
\[
d^{2}\left(y,s;\,y^{\prime},s\right)\leq Cs^{3/2-\nu}\sqrt{2s_{\epsilon}}\leq\epsilon^{2}/9.
\]
Next, take $\tau_{0}:=2$ and define $\tau_{m}$ inductively such
that 
\[
G\left(\tau_{m}\right)-G\left(\tau_{m-1}\right)=\epsilon^{2}/9
\]
for $m=1,\cdots,M_{\epsilon}+1$, where $M_{\epsilon}$ is the smallest
integer such that $\tau_{M_{\epsilon}}\leq t$ and hence 
\[
M_{\epsilon}\leq C\left(G\left(t\right)\right)/\epsilon^{2}.
\]
Consider the covering of $\overline{S\left(O,1\right)}\times\left[t,1\right]$
that consists of the ``cylinders'' 
\[
\left\{ B\left(y_{l},s_{\epsilon}\right)\times(\tau_{m+1},\tau_{m-1}):\,l=1,\cdots,L_{\epsilon},\,m=1,\cdots,M_{\epsilon}\right\} .
\]
Any pair of points $\left(\left(y,t\right),\,\left(w,s\right)\right)$
that lies in one of the ``cylinders'' above, e.g., $B\left(y_{l},s_{\epsilon}\right)\times(\tau_{m+1},\tau_{m-1})$,
satisfies that 
\[
\begin{split}d\left(y,t;\,w,s\right) & \leq d\left(y,t;\,y,\tau_{m}\right)+d\left(y,\tau_{m};\,w,\tau_{m}\right)+d\left(w,\tau_{m};\,w,s\right)\\
 & \leq\epsilon/3+\epsilon/3+\epsilon/3=\epsilon.
\end{split}
\]
This implies that 
\[
N\left(\epsilon,\text{\ensuremath{\mathcal{A}}}_{t}\right)\leq L_{\epsilon}\cdot\left(M_{\epsilon}+1\right),
\]
and hence by (\ref{eq:entropy result}),
\[
\begin{split}\mathbb{E}^{\mathcal{W}}\left[\omega_{t}^{\theta}\left(\delta\right)\right] & \leq C\int_{0}^{\delta}\left(\sqrt{\ln L_{\epsilon}}+\sqrt{\ln M_{\epsilon}}\right)d\epsilon.\end{split}
\]
Therefore, we only need to compute the two integrals in the right
hand side above. 

By the estimates we derived for $L_{\epsilon}$ above and a simple
change of variable $u=\sqrt{\ln\left(\epsilon^{-1}t^{\frac{3-2\nu}{4}}\right)}$,
we get that
\[
\begin{split}\int_{0}^{\delta}\sqrt{\ln L_{\epsilon}}d\epsilon & \leq C\int_{0}^{\delta}\sqrt{\ln\left(\epsilon^{-1}t^{\frac{3-2\nu}{4}}\right)}d\epsilon\leq Ct^{\frac{3-2\nu}{4}}\int_{\sqrt{\ln\left(\delta^{-1}t^{\frac{3-2\nu}{4}}\right)}}^{\infty}u^{2}e^{-u^{2}}du.\end{split}
\]
Since$\int_{a}^{\infty}e^{-u^{2}}u^{2}du=\mathcal{O}\left(ae^{-a^{2}}\right)$
when $a>0$ is sufficiently large, we arrive that 
\[
\int_{0}^{\delta}\sqrt{\ln L_{\epsilon}}d\epsilon\leq C\delta\sqrt{\ln\left(t^{\left(3-2\nu\right)/4}/\delta\right)}.
\]
Similarly, one can derive that
\[
\int_{0}^{\delta}\sqrt{\ln M_{\epsilon}}d\epsilon\leq C\delta\sqrt{\ln\left(\sqrt{G\left(t\right)}/\delta\right)}.
\]
Combining the inequalities above, we have proven (\ref{eq:expectation of modulus of continuity}).
\end{proof}

\subsection{Proof of Lemma \ref{lem:modulus of X}}

Recall that for every $n\geq1$, $B_{n}$ is the Borel set in $\overline{S\left(O,1\right)}\times\overline{S\left(O,1\right)}\times(0,1]$
that 
\[
B_{n}:=\left\{ \left(x,y,t\right):\,x,y\in\overline{S\left(O,1\right)},\left|x-y\right|<2^{-\left(n+1\right)^{2}}2\sqrt{\nu},\,t\in\left[2^{-n^{2}},2^{-\left(n-1\right)^{2}}\right]\right\} .
\]
We want to show that for every sufficiently large $n$,
\[
\mathbb{E}^{\mathcal{W}}\left[\sup_{\left(x,y,t\right)\in B_{n}}\left|\frac{X_{t}^{f,\theta}\left(y\right)}{\Sigma_{t}^{f}}-\frac{X_{t}^{f,\theta}\left(x\right)}{\Sigma_{t}^{f}}\right|\right]\leq2^{-\frac{n}{4}},
\]
as well as 
\[
\mathcal{W}\left(\sup_{\left(x,y,t\right)\in B_{n}}\left|\frac{X_{t}^{f,\theta}\left(y\right)}{\Sigma_{t}^{f}}-\frac{X_{t}^{f,\theta}\left(x\right)}{\Sigma_{t}^{f}}\right|>2^{-\frac{n}{8}}\;\mbox{ i.o. }\right)=0.
\]
\begin{proof}
We only need to prove the first statement, since the second statement
is an immediate consequence of the first one by the Borel-Cantelli
lemma. 

To facilitate the proof, we first make the following observations.
For every $\theta\in\Theta$, $n\geq1$ and $t\in\left[2^{-n^{2}},1\right]$,
we define
\begin{equation}
m_{n}^{\theta}\left(t\right):=\sup\left\{ \left|\bar{\theta}_{s}\left(x\right)-\bar{\theta}_{s}\left(y\right)\right|:\,x,y\in\overline{S\left(O,1\right)},\,\left|x-y\right|\leq2^{-\left(n+1\right)^{2}}2\sqrt{\nu},\,s\in[t,1]\right\} .\label{eq:def of m^theta(t)}
\end{equation}
By (\ref{eq:estimate intrinsic metric non-concentric}), whenever
$\left|x-y\right|\leq2^{-\left(n+1\right)^{2}}2\sqrt{\nu}$ and $s\in[t,1]$, 

\[
d\left(x,s;y,s\right)\leq Cs^{\left(3-2\nu\right)/4}\left|x-y\right|^{1/4}\leq Ct^{\left(3-2\nu\right)/4}2^{-\left(n+1\right)^{2}/4},
\]
and hence 
\[
m_{n}^{\theta}\left(t\right)\leq\omega_{t}^{\theta}\left(Ct^{\left(3-2\nu\right)/4}2^{-\left(n+1\right)^{2}/4}\right)
\]
where $\omega_{t}^{\theta}$ is as defined in (\ref{eq:def of cont modulus}).
Therefore, by (\ref{eq:expectation of modulus of continuity}), it
is easy to check that
\begin{equation}
\begin{split}\mathbb{E}^{\mathcal{W}}\left[m_{n}^{\theta}\left(t\right)\right] & \leq\mathbb{E}^{\mathcal{W}}\left[\omega_{t}^{\theta}\left(Ct^{\left(3-2\nu\right)/4}2^{-\left(n+1\right)^{2}/4}\right)\right]\leq C\sqrt{G\left(t\right)}2^{-n/2}n.\end{split}
\label{eq:an estimate on expectation of max diff up to t}
\end{equation}

Now let us turn our attention to the desired statement. Recall that
$\mathcal{J}:=\left\{ t_{j}:j\geq1\right\} $ are all the jump discontinuities
of $f$. Set $t_{0}\equiv1$. Assume that $J$ and $J^{\prime}$ are
the two integers, $J^{\prime}\leq J$, such that
\[
t_{J^{\prime}-1}>2^{-\left(n-1\right)^{2}}\geq t_{J^{\prime}}>\cdots\cdots>t_{J}\geq2^{-n^{2}}>t_{J+1}.
\]
For fixed $t\in\left[2^{-n^{2}},2^{-\left(n-1\right)^{2}}\right]$,
assume that $K:=K\left(t\right)$ is the unique integer, $J^{\prime}\leq K\leq J$,
such that
\[
t_{J^{\prime}-1}>2^{-\left(n-1\right)^{2}}\geq t_{J^{\prime}}>\cdots>t_{K}\geq t>t_{K+1}>\cdots>t_{J}\geq2^{-n^{2}}>t_{J+1}.
\]
Then, by rewriting it as a telescoping sum and using the triangle
inequality, we have that
\[
\sup_{\left(x,y,t\right)\in B_{n}}\left|\frac{X_{t}^{f,\theta}\left(y\right)-X_{t}^{f,\theta}\left(x\right)}{\Sigma_{t}^{f}}\right|
\]
is no greater than 
\begin{equation}
\begin{split} & \sup_{\left(x,y,t\right)\in B_{n}}\frac{1}{\Sigma_{t}^{f}}\left|X_{t}^{f,\theta}\left(y\right)-X_{t_{K}}^{f,\theta}\left(y\right)-\left(X_{t}^{f,\theta}\left(x\right)-X_{t_{K}}^{f,\theta}\left(x\right)\right)\right|\\
 & \hfill\qquad+\sup_{\left(x,y,t\right)\in B_{n}}\frac{1}{\Sigma_{t_{K}}^{f}}\sum_{i=1}^{K}\left|X_{t_{i}}^{f,\theta}\left(y\right)-X_{t_{i-1}}^{f,\theta}\left(y\right)-\left(X_{t_{i}}^{f,\theta}\left(x\right)-X_{t_{i-1}}^{f,\theta}\left(x\right)\right)\right|\\
\leq & \sup_{\left(x,y,t\right)\in B_{n}}\frac{1}{\Sigma_{t}^{f}}\left|X_{t}^{f,\theta}\left(y\right)-X_{t_{K}}^{f,\theta}\left(y\right)-\left(X_{t}^{f,\theta}\left(x\right)-X_{t_{K}}^{f,\theta}\left(x\right)\right)\right|\\
 & \qquad\hfill+\sum_{j=J^{\prime}}^{J}\frac{1}{\Sigma_{t_{j}}^{f}}\left(\sum_{i=1}^{j}\,\sup_{\left|x-y\right|\leq2^{-\left(n+1\right)^{2}}2\sqrt{\nu}}\left|X_{t_{i}}^{f,\theta}\left(y\right)-X_{t_{i-1}}^{f,\theta}\left(y\right)-\left(X_{t_{i}}^{f,\theta}\left(x\right)-X_{t_{i-1}}^{f,\theta}\left(x\right)\right)\right|\right).
\end{split}
\label{eq:estimate of modulus of continuity for X/Sigma}
\end{equation}
To treat the first term on the right hand side in (\ref{eq:estimate of modulus of continuity for X/Sigma}),
notice that $f$ is continuous and does not change sign on $[t,t_{K}]$
and $\left|f\right|$ is non-decreasing on $[t,t_{K}]$. For every
$s\in[t,1]$, let $m_{n}^{\theta}\left(s\right)$ be as defined in
(\ref{eq:def of m^theta(t)}). Then $m_{n}^{\theta}:\left[t,1\right]\rightarrow\left(0,\infty\right)$
is a non-increasing function on $\left[t,1\right]$. Thus, 
\[
\begin{split} & \sup_{\left(x,y,t\right)\in B_{n}}\frac{1}{\Sigma_{t}^{f}}\left|X_{t}^{f,\theta}\left(y\right)-X_{t_{K}}^{f,\theta}\left(y\right)-\left(X_{t}^{f,\theta}\left(x\right)-X_{t_{K}}^{f,\theta}\left(x\right)\right)\right|\\
\leq & \sup_{\left(x,y,t\right)\in B_{n}}\frac{1}{\Sigma_{t}^{f}}\left|f\left(t\right)\left(\bar{\theta}_{t}\left(y\right)-\bar{\theta}_{t}\left(x\right)\right)-f\left(t_{K}\right)\left(\bar{\theta}_{t_{K}}\left(y\right)-\bar{\theta}_{t_{K}}\left(x\right)\right)-\int_{t_{K}}^{t}\left(\bar{\theta}_{s}\left(y\right)-\bar{\theta}_{s}\left(x\right)\right)df\left(s\right)\right|\\
\leq & \sup_{2^{-n^{2}}\leq t\leq2^{-\left(n-1\right)^{2}}}\frac{1}{\Sigma_{t}^{f}}\left[\left|f\left(t\right)\right|m_{n}^{\theta}\left(t\right)+\left|f\left(t_{K}\right)\right|m_{n}^{\theta}\left(t_{K}\right)-\int_{t_{K}}^{t}m_{n}^{\theta}\left(s\right)d\left|f\left(s\right)\right|\right]\\
\leq & \sup_{2^{-n^{2}}\leq t\leq2^{-\left(n-1\right)^{2}}}\frac{1}{\Sigma_{t}^{f}}\left[\int_{t_{K}}^{t}\left|f\left(s\right)\right|dm_{n}^{\theta}\left(s\right)+2\left|f\left(t_{K}\right)\right|m_{n}^{\theta}\left(t_{K}\right)\right]\\
\leq & \sup_{2^{-n^{2}}\leq t\leq2^{-\left(n-1\right)^{2}}}\left[\int_{t_{K}}^{t}\frac{\left|f\left(s\right)\right|}{\Sigma_{s}^{f}}dm_{n}^{\theta}\left(s\right)+2\frac{\left|f\left(t_{K}\right)\right|}{\Sigma_{t_{K}}^{f}}m_{n}^{\theta}\left(t_{K}\right)\right]\\
\leq & \sum_{j=J^{\prime}+1}^{J}\int_{t_{j-1}}^{t_{j}}\frac{\left|f\left(s\right)\right|}{\Sigma_{s}^{f}}dm_{n}^{\theta}\left(s\right)+\int_{t_{J}}^{2^{-n^{2}}}\frac{\left|f\left(s\right)\right|}{\Sigma_{s}^{f}}dm_{n}^{\theta}\left(s\right)+2\sum_{j=J^{\prime}}^{J}\frac{\left|f\left(t_{j}\right)\right|}{\Sigma_{t_{j}}^{f}}m_{n}^{\theta}\left(t_{j}\right)
\end{split}
\]
Again, for each integral above, $m_{n}^{\theta}$ is non-increasing
and $\left|f\left(s\right)\right|/\Sigma_{s}^{f}$ is non-decreasing
in $s$ within the relevant region. So, taking expectation and using
(\ref{eq:an estimate on expectation of max diff up to t}) yields
that for $j=J^{\prime}+1,\cdots,J$,
\[
\begin{split} & \mathbb{E}^{\mathcal{W}}\left[\int_{t_{j-1}}^{t_{j}}\frac{\left|f\left(s\right)\right|}{\Sigma_{s}^{f}}dm_{n}^{\theta}\left(s\right)\right]\\
\leq & \frac{\left|f\left(t_{j}\right)\right|}{\Sigma_{t_{j}}^{f}}\mathbb{E}^{\mathcal{W}}\left[m_{n}^{\theta}\left(t_{j}\right)\right]+\frac{\left|f\left(t_{j-1}\right)\right|}{\Sigma_{t_{j-1}}^{f}}\mathbb{E}^{\mathcal{W}}\left[m_{n}^{\theta}\left(t_{j-1}\right)\right]-\int_{t_{j-1}}^{t_{j}}\mathbb{E}^{\mathcal{W}}\left[m_{n}^{\theta}\left(s\right)\right]d\frac{\left|f\left(s\right)\right|}{\Sigma_{s}^{f}}\\
\leq & Cn2^{-n/2}\left[\frac{\left|f\left(t_{j}\right)\right|}{\Sigma_{t_{j}}^{f}}\sqrt{G\left(t_{j}\right)}+\frac{\left|f\left(t_{j-1}\right)\right|}{\Sigma_{t_{j-1}}^{f}}\sqrt{G\left(t_{j-1}\right)}-\int_{t_{j-1}}^{t_{j}}\sqrt{G\left(s\right)}d\frac{\left|f\left(s\right)\right|}{\Sigma_{s}^{f}}\right]\\
\leq & Cn2^{-n/2}\left[2\frac{\left|f\left(t_{j-1}\right)\right|}{\Sigma_{t_{j-1}}^{f}}\sqrt{G\left(t_{j-1}\right)}+\int_{t_{j-1}}^{t_{j}}\frac{\left|f\left(s\right)\right|}{\Sigma_{s}^{f}}\frac{1}{2\sqrt{G\left(s\right)}}dG\left(s\right)\right],
\end{split}
\]
and similarly, 
\[
\begin{split}\mathbb{E}^{\mathcal{W}}\left[\int_{t_{J}}^{2^{-n^{2}}}\frac{\left|f\left(s\right)\right|}{\Sigma_{s}^{f}}dm_{n}^{\theta}\left(s\right)\right] & \leq Cn2^{-\frac{n}{2}}\left[2\frac{\left|f\left(t_{J}\right)\right|}{\Sigma_{t_{J}}^{f}}\sqrt{G\left(t_{J}\right)}+\int_{t_{J}}^{2^{-n^{2}}}\frac{\left|f\left(s\right)\right|}{\Sigma_{s}^{f}}\frac{1}{2\sqrt{G\left(s\right)}}dG\left(s\right)\right].\end{split}
\]
Therefore, 
\[
\begin{split} & \mathbb{E}^{\mathcal{W}}\left[\sum_{j=J^{\prime}+1}^{J}\int_{t_{j-1}}^{t_{j}}\frac{\left|f\left(s\right)\right|}{\Sigma_{s}^{f}}dm_{n}^{\theta}\left(s\right)+\int_{t_{J}}^{2^{-n^{2}}}\frac{\left|f\left(s\right)\right|}{\Sigma_{s}^{f}}dm_{n}^{\theta}\left(s\right)+2\sum_{j=J^{\prime}}^{J}\frac{\left|f\left(t_{j}\right)\right|}{\Sigma_{t_{j}}^{f}}m_{n}^{\theta}\left(t_{j}\right)\right]\\
\leq & Cn2^{-\frac{n}{2}}\left[2\sum_{j=J^{\prime}+1}^{J+1}\frac{\left|f\left(t_{j-1}\right)\right|}{\Sigma_{t_{j-1}}^{f}}\sqrt{G\left(t_{j-1}\right)}+\int_{t_{J^{\prime}}}^{2^{-n^{2}}}\frac{\left|f\left(s\right)\right|}{\Sigma_{s}^{f}}\frac{1}{2\sqrt{G\left(s\right)}}dG\left(s\right)+2\sum_{j=J^{\prime}}^{J}\frac{\left|f\left(t_{j}\right)\right|}{\Sigma_{t_{j}}^{f}}\sqrt{G\left(t_{j}\right)}\right]\\
\leq & C_{f}n2^{-\frac{n}{2}}\left[J\left(-\ln t_{J}\right)^{\rho_{f}}+\sqrt{\int_{t_{J^{\prime}}}^{2^{-n^{2}}}\frac{f^{2}\left(s\right)}{\left(\Sigma_{s}^{f}\right)^{2}}dG\left(s\right)}\cdot\sqrt{\int_{t_{J^{\prime}}}^{2^{-n^{2}}}\frac{dG\left(s\right)}{G\left(s\right)}}\right]\\
\leq & C_{f}n2^{-\frac{n}{2}}\left[J\left(-\ln t_{J}\right)^{\rho_{f}}+\sqrt{\Sigma_{2^{-n^{2}}}^{f}}\sqrt{\ln G\left(2^{-n^{2}}\right)}\right]\leq C_{f}n^{3+4\rho_{f}}2^{-\frac{n}{2}},
\end{split}
\]
where we used the conditions \textbf{(a)(b)(c)} imposed on $f\in\mathcal{C}$
as in Definition \ref{def:class C } as well as the simple observation
that
\[
\Sigma_{2^{-n^{2}}}^{f}=\int_{1}^{2^{-n^{2}}}f^{2}\left(s\right)dG\left(s\right)\leq C_{f}\int_{1}^{2^{-n^{2}}}\left(-\ln s\right)^{2\rho_{f}}\frac{dG\left(s\right)}{G\left(s\right)}\leq C_{f}n^{4\rho_{f}+2}.
\]

As for the second term on the right hand side of (\ref{eq:estimate of modulus of continuity for X/Sigma}),
by a similar argument, for each $i=1,\cdots,J$, 
\[
\begin{split} & \mathbb{E}^{\mathcal{W}}\left[\sup_{\left|x-y\right|\leq2^{-\left(n+1\right)^{2}}2\sqrt{\nu}}\,\left|X_{t_{i}}^{f,\theta}\left(y\right)-X_{t_{i-1}}^{f,\theta}\left(y\right)-\left(X_{t_{i}}^{f,\theta}\left(x\right)-X_{t_{i-1}}^{f,\theta}\left(x\right)\right)\right|\right]\\
\leq & \left|f\left(t_{i}\right)\right|\mathbb{E}^{\mathcal{W}}\left[m_{n}^{\theta}\left(t_{i}\right)\right]+\left|f\left(t_{i-1}\right)\right|\mathbb{E}^{\mathcal{W}}\left[m_{n}^{\theta}\left(t_{i-1}\right)\right]-\int_{t_{i-1}}^{t_{i}}\mathbb{E}^{\mathcal{W}}\left[m_{n}^{\theta}\left(s\right)\right]d\left|f\left(s\right)\right|\\
\leq & Cn2^{-\frac{n}{2}}\left[2\left|f\left(t_{i-1}\right)\right|\sqrt{G\left(t_{i-1}\right)}+\int_{t_{i-1}}^{t_{i}}\frac{\left|f\left(s\right)\right|}{2\sqrt{G\left(s\right)}}dG\left(s\right)\right]\\
\leq & Cn2^{-\frac{n}{2}}\left[2\left|f\left(t_{i-1}\right)\right|\sqrt{G\left(t_{i-1}\right)}+\frac{1}{2}\sqrt{\Sigma_{t_{i}}^{f}-\Sigma_{t_{i-1}}^{f}}\cdot\sqrt{\ln G\left(t_{i}\right)-\ln G\left(t_{i-1}\right)}\right],
\end{split}
\]
and hence, 
\[
\begin{split} & \sum_{j=J^{\prime}}^{J}\frac{1}{\Sigma_{t_{j}}^{f}}\sum_{i=1}^{j}\mathbb{E}^{\mathcal{W}}\left[\sup_{\left|x-y\right|\leq2^{-\left(n+1\right)^{2}}2\sqrt{\nu}}\left|X_{t_{i}}^{f,\theta}\left(y\right)-X_{t_{i-1}}^{f,\theta}\left(y\right)-\left(X_{t_{i}}^{f,\theta}\left(x\right)-X_{t_{i-1}}^{f,\theta}\left(x\right)\right)\right|\right]\\
\leq & C_{f}n2^{-\frac{n}{2}}\sum_{j=J^{\prime}}^{J}\frac{1}{\Sigma_{t_{j}}^{f}}\left(j\left(-\ln t_{j-1}\right)^{\rho_{f}}+\sqrt{\Sigma_{t_{j}}^{f}\cdot\ln G\left(t_{j}\right)}\right)\leq C_{f}n^{3+6\rho_{f}}2^{-\frac{n}{2}}.
\end{split}
\]
This completes the proof of Lemma \ref{lem:modulus of X}.
\end{proof}
\newpage{}

\bibliographystyle{plain}
\bibliography{mybib}

\begin{thebibliography}{10}

\bibitem{AT07}
R.~Adler and J.~Taylor.
\newblock {\em Random fields and geometry}.
\newblock Springer Monographs in Mathematics. Springer, New York, 2007.

\bibitem{BM}
J.~Barral and B.~Mandelbrot.
\newblock Non-degeneracy, moments, dimension,and multifractal analysis for
  random multiplicative measures (random multiplicative multifractal measures,
  part ii.
\newblock {\em Fractal geometry and applications: a jubilee of Beno\^{i}t
  Mandelbrot, Part 2. Proc. Symp. Pure Math., AMS}, 72:17--52, 2004.

\bibitem{JJRV}
J.~Barral, R.~Rhodes X.~Jin, and V.~Vargas.
\newblock Gaussian multiplicative chaos and kpz duality.
\newblock {\em Commun. Math. Phys.}, 323(2):451--485, 2013.
\newblock DOI:10.1007/s00220-013-1769-z.

\bibitem{ChatterjeeDemboDing}
S.~Chatterjee, A.~Dembo, and J.~Ding.
\newblock On level sets of gaussian fields.
\newblock {\em Preprint}, pages 1--6, 2013.
\newblock arXiv:1310.5175v1.

\bibitem{Chen_thick_point}
L.~Chen.
\newblock Thick points of high-dimensional gaussian free fields.
\newblock pages 1--34, 2015.
\newblock to appear in Ann. Inst. Henri Poincar\'e.

\bibitem{CJ}
L.~Chen and D.~Jakobson.
\newblock Gaussian free fields and kpz relation in $\mathbb{R}^4$.
\newblock {\em Annales Henri Poincar\'e}, 15(7):1245--1283, 2014.

\bibitem{add_Gaus}
L.~Chen and D.~Stroock.
\newblock Additive functions and gaussian measures.
\newblock {\em Prokhorov and Contemporary Probability Theory}, 2013.
\newblock Springer Proceedings in Mathematics and Statistics 33.

\bibitem{CiprianiHazra13}
A.~Cipriani and R.~S. Hazra.
\newblock Thick points for a gaussian free fields in 4 dimensions.
\newblock {\em Stochastic Processes and their Applications}, 125(6):2383--2404,
  2015.
\newblock DOI:10.1016/j.spa.2015.01.004.

\bibitem{DPRZ}
A.~Dembo, Y.~Peres, J.~Rosen, and O.~Zeitouni.
\newblock Thick points for spatial brownian motion: multifractal analysis of
  occupation measure.
\newblock {\em Annals of Probability}, 28(1):1--35, 2000.
\newblock DOI:10.1214/aop/1019160110.

\bibitem{DingRoyZeitouni}
J.~Ding, R.~Roy, and O.~Zeitouni.
\newblock Convergence of the centered maximum of log-correlated gaussian
  fields.
\newblock {\em to appear in Ann. Probab.}, 2015.
\newblock arXiv:1503.04588v1.

\bibitem{DingZeitouni}
J.~Ding and O.~Zeitouni.
\newblock Extreme values for two-dimensional discrete gaussian free field.
\newblock {\em Annals of Probability}, 42(4):1480--1515, 2014.
\newblock DOI:10.1214/13-AOP859.

\bibitem{Dudley}
R.~Dudley.
\newblock Sample functions of the gaussian process.
\newblock {\em Annals of Probability}, 1:66--103, 1973.

\bibitem{DSRV14}
B.~Duplantier, R.~Rhodes, S.~Sheffield, and V.~Vargas.
\newblock Renormalization of critical gaussian multiplicative chaos and kpz
  relation.
\newblock {\em Communications in Mathematical Physics}, 330(1):283--330, 2014.
\newblock DOI:10.1007/s00220-014-2000-6.

\bibitem{DS1}
B.~Duplantier and S.~Sheffield.
\newblock Liouville quantum gravity and kpz.
\newblock {\em Invert. Math.}, 185(2):333--393, 2011.
\newblock DOI:10.1007/s00220-010-0308-1.

\bibitem{aws}
L.~Gross.
\newblock Abstract wiener spaces.
\newblock {\em Proc. Fifth Berkeley Sympos. Math. Stat. and Probabability
  (Berkeley, Calif.)}, 2(1):31--42, 1965.

\bibitem{HMP}
X.~Hu, J.~Miller, and Y.~Peres.
\newblock Thick points of the gaussian free field.
\newblock {\em Annals of Probability}, 38(2):896--926, 2010.
\newblock DOI:10.1214/09-AOP498.

\bibitem{Kah}
J.-P. Kahane.
\newblock Random series of functions, 2nd edition.
\newblock {\em Cambridge Studies in Advanced Mathematics 5, Cambridge Univ.
  Press}, 1985.

\bibitem{Madaule}
T.~Madaule.
\newblock Maximum of a log-correlated gaussian field.
\newblock {\em Annales de l'Institut Henri Poincar\'e}, 51(4):1369--1431, 2015.
\newblock DOI:10.1214/14-AIHP633.

\bibitem{RV}
R.~Rhodes and V.~Vargas.
\newblock Multidimensional multifractal random measures.
\newblock {\em Electronic Journal of Probability}, 15(9):241--258, 2010.
\newblock DOI:10.1214/EJP.v15-746.

\bibitem{RV11}
R.~Rhodes and V.~Vargas.
\newblock Kpz formula for log-infinitely divisible multifractal random
  measures.
\newblock {\em ESAIM Probability and Statistics}, 15:358--371, 2011.
\newblock DOI:10.1051/ps/201007.

\bibitem{RV13}
R.~Rhodes and V.~Vargas.
\newblock Gaussian multiplicative chaos and applications: a review.
\newblock {\em Probab. Surv.}, 11:315--392, 2014.
\newblock DIU:10.1214/13-PS218.

\bibitem{Shef}
S.~Sheffield.
\newblock Gaussian free fields for mathematicians.
\newblock {\em Probability Theory Related Fields}, 139(3-4):521--541, 2007.
\newblock DOI:10.1007/s00440-006-0050-1.

\bibitem{awsrevisited}
D.~Stroock.
\newblock Abstract wiener space, revisited.
\newblock {\em Comm. on Stoch. Anal.}, 2(1):145--151, 2008.

\bibitem{probability}
D.~Stroock.
\newblock Probability, an analytic view, 2nd edition.
\newblock {\em Cambridge Univ. Press}, 2011.

\bibitem{Talagrand}
M.~Talagrand.
\newblock Majorizing measures: The generic chaining.
\newblock {\em Annals of Probability}, 24:1049--1103, 1996.
\newblock DOI:10.1214/aop/1065725175.

\end{thebibliography}

\end{document}